\theoremstyle{definition}
\newtheorem{counter}{bad bad bad}[section]
\newtheorem{lemma}[counter]{Lemma}
\newtheorem{definition}[counter]{Definition}
\newtheorem{notation}[counter]{Notation}
\newtheorem{remark}[counter]{Remark}
\newtheorem{proposition}[counter]{Proposition}
\newtheorem{fact}[counter]{Fact}
\newtheorem{corollary}[counter]{Corollary}
\newtheorem{hypothesis}[counter]{Hypothesis}
\newtheorem{question}[counter]{Question}
\newtheorem{example}[counter]{Example}
\newenvironment{customthm}[1]
  {\innercustomthm}
  {\endinnercustomthm}
\newcommand{\K}{\mathbf{K}}
\newcommand{\gS}{\operatorname{gS}}
\newcommand{\LS}{\operatorname{LS}}
\newcommand{\gtp}{\mathbf{gtp}}
\newcommand{\lek}{\leq_{\K}}
\newcommand{\underhlim}{\leq^{(\lambda, \geq \kappa)}_{\K}}
\newcommand{\cof}{\text{cf}}
\newcommand{\calt}{\mathcal{T}}
\newcommand{\calc}{\mathcal{C}}
\newcommand{\lesst}{\vartriangleleft}
\newcommand{\lesstrong}{\lesst_{(\lambda, \geq \kappa)}}
\newcommand{\kkappalims}{K_{(\lambda, \geq \kappa)}}
\newcommand{\Kkappalims}{\K_{(\lambda, \geq \kappa)}}
\newbox\noforkbox \newdimen\forklinewidth
\noforkbox\hbox{\lower 2pt\box1\lower 2pt\box0\relax}
\def\unionstick{\mathop{\copy\noforkbox}\limits}
\def\nonfork_#1{\unionstick_{\textstyle #1}}
\newbox\doesforkbox
\doesforkbox\hbox{\lower 2pt\box1 \lower 2pt\box2\lower2pt\box0\relax}
\def\nunionstick{\mathop{\copy\doesforkbox}\limits}
\def\fork_#1{\nunionstick_{\textstyle #1}}
\newcommand{\dnf}{\unionstick}
\newcommand{\nf}{\unionstick}
\newcommand{\dnfb}[4]{#2 \overset{#4}{\underset{#1}{\overline{\nf}}} #3}
\title{Disjoint non-forking amalgamation in stable AECs}
\author{Jeremy Beard}
\begin{document}

\maketitle

\begin{abstract}
	The \emph{disjoint amalgamation property} (DAP), which asserts that all spans of a class of models can be amalgamated with minimal intersection, is an important property in the context of abstract elementary classes, with connections to both Grossberg's question \cite[Problem (5)]{sh576} and Shelah's categoricity conjecture \cite[Question 6.14(3)]{sh702}. We prove that, in a nice AEC $\K$ stable in $\lambda \geq \LS(\K)$ with a strong enough independence relation, all high cofinality $\lambda$-limit models are disjoint (non-forking) amalgamation bases.
	
	\begin{customthm}{\ref{disjoint-nf-amalgamation}}
		Let $\K$ be an AEC stable in $\lambda$, where $\K_\lambda$ has AP, JEP, and NMM, and let $\K'$ be some AC where $\Kkappalims \subseteq \K' \subseteq \K_\lambda$. Let $\dnf$ be an independence relation on $\K'$ satisfying uniqueness, existence, non-forking amalgamation, $\Kkappalims$-universal continuity* in $\K_\lambda$, and $(\geq \kappa)$-local character.
		
		Assume $M_0, M_1, M_2 \in \Kkappalims$, and that $M_0 \lek M_l$ and $a_l \in M_l$ for $l = 1, 2$. Then there exist $N \in \Kkappalims$ and $f_l : M_l \rightarrow N$ fixing $M_0$ for $l = 1, 2$ such that $\gtp(f_l(a_l)/f_{3-l}[M_{3-l}], N)$ $\dnf$-does not fork over $M_0$ and $f_1[M_1] \cap f_2[M_2] = M_0$. That is, $\dnf \upharpoonright \Kkappalims$ has disjoint non-forking amalgamation.
		
		In particular, every $M_0 \in \Kkappalims$ is a disjoint amalgamation base in $\K_\lambda$.
	\end{customthm}
	
	The hypotheses on the independence relation can be weakened (closer to $\lambda$-non-splitting in $\lambda$-stable AECs) if we are willing to give up the `non-forking' conditions of the amalgamation.
	
	This generalises \cite[Lemma 5.36]{vas19}, which applied only to categorical AECs, and more broadly continues the current effort to explore the structure of strictly stable abstract elementary classes.

\end{abstract}

\section{Introduction}

In the context of abstract elementary classes (AECs), the disjoint amalgamation property (DAP) is an important notion with connections to some of the central open problems in AECs, such as Grossberg's question \cite[Problem (5)]{sh576} (e.g. \cite[Lemma 2.4.10]{moab}) and Shelah's categoricity conjecture \cite[Question 6.14(3)]{sh702} (e.g. \cite[Fact 6.2]{vas19}). Given an AEC (or more generally an abstract class) $\K$, we say $M \in \K$ is a \emph{disjoint amalgamation base in $\K$} if for any two $\K$-extensions $M_1, M_2$ of $M$ where $M_1$ and $M_2$ have the same cardinality as $M$, there exist $\K$-embeddings $f_l : M_l \rightarrow N$ fixing $M$ such that $f_1[M_1] \cap f_2[M_2] = M$; that is, the intersection is the minimal possible one. We say a subclass $\K' \subseteq \K$ has \emph{disjoint amalgamation in $\K$} or \emph{DAP in $\K$} if every $M \in \K'$ is a disjoint amalgamation base in $\K$. In the case $\K = \K'$, we simply say $\K$ has \emph{disjoint amalgamation} or \emph{DAP}.

Given a first order complete theory $T$, the class of models of $T$ with elementary embeddings has DAP by an application of Robinson's Consistency Lemma \cite[Corollary 4.2.10]{moab}. However, in the more general setting of AECs, it is possible for DAP to fail, even assuming the amalgamation property (see Example \ref{example-DAP-stronger-than-AP}). In this paper, we show that if a nice AEC $\K$ which is stable in some $\lambda \geq \LS(\K)$ has a well behaved independence relation $\dnf$ on the high cofinality $\lambda$-limit models (see Definition \ref{universal-and-limit-definitions}), then not only are all high cofinality $\lambda$-limit models disjoint amalgamation bases in $\K$, but we can perform the disjoint amalgamation in a way that respects the non-forking relation on chosen singletons.

\begin{customthm}{\ref{disjoint-nf-amalgamation}}
	Let $\K$ be an AEC stable in $\lambda$, where $\K_\lambda$ has the amalgamation property, joint embeddding property, and no maximal models, and let $\K'$ be some AC where $\Kkappalims \subseteq \K' \subseteq \K_\lambda$. Let $\dnf$ be an independence relation on $\K'$ satisfying uniqueness, existence, non-forking amalgamation, $\Kkappalims$-universal continuity* in $\K_\lambda$, and $(\geq \kappa)$-local character.
	
	Assume $M_0, M_1, M_2 \in \Kkappalims$, and that $M_0 \lek M_l$ and $a_l \in M_l$ for $l = 1, 2$. Then there exist $N \in \Kkappalims$ and $f_l : M_l \rightarrow N$ fixing $M_0$ for $l = 1, 2$ such that $\gtp(f_l(a_l)/f_{3-l}[M_{3-l}], N)$ $\dnf$-does not fork over $M_0$ and $f_1[M_1] \cap f_2[M_2] = M_0$. That is, $\dnf \upharpoonright \Kkappalims$ satisfies disjoint non-forking amalgamation.
	
	In particular, every $M_0 \in \Kkappalims$ is a disjoint amalgamation base in $\K_\lambda$.
\end{customthm}

\begin{figure}[!ht]
\centering
\begin{circuitikz}
\tikzstyle{every node}=[font=\normalsize]
\node [font=\normalsize] at (5.25,12) {$a_1 \in$};
\node [font=\normalsize] at (8,9.2) {$a_2$};
\node [font=\normalsize, rotate=90] at (8,9.6) {$\in$};
\node [font=\normalsize] at (6,10) {$M$};
\draw [->, >=Stealth] (6,10.25) -- (6,11.75);
\draw [->, >=Stealth] (6.25,10) -- (7.75,10);
\node [font=\normalsize] at (8,10) {$M_2$};
\node [font=\normalsize] at (6,12) {$M_1$};
\node [font=\normalsize, color=blue] at (8.3,11) {$f_2$};
\node [font=\normalsize, color=blue] at (7,12.3) {$f_1$};
\draw [->, >=Stealth, dashed, color=blue] (6.25,12) -- (7.75,12);
\draw [->, >=Stealth, dashed, color=blue] (8,10.25) -- (8,11.75);
\node [font=\normalsize, color=blue] at (8,12) {$N$};
\node [font=\normalsize, color=blue] at (11,12) {$f_1(a_1) \dnf_{M}^N f_2[M_2]$};
\node [font=\normalsize, color=blue] at (11,10.8) {$f_2(a_2) \dnf_{M}^N f_1[M_1]$};
\node [font=\normalsize, color=blue] at (11,9.8) {$f_1[M_1] \cap f_2[M_2] = M$};
\end{circuitikz}

\caption{}
\label{disjoint-non-forking-amalgamation-diagram}
\end{figure}

In fact, the joint embedding property and no maximal models assumptions are unnecessary (see Remark \ref{remark-JEP-NMM-not-needed}).

The argument hinges on the theory of towers developed in \cite[\textsection 3]{bema}. Towers are essentially sequences of high cofinality limit models with named elements (see Definition \ref{tower-definitions}). An ordering $\lesst$ on towers can be defined (see \ref{tower-ordering-definintion}), which demands the singletons of towers have nice non-forking properties over the lesser towers. Of particular importance to us are \emph{reduced towers} - these are towers which have minimal intersections with the models $\lesst$-extending towers. 

We approach the proof of Theorem \ref{disjoint-nf-amalgamation} in stages, which we sketch here. In Subsection \ref{tower-amalgamation} we show that pairs of towers with the same bottom model can themselves be amalgamated - that is, you can form a matrix of models where the leftmost column and bottom row are isomorphic to two given towers, and the towers' singletons have nice non-forking properties with the matrix. In Subsection \ref{building-a-reduced-tower-between-m0-m1} we show how to build a reduced tower where the top model is a high cofinality limit model over the bottom model. We put these together in Subsection \ref{main-result-and-applications}: using that high cofinality limit models are isomorphic (Fact \ref{bemain}), we can ensure that (using the notation from the statement of Theorem \ref{disjoint-nf-amalgamation}) the bottom model of the tower is $M_0$, the top model is $M_1$, and $a_1$ is the bottom singleton of the tower. By applying tower amalgamation to this constructed tower and the tower $\langle M_0, M_2 \rangle ^\wedge \langle a_2 \rangle$, we get the desired non-forking amalgamation (the non-forking comes from the way we amalgamated the towers in the first step). Further, the `minimal intersection' condition of the reduced tower guarantees disjointness.

This approach is loosely inspired by \cite[Lemma 5.36]{vas19}, where this disjoint `non-forking' amalgamation is shown to hold for all models in an AEC which is categorical in $\lambda \geq \LS(\K)$ as well as being $\lambda$-superstable and $\lambda$-symmetric, and used a specific independence relation called $\lambda$-non-forking. Theorem \ref{disjoint-nf-amalgamation} has the advantages of applying to strictly stable AECs, as well as using a general independence notion rather than $\lambda$-non-forking. However, without the categoricity assumption, we can only prove that (high cofinality) $\lambda$-limit models are disjoint (non-forking) amalgamation bases, rather than all models of size $\lambda$.

In Subsection \ref{disjoint-amalgamation-from-a-weaker-relation}, we show that in a more general setting (where our independence relation satisfies only `weak' versions of uniqueness, extension, and non-forking amalgamation, similar to $\lambda$-non-splitting in $\lambda$-stable AECs), all high cofinality $\lambda$-limit models are disjoint amalgamation bases. However, we have not been able to show the non-forking of the amalgamation can be guaranteed. This is because, in this context, we must use the notion of towers from \cite{beard3} (in this paper called \emph{weak towers}). These interact more fruitfully with the weaker forms of uniqueness, extension, and non-forking amalgamation our independence relation assumes, but are trickier to work with. It remains open whether an analogous disjoint \emph{non-forking} amalgamation can be performed in this setting (see Question \ref{weak-question}).

This paper is split into three sections. Section 2 provides the necessary background. In particular, the theory of towers is introduced - we only state the relevant definitions and results, but an interested reader can consult \cite[\textsection 3]{bema} for a more thorough introduction. Section 3 contains all new arguments - in particular, Subsections 3.1, 3.2, and 3.3 together prove Theorem \ref{disjoint-nf-amalgamation}. Subsection 3.4 introduces the theory of towers from \cite{beard3} which we call \emph{weak towers} to distinguish from those used in the rest of the paper, before proving the disjoint amalgamation result in this weaker setting.

This paper was written while the author was working on a Ph.D. thesis under the direction of Rami Grossberg at Carnegie Mellon University, and the author would like to thank Professor Grossberg for his guidance and assistance in his research in general and in this work specifically.

\section{Preliminaries}

We will extensively use material from \cite[\textsection 3]{bema}, particularly the theory of reduced and full towers, and the main result of that section (\cite[Theorem 3.1]{bema}). We present the relevant material in this section for the convenience of the reader. A reader familiar with \cite[\textsection 3]{bema} can skip this section.

For now, we assume some background knowledge of abstract elementary classes (AECs) - see for example \cite{baldwinbook}.

\subsection{Basic notions, notations, and conventions}

An \emph{abstract class} (AC) is a pair $\K = (K, \lek)$ class of models $K$ closed under isomorphisms, with a partial order $\lek$ which strengthens the substructure relation $\subseteq$ and respects isomorphisms. Often $\K$ will be an AEC. Given a model $M$, $|M|$ denotes its universe, and $\|M\|$ its cardinality. We use the standard abuses of notation $M \in \K$ instead of $M \in K$, and sometimes use $M$ in place of the universe $|M|$, e.g. $a \in M$. $M \cong N$ signals that the models $M$ and $N$ are isomorphic, and $M \underset{M_0}{\cong} N$ denotes that there is an isomorphism from $M$ to $N$ fixing the common $\lek$-substructure $M_0$. We use $\alpha, \beta, \gamma \dots$ to denote ordinals, often reserving $\delta$ for limits, $\lambda$ for cardinals, and $\kappa$ for a regular cardinals. Given two sequences $\sigma$ and $\tau$, their concatenation is $\sigma ^\wedge \tau$. Given an AC $\K = (K, \lek)$, we say $\K' = (K', \leq_{\K'})$ is a sub-AC of $\K$ if $\K'$ is an AC, $K' \subseteq K$, and $\leq_{\K'} = \lek \upharpoonright (K')^2$. When $\K$ is an AC and $\lambda$ is a cardinal, $\K_\lambda$ is the sub-AC of $\K$ with underlying class $K_\lambda = \{M \in K : \|M\| = \lambda\}$ ordered by the restriction of $\lek$ to $K_\lambda$. $K_{\geq \lambda}$ and $\K_{\geq \lambda}$ are defined similarly. Given $M \lek N$ and $a \in N$, $\gtp(a/M, N)$ denotes the Galois type of $a$ over $M$ in $N$. The set of (1-ary) Galois types over $M$ is denoted $\gS(M)$. We say $\K$ is $\lambda$-stable if for all $M \in \K$ with $\|M\| = \lambda$, $|\gS(M)| \leq \lambda$.

An AC $\K$ has the \emph{amalgamation property} (AP) if for all $M_0, M_1, M_2$ where $M_0 \lek M_l$ for $l = 1, 2$, there exist $N$ and $\K$-embeddings $f_l : M_l \rightarrow N$ fixing $M_0$ for $l = 1, 2$. $\K$ has the \emph{joint embedding property} (JEP) if every two models $M_1$ and $M_2$ have $\K$-embeddings into a common structure $N$. $\K$ has \emph{no maximal models} (NMM) if for all $M \in \K$, there exists $N \in \K$ where $M <_\K N$.

\begin{definition}\label{disjoint-ap-base}
    Let $\K$ be an AC. We say that $M \in \K$ is a \emph{disjoint amalgamation base in $\K$} if for every $M_1, M_2 \in \K_{\|M\|}$ such that $M \lek M_l$ for $l = 1, 2$, there exist $N$ and $f_l:M_l \rightarrow N$ fixing $M$ for $l = 1, 2$ such that $f_1[M_1] \cap f_2[M_2] = M$.
    
    If $\K$ is clear from context, we may omit it. If every $M \in \K$ is a disjoint amalgamation base, we say $\K$ satisfies the \emph{disjoint amalgamation property} or \emph{DAP}.
\end{definition}

Compare this to the definition of the amalgamation property - disjoint amalgamation requires the extra `minimal intersection' condition, but note disjoint amalgamation we only ask that our conclusion holds for models of the same cardinality. Other authors may not enforce the cardinality condition, but since in this paper we will only discuss AP and DAP holding in $\K_\lambda$ for an AC $\K$ and some cardinal $\lambda$, the notions will be the same for our main results. In light of this, DAP can be viewed as a stronger condition than AP.

In first order model theory, AP and DAP are actually equivalent by an application of Robinson's consistency lemma \cite[Corollary 4.2.10]{moab}. However, in the AEC setting DAP is strictly stronger than AP, since there exists a very nice AEC satisfying AP (and JEP, NMM, and stability in all cardinals) which does not satisfy the DAP in any cardinal (see Example \ref{example-DAP-stronger-than-AP}).

DAP has a couple of alternative formulations.

\begin{lemma}\label{disjoint-ap-non-fixing-def}
	Let $\K$ be an AC, and $M \in \K$. Then $M$ is a disjoint amalgamation base if and only if for every $M_1, M_2 \in \K_{\|M\|}$ such that $M \lek M_l$ for $l = 1, 2$, there exist $N$ and $f_l:M_l \rightarrow N$ for $l = 1, 2$ such that $f_1[M_1] \cap f_2[M_2] = f[M]$.
\end{lemma}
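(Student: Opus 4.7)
The plan is to prove both directions of the biconditional. The forward direction falls out of the definition; the reverse direction is a short transport across an isomorphism, using only the closure of $\K$ under isomorphisms.

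For the forward direction, assume $M$ is a disjoint amalgamation base and let $M_1, M_2 \in \K_{\|M\|}$ extend $M$. Take $N$ and $\K$-embeddings $f_l : M_l \to N$ fixing $M$ with $f_1[M_1] \cap f_2[M_2] = M$. Then $f_1 \upharpoonright M = f_2 \upharpoonright M = \mathrm{id}_M$, so interpreting the common restriction $f$ in the weaker statement as $\mathrm{id}_M$ gives $f[M] = M$, which is precisely the intersection.

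For the reverse direction, suppose we have $N$ and $f_l : M_l \to N$ with common restriction $f := f_1 \upharpoonright M = f_2 \upharpoonright M$ satisfying $f_1[M_1] \cap f_2[M_2] = f[M]$. First I would observe that $f[M] = f_1[M] \lek f_1[M_1] \lek N$, so $f^{-1} : f[M] \to M$ is an isomorphism between a $\K$-substructure of $N$ and the $\K$-model $M$. I would then extend $f^{-1}$ to an isomorphism $g : N \to N'$ with $M \lek N'$, by transporting the structure of $N$ across any bijection extending $f^{-1}$ (using disjoint renaming if necessary so the domains match up): closure of $\K$ under isomorphisms makes $N' \in \K$, and $f[M] \lek N$ transports to $M \lek N'$. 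The composites $g \circ f_l : M_l \to N'$ fix $M$ because $(g \circ f_l) \upharpoonright M = g \circ f = \mathrm{id}_M$, and
\[
(g \circ f_1)[M_1] \cap (g \circ f_2)[M_2] = g\bigl[f_1[M_1] \cap f_2[M_2]\bigr] = g[f[M]] = M,
\]
so $M$ is a disjoint amalgamation base.

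The only real obstacle is bookkeeping in the transport step -- making sure that extending $f^{-1}$ to an isomorphism of $N$ with some $N' \supseteq M$ is legitimate in an AC -- but this is immediate from the axiom that $\K$ is closed under isomorphisms of its members. No further model-theoretic content is needed.
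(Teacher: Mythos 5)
Your proof is correct and is essentially the paper's argument: both directions are handled the same way, with the reverse direction done by extending an isomorphism and transporting the amalgam across it using closure of $\K$ under isomorphisms (the paper extends $f_1$ to an isomorphism $f_1'\colon M_1'\cong N$ and pulls back, whereas you extend $f^{-1}$ to push $N$ forward onto a copy $N'$ containing $M$ — a mirror image of the same renaming trick). Note that, like the paper, you are implicitly reading the undefined $f$ in the statement as the common restriction $f_1\upharpoonright M=f_2\upharpoonright M$, which is the intended interpretation and is needed for the composites to fix $M$ pointwise.
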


\begin{proof}
	It is clear if $M$ is a disjoint amalgamation base, then any $f_1, f_2$ witnessing this are sufficient. 
	
	For the other direction, given $M_1, M_2 \in \K_{\|M\|}$ such that $M \lek M_l$ for $l = 1, 2$, let $N$ and $f_l:M_l \rightarrow N$ for $l = 1, 2$ be such that $f_1[M_1] \cap f_2[M_2] = f[M]$. Take $f_1' : M_1' \rightarrow N$ an isomorphism extending $f_1$. Then taking $g_1 : M_1 \rightarrow M_1'$ to be the identity, and $g_2 = g_1^{-1} \circ f_2: M_2 \rightarrow M_1'$, we have that $g_1, g_2$ fix $M$ and $g_1[M_1] \cap g_2[M_2] = g_1[M] = M$ as desired.
\end{proof}

The following reformulation essentially says that, since the places where a disjoint amalgamation sends the elements of $M_1 \setminus M_0$ and $M_2 \setminus M_0$ don't overlap at all, if $M_1\cap M_2 = M_0$ already, we may assume $M_1 \setminus M_0$ and $M_2 \setminus M_0$ are fixed.

\begin{lemma}\label{disjoint-ap-with-no-f}
	Suppose $\K$ is an AC, and $M_0 \in \K$. Then $M_0$ is a disjoint amalgamation base if and only if whenever $M_1, M_2 \in \K_{\|M_0\|}$ such that $M_0 \lek M_l$ for $l = 1, 2$ and $M_1 \cap M_2 = M_0$, then there exists $N \in \K$ such that $M_1, M_2 \lek N$.
\end{lemma}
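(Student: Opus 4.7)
The plan is to prove both directions by the standard ``rename elements to move between embeddings and inclusions'' trick, since this is essentially a bookkeeping lemma about how we choose to present disjoint amalgamations.

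For the forward direction, suppose $M_0$ is a disjoint amalgamation base and let $M_1, M_2 \in \K_{\|M_0\|}$ with $M_0 \lek M_l$ and $M_1 \cap M_2 = M_0$. Applying disjoint amalgamation produces $N$ and $f_l : M_l \to N$ fixing $M_0$ with $f_1[M_1] \cap f_2[M_2] = M_0$. The task is to produce $N'' \supseteq M_1 \cup M_2$ with $M_1, M_2 \lek N''$. Proceed in two renaming steps: first pick $N'$ and an isomorphism $g : N \to N'$ extending $f_1^{-1} : f_1[M_1] \to M_1$ (possible since $\K$ is closed under isomorphisms, by choosing fresh elements in $N'$ for $|N| \setminus |f_1[M_1]|$). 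Then $M_1 \lek N'$, $g$ fixes $M_0$, and $g \circ f_2 : M_2 \to N'$ is a $\K$-embedding fixing $M_0$ whose image $(g\circ f_2)[M_2]$ meets $M_1$ exactly in $M_0$. Second, define $N''$ together with a bijection $\varphi : N' \to N''$ that is the identity on $M_1$ and equals $(g \circ f_2)^{-1}$ on $(g \circ f_2)[M_2]$; these two prescriptions agree on $M_1 \cap (g \circ f_2)[M_2] = M_0$ (both are the identity there), and on the remaining elements of $N'$ we send them bijectively to fresh symbols. Transporting the $\K$-structure across $\varphi$ yields $N'' \in \K$ with $M_1, M_2 \lek N''$.

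For the reverse direction, suppose the condition holds and let $M_1, M_2 \in \K_{\|M_0\|}$ with $M_0 \lek M_l$. Pick any bijection between $M_2 \setminus M_0$ and some set $S$ disjoint from $M_1$, and use it (together with the identity on $M_0$) to pull back the $\K$-structure and produce $M_2' \in \K_{\|M_0\|}$ with $M_2' \cong M_2$ over $M_0$ and $M_1 \cap M_2' = M_0$. Applying the hypothesis to $M_1$ and $M_2'$ yields $N \in \K$ with $M_1, M_2' \lek N$. Then the inclusion of $M_1$ into $N$ and the composition of the isomorphism $M_2 \cong M_2'$ with the inclusion $M_2' \hookrightarrow N$ provide the desired $\K$-embeddings $f_1, f_2$ fixing $M_0$, and by construction $f_1[M_1] \cap f_2[M_2] = M_1 \cap M_2' = M_0$.

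No step is really an obstacle here; the only subtlety is checking in the forward direction that the two partial definitions of $\varphi$ agree on their common domain, which reduces precisely to the disjointness condition $f_1[M_1] \cap f_2[M_2] = M_0$ combined with the fact that both $f_1$ and $f_2$ fix $M_0$. The rest is routine use of closure of $\K$ under isomorphisms.
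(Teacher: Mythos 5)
Your proof is correct and follows essentially the same route as the paper's: both directions are handled by renaming universes via closure of $\K$ under isomorphism so that the relevant embeddings become inclusions, with the disjointness condition $f_1[M_1]\cap f_2[M_2]=M_0$ (resp.\ $M_1\cap M_2=M_0$) guaranteeing the renamings are consistent. You simply spell out the renaming steps in more detail than the paper, including the direction the paper dismisses as trivial.
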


\begin{proof}
	The reverse implication is trivial, so suppose $M_0$ is an amalgamation base. Then there exists $N^* \in \K$ with $N^* \geq M_2$ and $f:M_1 \rightarrow N^*$ fixing $M_0$ such that $f[M_1] \cap M_2 = M_0$. Relabelling $|N^*|$ if necessary, without loss of generality, we may assume that $(f[|M_1|] \setminus |M_0|) \cap |M_1| = \varnothing$ (move $(f[|M_1|] \setminus |M_0|)$ to some other set disjoint from both $M_1, N^*$, and replace $N^*$ by the induced structure, similar to below). Define a bijective function $g : |N^*| \rightarrow A$ where
	\begin{enumerate}
		\item $g(a) = f^{-1}(a)$ if $a \in f[|M_1|]$
		\item $g(a) = a$ if $a \in |N^*| \setminus f[|M_1|]$.
	\end{enumerate}
	Note that if $g$ fixes $M_0$ as $f$ fixes $M_0$, so also $g$ fixes $M_2$ as $M_2 \cap f[M_1] = M_0$. Formally, $A = |M_1| \cup (|N^*| \setminus f[|M_1|])$. The bijection $g$ defines a structure $N$ with $|N| = A$ where $g : N^* \cong N$. By the isomorphism axioms of abstract classes, note that $N \in \K$, $g[f[M_1]] = M_1 \lek N$, and $g[M_2] = M_2 \lek N$. So $N$ is as desired.
\end{proof}

The following example shows that, in the setting of AECs, DAP is a strictly stronger property than AP.

\begin{example}\label{example-DAP-stronger-than-AP}
	Let $\K = (K, \lek)$ be the AC determined as follows:
	\begin{enumerate}
		\item $K$ is the class of all models $M = (|M|, E)$ where $E$ is an equivalence relation on $|M|$ such that each equivalence class of $E$ has at most two elements
		\item $\lek$ is the submodel relation $\subseteq$
	\end{enumerate}
	It is not hard to see that this is an AEC with $\LS(\K) = \aleph_0$, stable in all $\mu \geq \aleph_0$, and satisfies AP, JEP, and NMM. However, the disjoint amalgamation property fails in not only $\K$, but in $\K_\mu$ for every cardinal $\mu \geq \aleph_0$.
	
	To see this, take $M = (\mu \setminus \{1, 2\}, E)$ where $E_0$ has equivalence classes $\{i\}$ for $i \in |M|$ (all classes are singletons). For $l = 1, 2$, take $M_l = (|M| \cup \{l\}, E_l)$ where $E_l$ has equivalence classes $\{0, l\}$ and $\{i\}$ for $i \in [3, \mu)$ (that is, we added a new element $l$ to the class of $0$). It is clear that $M, M_1, M_2 \in \K$ and $M \lek M_l$ for $l = 1, 2$. Supposing for contradiction $M$ were a disjoint amalgamation base, by Lemma \ref{disjoint-ap-with-no-f}, there would be a model $N = (|N|, E_N) \in \K$ with $M_l \lek N$ for $l = 1, 2$. But then $0 E_N 1$ and $0 E_N 2$, meaning the equivalence class of $0$ in $N$ had at least three elements, contradicting the definition of $\K$.
\end{example}

\subsection{Limit models}

\begin{definition}\label{universal-and-limit-definitions}
    Let $\K$ be an AC, and $\lambda \geq \LS(\K)$ such that $\K_\lambda$ has AP, and $\delta < \lambda^+$ a limit ordinal.
    \begin{enumerate}
        \item Given $M \lek N$ with $\|M\| = \|N\|$, we say $N$ is \emph{universal over $M$} if for every $N' \in \K$ with $M \lek N'$ and $\|N'\| = \|M\|$, there exists $f:N' \rightarrow N$ fixing $M$. We write $M \lek^u N$ in this case.
        \item Given $M \lek N$, we say $N$ is a \emph{$(\lambda, \delta)$-limit model over $M$} if there exists a $\lek^u$-increasing sequence $\langle M_i : i \leq \delta \rangle$ in $\K_\lambda$ such that $M_0 = M$ and $N = M_\delta$. We will sometimes refer to $\delta$ as the \emph{length} of the limit model.
        \item Given $\kappa < \lambda^+$ regular, we say $N$ is a \emph{$(\lambda, \geq \kappa)$-limit model over $M$} if $N$ is a $(\lambda, \delta)$-limit model over $M$ for some $\delta < \lambda^+$ with $\cof(\delta) \geq \kappa$. We write $M \underhlim N$ in this case.
        \item We say $N$ is a \emph{$(\lambda, \delta)$-limit model} if it is a $(\lambda, \delta)$-limit model over some $M$. Similarly define $(\lambda, \geq \kappa)$-limit model, and $\lambda$-limit model (over $M$). We will sometimes omit $\lambda$ also when it is clear from context.
        \item $\kkappalims$ is the class of $(\lambda, \geq \kappa)$-limit models in $\K$. $\Kkappalims$ is the AC given by the restriction of $\lek$ to $\kkappalims$.
    \end{enumerate}
\end{definition}

Note that if $N$ is a limit model over $M$, $N$ is universal over $M$ also by AP. So $\underhlim \subseteq \lek^u \subseteq \lek$.

Limit models of any limit length $\delta < \lambda^+$ exist in any $\lambda$-stable AEC.

\begin{fact}[{\cite[2.9, 2.12]{grva06a}}]\label{universal-and-limit-extensions-exist}
    Let $\K$ be an AEC and $\lambda \geq \LS(\K)$. Suppose $\K_\lambda$ has AP. If $\langle M_i : i \leq \lambda \rangle$ is a $\lek$-increasing sequence in $\K_\lambda$, and for all $i < \lambda$ $M_{i+1}$ realises all types over $M_i$, then $M_\lambda$ is universal over $M_0$.
    
    If $\K$ is also stable in $\lambda \geq \LS(\K)$ with AP in $\K_\lambda$, then any model $M \in \K_\lambda$ has a universal extension $N \in \K_\lambda$. Moreover, if $\delta < \lambda^+$ is limit, then for any $M \in \K_\lambda$ there exists a $(\lambda, \delta)$-limit model $N$ over $M$.
\end{fact}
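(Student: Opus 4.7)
Although this is standard and cited from \cite{grva06a}, my plan for a proof would proceed as follows. For the universality claim, given $N' \in \K_\lambda$ with $M_0 \lek N'$, the goal is to build a $\lek$-embedding $f : N' \to M_\lambda$ fixing $M_0$. I would decompose $N' = \bigcup_{i < \lambda} N'_i$ into a $\lek$-increasing continuous chain with $N'_0 = M_0$, $\|N'_i\| \leq \lambda$, and each element of $|N'| \setminus |M_0|$ appearing in some $N'_{i+1}$ (using the L\"owenheim--Skolem axiom together with a cofinal enumeration of the new elements). Then define $\lek$-embeddings $f_i : N'_i \to M_i$ fixing $M_0$ by induction so that $i \leq j$ implies $f_j \upharpoonright N'_i = f_i$, and set $f = \bigcup_i f_i$.

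At a successor $i+1$: apply AP to the span $f_i : N'_i \to M_i$ and $N'_i \lek N'_{i+1}$ to obtain $N^*$ with $M_i \lek N^*$ and an embedding $g : N'_{i+1} \to N^*$ extending $f_i$. Any newly added element $c$ of $g[N'_{i+1}] \setminus M_i$ has a Galois type over $M_i$ in $N^*$ that is realized in $M_{i+1}$ by hypothesis; unpacking Galois type equality (together with another AP application) lets me transfer $g$ to an embedding $f_{i+1} : N'_{i+1} \to M_{i+1}$ extending $f_i$. Limit stages are handled by taking unions of the $f_i$, which remain $\lek$-embeddings by the AEC chain axioms.

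For the remaining claims I would use $\lambda$-stability: since $|\gS(M)| \leq \lambda$ for every $M \in \K_\lambda$, one can iterate a single-type-realizing extension $\lambda$-many times to build some $M' \in \K_\lambda$ with $M \lek M'$ realizing every type over $M$. Chaining such extensions along a $\lek$-increasing sequence of length $\lambda$ and applying the first claim then yields a universal extension over any $M \in \K_\lambda$. For a $(\lambda, \delta)$-limit model over $M$ with $\delta < \lambda^+$ a limit ordinal, iterate the universal-extension step $\delta$-many times along a continuous increasing chain starting at $M$, taking unions at limit stages.

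The main obstacle is the successor step of the universality claim: the hypothesis only provides a type-level match between $c \in N^*$ and some $c' \in M_{i+1}$, and converting this into a genuine $\lek$-embedding $N'_{i+1} \to M_{i+1}$ (rather than merely into some further amalgam of $M_{i+1}$ with $N^*$) requires careful, bookkept use of Galois type equality and AP. This is where the type-realization hypothesis is used essentially and constitutes the main technical content of the argument.
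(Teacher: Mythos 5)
This is a quoted Fact from \cite{grva06a}, so the paper gives no proof of its own; your sketch must therefore stand on its own, and its successor step does not. The hypothesis only says that $M_{i+1}$ realises every $1$-ary Galois type over $M_i$. After you amalgamate to get $g : N'_{i+1} \rightarrow N^*$ with $M_i \lek N^*$, the set $g[N'_{i+1}] \setminus |M_i|$ may have $\lambda$ many elements, and even for a single new element $c$, realising $\gtp(c/M_i, N^*)$ by some $c' \in M_{i+1}$ only produces a \emph{further} amalgam $N^{**}$ with $M_{i+1} \lek N^{**}$ and a map over $M_i$ sending $c \mapsto c'$; it does not place the rest of $N'_{i+1}$ inside $M_{i+1}$. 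Since AECs have no notion of a substructure generated by $N'_i \cup \{c\}$, there is no way to cut $N'_{i+1}$ down so that realising finitely (or even $\lambda$-many, one at a time) singleton types yields a $\K$-embedding $f_{i+1} : N'_{i+1} \rightarrow M_{i+1}$. What you would need is that $M_{i+1}$ is universal over $M_i$, which is exactly what is being proved, so the step is circular as well as gapped.

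The standard repair reverses the direction of the construction. Enumerate $|N'| = \{a_j : j < \lambda\}$ and build a $\lek$-increasing continuous chain $\langle N_i : i \leq \lambda \rangle$ with $N_0 = N'$ together with an increasing continuous chain of $\K$-embeddings $g_i : M_i \rightarrow N_i$ with $g_0 = \operatorname{id}_{M_0}$, arranging at the successor step that $a_i \in \operatorname{ran}(g_{i+1})$: the type $g_i^{-1}(\gtp(a_i/g_i[M_i], N_i)) \in \gS(M_i)$ is realised by some $b \in M_{i+1}$, and unwinding the type equality (one AP application) gives $N_{i+1} \geq N_i$ and $g_{i+1} \supseteq g_i$ with $g_{i+1}(b) = a_i$. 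At stage $\lambda$ one has $|N'| \subseteq \operatorname{ran}(g_\lambda)$ and $N' \lek N_\lambda$, so by the coherence axiom $N' \lek g_\lambda[M_\lambda]$ and $g_\lambda^{-1} \upharpoonright N'$ is the desired embedding of $N'$ into $M_\lambda$ fixing $M_0$. Here only singleton types over the full models $M_i$ are ever realised, which is precisely the hypothesis. Your argument for the second and third claims (iterating single-type realisations $\lambda$ times using $|\gS(M)| \leq \lambda$, then chaining universal extensions to build $(\lambda,\delta)$-limits) is correct and is the standard one.
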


It is well known that limit models (over the same base) of the same cofinality of length are isomorphic (over the base).

\begin{fact}[{\cite[1.3.6]{shvi99}}]\label{cofinality-iso}
    Let $\K$ be an AC where $\K_\lambda$ has AP. Let $M, N_1, N_2 \in \K_\lambda$ where $N_l$ is a $(\lambda, \delta_l)$-limit model over $M$ for $l = 1, 2$. Suppose $\cof(\delta_1) = \cof(\delta_2)$. Then there is an isomorphism $f:N_1 \cong N_2$ fixing $M$.

    Furthermore, if $\K_\lambda$ has JEP, then given any $N_1, N_2 \in \K_\lambda$ where $N_l$ is a $(\lambda, \delta_l)$-limit model for $l = 1, 2$ (not necessarily over the same base), then there is an isomorphism $f:N_1 \cong N_2$.
\end{fact}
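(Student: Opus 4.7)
The plan is a standard back-and-forth argument exploiting the universality of each successive link in a limit-model chain. Write $\kappa := \cof(\delta_1) = \cof(\delta_2)$ and let $\langle M^l_i : i \leq \delta_l \rangle$ witness $N_l$ as a $(\lambda, \delta_l)$-limit over $M$ for $l = 1, 2$. First I would reduce to $\delta_1 = \delta_2 = \kappa$: pre-compose each chain with a strictly increasing continuous cofinal map $c_l : \kappa + 1 \to \delta_l + 1$ with $c_l(0) = 0$ and $c_l(\kappa) = \delta_l$. The restricted sequence $\langle M^l_{c_l(i)} : i \leq \kappa \rangle$ is still $\lek^u$-increasing continuous with union $N_l$, since $A \lek^u B \lek C$ in $\K_\lambda$ implies $A \lek^u C$, and continuity is inherited from continuity of $c_l$ together with smoothness of the original chain.

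Next I would construct, by induction on $i < \kappa$, strictly increasing continuous ordinals $\alpha_i, \beta_i < \kappa$ together with partial $\K$-isomorphisms $f_i : A_i \to B_i$ fixing $M$, where $A_i \lek M^1_{\alpha_i}$ and $B_i \lek M^2_{\beta_i}$, each $f_i$ extending its predecessors, starting with $\alpha_0 = \beta_0 = 0$, $A_0 = B_0 = M$, $f_0 = \text{id}_M$. At successor stages I alternate a forth and a back move. In the forth move I want $A_{i+1} = M^1_{\alpha_i + 1}$: apply AP in $\K_\lambda$ to amalgamate $M^1_{\alpha_i + 1}$ and $M^2_{\beta_i}$ over the isomorphism $A_i \cong B_i$, obtaining some $N^* \in \K_\lambda$ with $M^2_{\beta_i} \lek N^*$ and an embedding of $M^1_{\alpha_i + 1}$ into $N^*$ extending $f_i$; then use universality of $M^2_{\beta_i + 1}$ over $M^2_{\beta_i}$ to embed $N^*$ into $M^2_{\beta_i + 1}$ fixing $M^2_{\beta_i}$, and compose to get $f_{i+1}$. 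The back move is symmetric, using universality of $M^1_{\alpha_i + 1}$ over $M^1_{\alpha_i}$ to force $B_{i+1} = M^2_{\beta_i + 1}$. At limit stages I take unions, which remain $\K$-embeddings by coherence. Strict alternation ensures $\sup_i \alpha_i = \sup_i \beta_i = \kappa$ and that every element of $N_1$ eventually lies in some $A_i$ and dually for $N_2$, so $f := \bigcup_{i < \kappa} f_i : N_1 \to N_2$ is the desired isomorphism fixing $M$.

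For the ``furthermore'' claim with JEP but distinct bases $M^1_0, M^2_0$, I would apply JEP to embed both bases into a common $P \in \K_\lambda$, and then observe that each $N_l$ is also a $(\lambda, \kappa)$-limit model over (an isomorphic copy of) $P$ sitting inside $N_l$: universality of $N_l$ over $M^l_0$ places a copy of $P$ inside some $M^l_{i_0}$, and the tail $\langle M^l_j : i_0 \leq j \leq \delta_l \rangle$ prepended with that copy of $P$ witnesses $N_l$ as a $(\lambda, \kappa)$-limit model over $P$. The first part of the fact then yields $N_1 \cong N_2$. The main technical wrinkle in the whole argument is the bookkeeping in the back-and-forth, namely coordinating the $\alpha_i, \beta_i$ and the $A_i, B_i$ so that simultaneously both sequences exhaust $\kappa$, both $N_1$ and $N_2$ are covered, and coherence holds at limits; strict alternation plus smoothness of AECs handles all three cleanly.
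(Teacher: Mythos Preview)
The paper does not prove this statement; it is quoted as a known fact from \cite[1.3.6]{shvi99} without argument. Your back-and-forth construction is exactly the standard proof that citation points to, and it is correct. One small caveat: you invoke smoothness and coherence of AECs at limit stages, whereas the paper states the fact for an arbitrary AC $\K$; in practice every AC in this paper is a sub-AC of an AEC, so the chain axioms are available, but strictly speaking your argument needs unions of $\lek$-chains to land back in $\K_\lambda$ (and the witnessing chains to be continuous at the top), which the bare AC axioms do not guarantee. This is a wrinkle in the paper's phrasing rather than in your proof.
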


The question of when limit models are isomorphic is an important one, and more can be said under stronger assumptions (see Fact \ref{bemain}), but we save this for after our discussion of towers.

\subsection{Independence relations}

Independence relations are relations that mimic the properties of non-forking in first order stable theories. Such relations do not exist in all AECs, but can be extremely useful when they do exist, as is the case for our main result. Throughout this paper, the term will refer to relations on singletons with a triple of models. We follow the notations and conventions of \cite{bema} and \cite{beard3}. See \cite[\textsection 2.3]{beard3} for a little history.

\begin{definition}\label{independence-relation-definitions}
    Let $\K$ be an AC. Let $\dnf$ be a relation on tuples $(M_0, a, M, N)$ where $M_0 \lek M \lek N$ and $a \in N$. We write $a \dnf_{M_0}^N M$ as a shorthand for $(M_0, a, M, N) \in \dnf$.

    We say $\dnf$ is an \emph{independence relation} when it satisfies
    \begin{enumerate}
        \item \emph{Invariance:} for all $M_0 \lek M \lek N$, $a \in N$, and $f : N \cong N'$, if $a \dnf_{M_0}^N M$ then $f(a) \dnf_{f[M_0]}^{N'} f[M]$ (that is, $\dnf$ respects isomorphisms)
        \item \emph{Monotonicity:} for all $M_0 \lek M_1 \lek M \lek N_1 \lek N \lek N_2$ and $a \in N_0$, if $a \dnf_{M_0}^N M$, then $a \dnf_{M_0}^{N_2} M_1$ and $a \dnf_{M_0}^{N_1} M_1$ (that is, we can let $M$ shrink and let $N$ either grow or shrink, and the relation still holds)
        \item \emph{Base monotonicity:} for all $M_0 \lek M_1 \lek M \lek N$ and $a \in N$, if $a \dnf_{M_0}^N M$, then $a \dnf_{M_1}^N M$ (that is, we can grow $M_0$ and the relation still holds)
    \end{enumerate}
\end{definition}

We can define a notion of non-forking on types from any independence relation.

\begin{definition}
    Let $\K$ be an AC. Let $\dnf$ be an independence relation on $\K$, $M_0 \lek M$, and $p \in \gS(M)$. We say $p$ $\dnf$-does not fork over $M_0$ if there exist $N \in \K$ and $a \in N$ such that $p = \gtp(a/M, N)$ and $a \dnf_{M_0}^N M$.
\end{definition}

Note that by invariance and monotonicity, this is equivalent if we replace the existence of such $a$ and $N$ with `for all' such $a$ and $N$.

Independence relations may (or may not) satisfy some collection of the properties below, familiar from non-forking in first order stable theories.

\begin{definition}\label{dnf-properties}
    Let $\K$ be an AC, $\kappa$ a regular cardinal, $\delta$ a limit ordinal, and $\dnf$ an independence relation on $\K$. We say $\dnf$ satisfies:
    \begin{enumerate}
        \item \emph{Uniqueness} if whenever $M \lek N$ and $p_1, p_2 \in \gS(N)$ such that $p_l$ $\dnf$-does not fork over $M$ for $l = 1, 2$ and $p_1 \upharpoonright M = p_2 \upharpoonright M$, we have $p_1 = p_2$.
        \item \emph{Weak uniqueness} if whenever $M_0 \lek^u M \lek N$ and $p_1, p_2 \in \gS(N)$ such that $p_l$ $\dnf$-does not fork over $M_0$ for $l = 1, 2$ and $p_1 \upharpoonright M = p_2 \upharpoonright M$, we have $p_1 = p_2$.
        \item \emph{Extension} if whenever $M_0 \lek M \lek N$ and $p \in \gS(M)$ where $p$ $\dnf$-does not fork over $M_0$, there exists $q \in \gS(N)$ such that $q$ $\dnf$-does not fork over $M_0$ and $q \upharpoonright M = p$.
        \item \emph{Weak extension} if whenever $M_0 \lek^u M \lek N$ and $p \in \gS(M)$ where $p$ $\dnf$-does not fork over $M_0$, there exists $q \in \gS(N)$ such that $q$ $\dnf$-does not fork over $M_0$ and $q \upharpoonright M = p$.
        \item \emph{Transitivity} if whenever $M_0 \lek M \lek N$ and $p \in \gS(N)$ such that $p \upharpoonright M$ $\dnf$-does not fork over $M_0$ and $p$ $\dnf$-does not fork over $M$, then $p$ $\dnf$-does not fork over $M_0$.
        \item \emph{Existence} if whenever $M \in \K$ and $p \in \gS(M)$, $p$ $\dnf$-does not fork over $M$.
        \item \emph{$\delta$-local character} if whenever $\langle M_i : i \leq \delta \rangle$ is a $\lek^u$-increasing sequence in $\K$ continuous at $\delta$ and $p \in \gS(M_\delta)$, then there exists $i < \delta$ such that $p$ $\dnf$-does not fork over $M_i$.
        \item \emph{$(\geq \kappa)$-local character} if $\dnf$ has $\delta$-local character for all $\delta$ where $\cof(\delta) \geq \kappa$.
        \item \emph{$\delta$-universal continuity} if whenever $\langle M_i : i \leq \delta \rangle$ is a $\lek^u$-increasing sequence in $\K$ continuous at $\delta$ and $p \in \gS(M_\delta)$ such that $p \upharpoonright M_i$ $\dnf$-does not fork over $M_0$ for all $i < \delta$, then $p$ $\dnf$-does not fork over $M_0$.
        \item \emph{$(\geq \kappa)$-universal continuity} if $\dnf$ has $\delta$-universal continuity for all limits $\delta$ where $\cof(\mu) \geq \kappa$.
        \item \emph{Universal continuity} if $\dnf$ has $(\geq \aleph_0)$-universal continuity.
        \item \emph{non-forking amalgamation} if for any $M_0, M_1, M_2 \in \K$ where $M_0 \lek M_l$ and $a_l \in M_l$ for $l = 1, 2$, there exist $N \in \K$ and $f_l : M_l \rightarrow N$ fixing $M_0$ for $l = 1, 2$ such that $\gtp(f_l(a_l)/f_{3-l}[M_{3-l}], N)$ $\dnf$-does not fork over $M_0$ for $l = 1, 2$ (that is, we can amalgamate $M_1$ and $M_2$ over $M_0$ such that the types of (the images of) the singletons do not fork over (the images of) the `opposite' models).
        \item\label{disjoint-nf-amalgamation-def} \emph{disjoint non-forking amalgamation} if for any $M_0, M_1, M_2 \in \K$ where $M_0 \lek M_l$ and $a_l \in M_l$ for $l = 1, 2$, there exist $N \in \K$ and $f_l : M_l \rightarrow N$ fixing $M_0$ for $l = 1, 2$ such that $\gtp(f_l(a_l)/f_{3-l}[M_{3-l}], N)$ $\dnf$-does not fork over $M_0$ for $l = 1, 2$, and $f_1[M_1] \cap f_2[M_2] = M_0$ (that is, we can perform non-forking amalgamation with the minimal intersection).
    \end{enumerate}
\end{definition}

\begin{remark}\label{existence-transitivity-remark}
    Many of the above properties are related - for example, it is well known that uniqueness and extension imply transitivity (see e.g. \cite[Corollary III.4.4]{sh:c} or \cite[2.16]{bemav1}), and if $\K = \K'_{(\lambda, \geq \kappa)}$ for some AEC $\K'$ stable in $\lambda \geq \LS(\K')$, then $(\geq \kappa)$-local character implies existence \cite[3.10]{bema}.

\end{remark}

\begin{remark}
	By the same method as Lemma \ref{disjoint-ap-with-no-f}, disjoint non-forking amalgamation is equivalent if we enforce that $f_l = \operatorname{id}_{M_l}$ for $l = 1, 2$. That said we will generally work with the version in Definition \ref{independence-relation-definitions}(\ref{disjoint-nf-amalgamation-def}) since this allows us more flexibility when proving the property holds.
\end{remark}

When $\K$ is a nice sub-AC of an AEC but does not contain arbitrary unions, the usual notion of universal continuity (or $(\geq \aleph_0)$-continuity in the notation above) is less useful than the `universal continuity*' defined originally in \cite[3.2]{bema} (which is equivalent when arbitrary unions of universal chains \emph{are} in the AC). While writing \cite{beard3} the author realised there is a minor missing assumption in \cite[3.2]{bema}, which is that the types computed in the larger class are compatible with those computed in the smaller class. We formalise this with the following definition.

\begin{definition}[{\cite[Definition 2.11]{beard3}}]
    Let $\K$ be an AC with sub-AC $\K'$, both with AP. We say \emph{$\K'$ respects types from $\K$} if 
    \begin{enumerate}
        \item for all $M \in \K'$, $\Phi_M:\gS_{\K'}(M) \rightarrow \gS_\K(M)$ given by $\Phi_M(\gtp_{\K'}(a/M, N)) = \gtp_\K(a/M, N)$ for $M \leq_{\K'} N$ and $a \in N$ is a well defined bijection
        \item when $M \leq_{\K'} N$, $p \in \gS_{\K'}(M)$ and $q \in \gS_{\K'}(N)$, then $p \subseteq q$ if and only if $\Phi_M(p) \subseteq \Phi_N(q)$.
    \end{enumerate}
\end{definition}

The upshot of classes respecting types is that we can more or less forget about which class the types over models in $\K'$ are computed in. As such we will not normally distinguish between $p$ and $\Phi_M(p)$. In particular, if $\dnf$ is an independence relation on $\K'$, we may ask whether $p$ $\dnf$-does not fork over $M_0$ when $p \in \gS_{\K}(M)$ and $M_0 \leq_{\K'} M$.

\begin{definition}[{\cite[Definition 3.2]{bema}}]\label{continuity-star-def}
    Let $\K$ be an AC with sub-AC $\K'$, both with AP, and $\K'$ respects types from $\K$. Let $\dnf$ be an independence relation on $\K'$. We say that $\dnf$ has \emph{universal continuity* in $\K$} if whenever 
    \begin{enumerate}
    	\item $\delta$ is a limit ordinal
    	\item $\langle M_i : i < \delta \rangle$ is a $\lek^u$-increasing sequence in $\K'$ where $\bigcup_{i<\delta} M_i \in \K$
    	\item $M \in \K'$ is such that $M_i \lek M$ for all $i < \delta$
    	\item $\langle p_i : i < \delta \rangle$ is an increasing sequence of types with $p_i \in \gS(M_i)$ such that $p_i$ $\dnf$-does not fork over $M_i$ for all $i<\delta$
    \end{enumerate} 
    
    then there is a unique $p \in \gS(\bigcup_{i<\delta} M_i)$ such that $p_i \subseteq p$ for all $i < \delta$.
\end{definition}

Formally, in the above, $p_i \in \gS_{\K'}(M)$, $p \in \gS_\K(\bigcup_{i < \kappa} M_i)$, and the final conclusion is actually $\Phi_{M_i}(p_i) \subseteq p$ for all $i < \delta$, but perhaps that obscures the spirit of the definition. 

\begin{remark}
    Note that all cases where universal continuity* is used in \cite{bema}, $\Kkappalims \subseteq \K' \subseteq \K_\lambda$ for some AEC $\K$ stable in $\lambda \geq \LS(\K)$ with AP, JEP, and NMM in $\K_\lambda$. Since for every $M \in \K_\lambda$ there is $N \in \Kkappalims$ with $M \lek N$, this implies $\K'$ respects types from $\K$, so this slightly revised definition corrects \cite[3.2]{bema} without causing further issues with those results. See \cite[Remark 2.16]{beard3} for more details.
\end{remark}

For the main body of this paper, we only concern ourselves with the case when $\K$ is an AEC stable in $\lambda$, and $\K'$ is an AC with $\Kkappalims \subseteq \K' \subseteq \K_\lambda$.

\begin{fact}[{\cite[Lemma 2.12]{beard3}}]
	Suppose $\K$ is an AEC stable in $\lambda \geq \LS(\K)$, and $\K_\lambda$ has AP. Suppose $\K'$ is an AC where $\Kkappalims \subseteq \K' \subseteq \K_\lambda$. Then
	\begin{enumerate}
		\item Embeddings in $\K'$ are just $\K$-embeddings between models in $\K'$
		\item $\K'$ respects types from $\K$
	\end{enumerate}
\end{fact}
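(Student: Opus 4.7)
The plan is to prove both parts by unpacking definitions and exploiting the assumption $\Kkappalims \subseteq \K' \subseteq \K_\lambda$ together with the L\"owenheim--Skolem axiom for $\K$ and the existence of limit models in $\lambda$-stable AECs (Fact \ref{universal-and-limit-extensions-exist}). The common theme is that any $\K$-witness, whether an amalgamation or a representative of a type, can be shrunk to a model of cardinality $\lambda$ via downward L\"owenheim--Skolem and then extended to a $(\lambda, \geq \kappa)$-limit model, placing the witness inside $\K'$.

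For part (1), I would observe that a $\K$-embedding $f : M \to N$ between models of $\K'$ satisfies $f : M \cong f[M]$ with $f[M] \lek N$. Since $\K'$ is an AC, its underlying class is closed under isomorphism, so $f[M] \in \K'$; and since $\leq_{\K'}$ is by the definition of sub-AC just $\lek \upharpoonright (K')^2$, we obtain $f[M] \leq_{\K'} N$. Hence $f$ is a $\K'$-embedding. The reverse direction is immediate, since any $\K'$-embedding is a $\K$-embedding between the same models.

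For part (2), the main work is showing that $\Phi_M : \gS_{\K'}(M) \to \gS_\K(M)$ is a well-defined bijection. Well-definedness is immediate, because any $\K'$-amalgamation witnessing equality of two pairs over $M$ is automatically a $\K$-amalgamation. For injectivity, given a $\K$-amalgamation $N^*$ witnessing $\gtp_\K(a/M, N_1) = \gtp_\K(b/M, N_2)$ via embeddings $f, g$ fixing $M$, I would apply downward L\"owenheim--Skolem to obtain $N^{**} \lek N^*$ of cardinality $\lambda$ containing $|M| \cup f[|N_1|] \cup g[|N_2|]$; coherence then yields $f[N_1], g[N_2] \lek N^{**}$. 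Stability and Fact \ref{universal-and-limit-extensions-exist} let me extend $N^{**}$ to a $(\lambda, \geq \kappa)$-limit model $N^{***} \in \Kkappalims \subseteq \K'$, which witnesses $\K'$-equality through the composed embeddings. Surjectivity follows the same template: given a $\K$-type over $M$ realized by $(a, N)$, shrink $N$ to cardinality $\lambda$ with L\"owenheim--Skolem, then extend to a limit model in $\Kkappalims$, producing a $\K'$-type whose image under $\Phi_M$ is the original.

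The compatibility-with-restriction clause of respecting types uses the same pattern: $p \subseteq q$ means a representative of $q$ in $\K'$, restricted to $M$, is $\K'$-equivalent to a representative of $p$, and such a $\K'$-amalgamation exists exactly when the corresponding $\K$-amalgamation does, by the injectivity argument above. The only real (minor) obstacle throughout is the bookkeeping during the L\"owenheim--Skolem step to preserve $\lek$-substructure relations between the various models, but this is handled routinely by coherence; the role of stability is essentially just to guarantee (via Fact \ref{universal-and-limit-extensions-exist}) that $\Kkappalims$ contains the models needed to absorb the $\K_\lambda$-amalgamations produced by L\"owenheim--Skolem.
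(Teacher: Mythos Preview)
The paper does not prove this statement; it is cited as a Fact from \cite[Lemma 2.12]{beard3}, so there is no in-paper proof to compare against. Your argument is correct and follows the natural route one would expect the cited proof to take: part (1) is immediate from closure of ACs under isomorphism together with $\leq_{\K'} = \lek \upharpoonright (K')^2$, and part (2) uses the key observation that any $\K$-amalgamation or type representative can be pushed into $\K'$ by first applying downward L\"owenheim--Skolem to land in $\K_\lambda$ and then extending to a $(\lambda,\geq\kappa)$-limit model via Fact~\ref{universal-and-limit-extensions-exist}.

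One small point worth making explicit: the definition of ``respects types from $\K$'' presupposes that $\K'$ has AP, which is not listed among your hypotheses. Your injectivity argument in fact establishes AP for $\K'$ along the way (amalgamate in $\K$, shrink, extend to a limit model), so you might state this as a preliminary observation before launching into the verification of the bijection.
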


\begin{definition}
	Let $\dnf$ be an independence relation on an AC $\K$ with AP. Suppose $\K'$ is an AC that respects types from $\K$. The \emph{restriction of $\dnf$ to $\K'$} is the relation $\dnf \upharpoonright \K'$ where $a \overset{N}{\underset{M_0}{(\dnf \upharpoonright\K')}} M$ if and only if $a \dnf_{M_0}^N M$ and $M_0, M, N \in \K'$.
\end{definition}

\begin{remark}
	Suppose $\K'$ be an AC that respects types from another AC $\K$, and $\dnf$ is an independence relation on $\K$. Then $\dnf \upharpoonright \K'$ is an independence relation on $\K'$. Since types in $\K'$ are practically just types from $\K$ over models in $\K'$, the  can effectively be viewed as a relation on the types $p \in \gS_\K(M)$ for $M \in \K'$ over models in $\K'$.
\end{remark}

\begin{definition}
    Let $\K$ be an AEC stable in $\lambda \geq \LS(\K)$ where $\K_\lambda$ has AP, and $\K'$ an AC with AP such that $\Kkappalims \subseteq \K' \subseteq \K_\lambda$. Let $\dnf$ be an independence relation on $\K'$. Then we say $\dnf$ has \emph{$\Kkappalims$-universal continuity* in $\K$} if the restriction of $\dnf$ to $\Kkappalims$ has universal continuity* in $\K$.
    
    We will sometimes omit `in $\K$' when $\K$ is obvious.
\end{definition}

In applications, typically $\K' = \K$ or $\K' = \Kkappalims$. Note that in the latter case, $(\geq \aleph_0)$-universal continuity is practically only as good as $(\geq \kappa)$-universal continuity (as the unions of shorter chains may not be in $\Kkappalims$). However $\Kkappalims$-universal continuity* implies not only $(\geq \kappa)$-universal continuity, but also gives us something like continuity for shorter chains, provided we are allowed to peek back inside $\K_\lambda$.

\begin{fact}[{\cite[3.11]{bema}}]\label{continuity-star-implies-high-continuity}
    Let $\K$ be an AEC stable in $\lambda \geq \LS(\K)$, and $\K'$ an AC with $\Kkappalims \subseteq \K' \subseteq \K_\lambda$, where $\K_\lambda$ has AP. Let $\dnf$ be an independence relation on $\K'$ with uniqueness, existence, and $\Kkappalims$-universal continuity*. Then $\dnf$ has $(\geq \kappa)$-universal continuity.
\end{fact}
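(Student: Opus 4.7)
The plan is to reduce $(\geq \kappa)$-universal continuity to a single application of $\Kkappalims$-universal continuity* on a cofinal $\Kkappalims$-sub-chain, and then close the argument by combining uniqueness of non-forking extensions with a coherent construction of a non-forking witness.

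First, given $\langle M_i : i \leq \delta\rangle$ a $\lek^u$-increasing continuous chain in $\K'$ with $\cof(\delta) \geq \kappa$ and $p \in \gS(M_\delta)$ such that each $p_i := p \upharpoonright M_i$ does not fork over $M_0$, I would pass to a cofinal subsequence of indices $i$ with $\cof(i) \geq \kappa$, of order type $\cof(\delta)$. For such an $i$ the model $M_i$ is a $(\lambda, \geq\kappa)$-limit over $M_0$ and hence lies in $\Kkappalims$. By continuity of the original chain, the union of this sub-chain is still $M_\delta$, which is in $\Kkappalims$. Now $\Kkappalims$-universal continuity* applies to the sub-chain, with common upper bound $M := M_\delta$ and the restricted types $p_i$ (each trivially not forking over $M_i$ by existence), producing the unique $p^\ast \in \gS(M_\delta)$ extending all the $p_i$'s. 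Since $p$ itself is such an extension, $p^\ast = p$.

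Next, I would show that any non-forking extension $q \in \gS(M_\delta)$ of $p \upharpoonright M_0$ over $M_0$ must equal $p$. For each $i$ in the sub-chain, base monotonicity gives that $q \upharpoonright M_i$ does not fork over $M_0$; both $q \upharpoonright M_i$ and $p_i$ are then non-forking extensions of $p \upharpoonright M_0$ to $M_i$ that agree on $M_0$, so uniqueness forces $q \upharpoonright M_i = p_i$. The uniqueness clause of $\Kkappalims$-universal continuity* obtained above then forces $q = p$, and hence $p$ does not fork over $M_0$.

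It therefore remains to exhibit such a non-forking extension $q$. The idea is to build one by transfinite recursion on $i < \cof(\delta)$: at each step, choose a witness for $p_i$'s non-forking over $M_0$, and use amalgamation in $\K_\lambda$ together with invariance to enlarge the ambient model $N_i$ so that a single realization $a$ satisfies $\gtp(a/M_i, N_i) = p_i$ and $a \dnf_{M_0}^{N_i} M_i$. At limit stages of cofinality $\geq \kappa$ in this construction, $\Kkappalims$-universal continuity* certifies that the emerging type on $M_i$ coincides with $p_i$. Setting $N := \bigcup_i N_i$ and $q := \gtp(a/M_\delta, N)$ should then yield the desired non-forking extension. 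The main obstacle lies precisely in this last passage: the monotonicity axioms of $\dnf$ permit shrinking the right-hand argument but not growing it, so lifting ``$a \dnf_{M_0}^{N} M_i$ for every $i < \cof(\delta)$'' to ``$a \dnf_{M_0}^{N} M_\delta$'' is the essential content of the theorem and will require the carefully coordinated use of universal continuity*, uniqueness, and the coherence of the witnesses chosen along the chain.
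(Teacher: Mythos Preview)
This statement is recorded in the paper as a \emph{Fact} cited from \cite[3.11]{bema}; the present paper gives no proof of its own, so there is nothing to compare against directly. I will therefore assess your proposal on its own merits.

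Your overall architecture (reduce to a chain in $\Kkappalims$, invoke continuity* for uniqueness, show any non-forking extension of $p\upharpoonright M_0$ to $M_\delta$ must equal $p$, then exhibit such an extension) is reasonable, but the argument is incomplete in the final step, and you essentially acknowledge this. Producing a $q\in\gS(M_\delta)$ that does not fork over $M_0$ and extends $p\upharpoonright M_0$ is precisely the extension property, which is \emph{not} among the hypotheses (only uniqueness, existence, and $\Kkappalims$-universal continuity* are assumed). Your recursive construction runs into the circularity you yourself flag: passing from ``$a\dnf_{M_0}^{N} M_i$ for all $i$'' to ``$a\dnf_{M_0}^{N} M_\delta$'' is exactly the continuity statement being proved. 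Saying this ``will require the carefully coordinated use'' of the listed tools is not a proof; you have not explained what that coordination is or why it succeeds.

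There is also a genuine problem in your first step. You claim that for $i$ with $\cof(i)\geq\kappa$ the model $M_i$ is a $(\lambda,\geq\kappa)$-limit over $M_0$, but the chain is only assumed continuous at $\delta$, not at such $i$; without continuity at $i$, $M_i$ need not equal $\bigcup_{r<i}M_r$, and hence need not lie in $\Kkappalims$. Worse, if $\cof(\delta)=\kappa$ then (since $\kappa$ is regular) there are \emph{no} $i<\delta$ with $\cof(i)\geq\kappa$, so your proposed cofinal subsequence is empty. One can try to repair this by replacing the $M_i$'s with suitable partial unions, but then those unions are $(\lambda,\cof(\alpha))$-limits for $\alpha$ below $\cof(\delta)$, which again may have cofinality $<\kappa$. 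Getting a $\lek^u$-increasing chain genuinely inside $\Kkappalims$ with union $M_\delta$ requires more care than you have given it.
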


In fact, when $\dnf$ is defined on all models (or just all limit models) in the above, we have that universal continuity is equivalent to universal continuity* in $\K_\lambda$.

\begin{fact}[{\cite[3.12]{bema}}]\label{continuity-implies-continuity-star}
    Let $\K$ be an AEC stable in $\lambda \geq \LS(\K)$, and $\K'$ an AC with $\K_{(\lambda, \geq \aleph_0)} \subseteq \K' \subseteq \K_\lambda$, where $\K_\lambda$ has AP. Let $\dnf$ be an independence relation on $\K'$ with uniqueness, existence, and $\Kkappalims$-universal continuity*. Then $\dnf$ has universal continuity if and only if $\dnf$ has universal continuity* in $\K_\lambda$.
\end{fact}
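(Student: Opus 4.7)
The statement splits into two implications, each of which is really a question about handling chains of cofinality $< \kappa$, since Fact \ref{continuity-star-implies-high-continuity} already gives $(\geq\kappa)$-universal continuity from the standing hypotheses. So the game is to bootstrap from the high-cofinality case to the full statement.

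For $(\Leftarrow)$, assume universal continuity* in $\K_\lambda$. Given a $\lek^u$-increasing sequence $\langle M_i : i \le \delta\rangle$ continuous at $\delta$ and $p\in\gS(M_\delta)$ whose restrictions do not $\dnf$-fork over $M_0$, the case $\cof(\delta) \geq \kappa$ is immediate from Fact \ref{continuity-star-implies-high-continuity}. For $\cof(\delta) < \kappa$, the plan is to $\lek^u$-extend the chain past $M_\delta$ to a chain $\langle M_i : i \le \delta + \kappa\rangle$ of high cofinality at the top, and build an extension $\tilde p \in \gS(M_{\delta + \kappa})$ of $p$ whose restrictions to every $M_j$ remain non-forking over $M_0$. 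This $\tilde p$ can be produced step by step by pairing $p$ with the $p_i := p \upharpoonright M_i$ and invoking universal continuity* in $\K_\lambda$ to glue the resulting chain of non-forking types into a single type on the union. Once $\tilde p$ is in hand, Fact \ref{continuity-star-implies-high-continuity} applied to the extended chain yields $\tilde p \dnf$-does not fork over $M_0$; monotonicity then gives the same for $p = \tilde p \upharpoonright M_\delta$.

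For $(\Rightarrow)$, assume universal continuity. Given a chain $\langle M_i : i < \delta\rangle$ in $\K'$ with union $M_\delta \in \K_\lambda$, an $M \in \K'$ above all $M_i$, and an increasing sequence of non-forking types $p_i \in \gS(M_i)$, we must produce a unique $p \in \gS(M_\delta)$ extending all $p_i$. Uniqueness is comparatively easy: if two candidates both extend the chain, each restriction agrees with $p_i$ and is thus non-forking over $M_0$; by universal continuity the candidates themselves are non-forking over $M_0$, agree on $M_0$, and so by uniqueness they coincide. For existence, the case $\cof(\delta) \ge \kappa$ follows directly from the hypothesized $\Kkappalims$-universal continuity* applied to a cofinal subchain of length $\cof(\delta)$; for smaller cofinality, I would pad by amalgamating $M$ with a $(\lambda,\geq\kappa)$-limit extension inside $\K'$, transport the $p_i$ along the amalgamation, and then assemble the common extension by invoking $\Kkappalims$-universal continuity* on the padded cofinal subchain before pulling back to $\gS(M_\delta)$.

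The main obstacle throughout is the absence of a standalone extension axiom in the hypotheses: every new type we introduce must either be produced as the unique common extension of a chain (via $\Kkappalims$-universal continuity*) or as the unique non-forking extension of a known non-forking type (via the uniqueness hypothesis). The careful orchestration needed to route a short-cofinality chain through a high-cofinality configuration where these tools apply, without accidentally destroying the non-forking of restrictions along the way, is the technical heart of both directions.
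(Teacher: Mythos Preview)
The paper does not contain a proof of this statement: it is recorded as a Fact with a citation to \cite[3.12]{bema}, so there is no in-paper argument against which to compare your proposal.

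That said, your sketch has a genuine gap in the $(\Leftarrow)$ direction. You propose, in the low-cofinality case, to pad the chain out to length $\delta+\kappa$ and build an extension $\tilde p$ of $p$ whose restriction to every $M_{\delta+j}$ still does not $\dnf$-fork over $M_0$, then apply Fact~\ref{continuity-star-implies-high-continuity} to the longer chain. The problem is the construction of $\tilde p$: you need, for each successor step, a non-forking extension of the current type to the next model, and the hypotheses give you no extension axiom. Universal continuity* in $\K_\lambda$ does not help here, since it only produces the unique common extension of an \emph{already given} increasing chain of types over a union; it does not manufacture a non-forking extension of a single type to a strictly larger model, nor does it assert that the unique common extension it outputs is itself non-forking over $M_0$. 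Your phrase ``pairing $p$ with the $p_i$ and invoking universal continuity*'' does not describe a mechanism that yields a type over $M_{\delta+1}$. Without such a mechanism, the appeal to Fact~\ref{continuity-star-implies-high-continuity} at the end is unsupported.

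A similar vagueness affects the existence half of your $(\Rightarrow)$ direction: ``pad by amalgamating \dots\ transport the $p_i$ along the amalgamation \dots\ pull back'' is not yet an argument, and in particular the $M_i$ in the given chain need not lie in $\Kkappalims$, so $\Kkappalims$-universal continuity* does not apply to them directly. Your uniqueness argument for $(\Rightarrow)$, by contrast, is correct and clean.
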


Now we have the terminology to give the underlying hypotheses for the main theorem. 

\begin{hypothesis}\label{general-hypothesis}
    Let $\K$ be an AEC stable in $\lambda$, where $\K_\lambda$ has AP, JEP, and NMM, and let $\K'$ be some AC where $\Kkappalims \subseteq \K' \subseteq \K_\lambda$. Let $\dnf$ be an independence relation on $\K'$ satisfying uniqueness, existence, non-forking amalgamation, $\Kkappalims$-universal continuity* in $\K_\lambda$, and $(\geq \kappa)$-local character.
\end{hypothesis}

We state it in this generality to make it easy to apply, but in fact we will only ever use the restriction of $\dnf$ to $\Kkappalims$. Because of this, we use the following hypothesis throughout the proof.

\begin{hypothesis}\label{proof-hypothesis}
    Let $\K$ be an AEC stable in $\lambda$, where $\K_\lambda$ has AP, JEP, and NMM. Let $\dnf$ be an independence relation on $\Kkappalims$ satisfying uniqueness, existence, non-forking amalgamation, universal continuity* in $\K_\lambda$, and $(\geq \kappa)$-local character.
\end{hypothesis}

\begin{remark}\label{general-hypothesis-implies-proof}
    Hypothesis \ref{proof-hypothesis} is identical to \cite[3.7]{bema}. Also, Hypothesis \ref{general-hypothesis} implies that Hypothesis \ref{proof-hypothesis} holds for the restriction of $\dnf$ to $\Kkappalims$.
\end{remark}

\begin{remark}
    Note that under Hypothesis \ref{proof-hypothesis}, $\dnf$ has existence, transitivity, and $(\geq \kappa)$-universal continuity by Remark \ref{existence-transitivity-remark} and Fact \ref{continuity-star-implies-high-continuity}.
\end{remark}

\subsection{Towers}\label{subsection-towers}

Throughout this subsection, assume Hypothesis \ref{proof-hypothesis} holds for some $\K$ stable in $\lambda \geq \LS(\K)$, $\kappa < \lambda^+$ regular, and $\dnf$ an independence relation on $\Kkappalims$.

\begin{notation}
    Let $I$ be a well ordered set. Define $I^-$ as follows:
    \begin{enumerate}
        \item if $\operatorname{otp}(I)$ is $0$ or limit, $I^- = I$
        \item if $\operatorname{otp}(I)$ is successor and $i_1$ is the final element of $I$, $I^- = I \setminus \{i_1\}$ (with the restriction of the ordering of $I$).
    \end{enumerate}
    That is, $I^-$ is $I$ with the final element removed, if there is one. It is the largest set of $i \in I$ such that $i+1$ exists in $I$. Note that is $\alpha$ is an ordinal, then $\alpha^-$ is as well.
\end{notation}

\begin{definition}\label{tower-definitions}
    A \emph{tower} is a sequence $\langle M_i : i \in I \rangle ^ \wedge \langle a_i : i \in I^- \rangle$ such that
    \begin{enumerate}
        \item $I$ is a well ordered set with $\operatorname{otp}(I) < \lambda^+$
        \item $\langle M_i : i \in I \rangle$ is a $\lek$-increasing sequence of $(\lambda, \geq \kappa)$-limit models
        \item for all $i \in I^-$, $a_i \in M_i$.
    \end{enumerate}
    We call $I$ the \emph{index} of $\calt$.

\end{definition}

\begin{remark}\label{towers-exist}
    Although in \cite[3.17]{bema} we ask $a_i \in |M_{i+1}| \setminus |M_i|$, we lift this restriction here as it is not actually used in the proof of \cite[3.1]{bema} or the theory of towers developed to prove it (the restriction in that paper was inherited from the tower presentation in \cite{vas19}). By removing this restriction, we no longer need to assume NMM (this was only needed to enforce $a_i \notin M_i$ at various points before to match the tower definition) - but we can remove this anyway using Remark \ref{remark-JEP-NMM-not-needed}.
\end{remark}

\begin{definition}

    Given a tower $\calt = \langle M_i : i \in I \rangle ^ \wedge \langle a_i : i \in I^- \rangle$, we say

    \begin{enumerate}
        \item $\calt$ is \emph{continuous at $i \in I$} if $\langle M_i : i \in I \rangle$ is continuous at $i \in I$ (that is, $M_i = \bigcup_{r < i} M_r$)
        \item given an order $\leq^*$ on $\kkappalims$ where $\leq^* \subseteq \lek$, $\calt$ is \emph{$\leq^*$-increasing} if and only if $\langle M_i : i \in I\rangle$ is $\leq^*$-increasing
        \item given an order $\leq^*$ on $\kkappalims$ where $\leq^* \subseteq \lek$, $\calt$ is \emph{strongly $\leq^*$-increasing} if and only if $\langle M_i : i \in I \rangle$ is (that is, for all $i \in I$, $\bigcup_{r<i} M_r \leq^* M_i$)
        \item given $I_0 \subseteq I$, the \emph{restriction of $\calt$ to $I_0$} is $\calt \upharpoonright I_0 = \langle M_i : i \in I_0 \rangle ^\wedge \langle a_i : i \in (I_0)^-\rangle$
    \end{enumerate}
\end{definition}

Given a model $M_0 \in \Kkappalims$, for any $\alpha < \lambda^+$ there is a strongly $\underhlim$-increasing sequence $\langle M_i : i < \lambda \rangle$ and a sequence $\langle a_i : i < \alpha \rangle$ such that $\calt = \langle M_i : i < \alpha \rangle ^\wedge \langle a_i : i < \alpha^-\rangle$ is a tower by repeatedly taking $\underhlim$-extensions over the union of the previous models and taking $a_i$ arbitrarily. Or, you can modify the construction to make it only $\underhlim$-increasing, but continuous at all $i < \alpha$ with $\cof(i) \geq \kappa$, by taking unions at these steps in the construction.

\begin{definition}\label{tower-ordering-definintion}
    Let $\calt = \langle M_i : i \in I \rangle ^ \wedge \langle a_i : i \in I^- \rangle$ and $\calt = \langle M_i' : i \in I' \rangle ^ \wedge \langle a_i' : i \in (I')^- \rangle$ be two towers. We define the \emph{tower ordering} $\lesst$ by $\calt \lesst \calt'$ if and only if
    
    \begin{enumerate}
        \item $I \subseteq I'$
        \item for all $i \in I$, $M_i \lek^u M_i'$
        \item for all $i \in I^-$, $a_i = a_i'$
        \item for all $i \in I^-$, $\gtp(a_i/M_i', M_{i+_I1}')$ $\dnf$-does not fork over $M_i$.
    \end{enumerate}

    We also define a stronger tower ordering $\lesstrong$ as above, but replacing clause (2) with 
    \begin{enumerate}
        \item[2*.] for all $i \in I$, $M_i \underhlim M_i'$.
    \end{enumerate}
\end{definition}

Note that because of condition (3), when towers are comparable by the ordering, we don't need to use different symbols for the singletons of different towers. Also note that both $\lesst$ and $\lesstrong$ are transitive by transitivity of $\dnf$.

We can define unions of towers in the natural way. In general a union of towers may not be a tower, but provided the union is long enough, it will also be a tower.

\begin{definition}\label{unions-of-towers}
    Let $\langle \calt^j : j < \alpha \rangle$ be a $\lesst$-increasing sequence of towers, where $\calt^j = \langle M_i^j : i \in I^j \rangle ^\wedge \langle a_i : i \in (I^j)^-\rangle$. Suppose that $\bigcup_{j < \alpha} I^j$ is a well ordered set. Define $\bigcup_{j < \alpha} \calt^j = \langle M_i^\alpha : i \in I^\alpha \rangle ^\wedge \langle a_i : i \in (I^\alpha)^-\rangle$ where
    \begin{enumerate}
        \item $I^\alpha = \bigcup_{j<\alpha} I^j$
        \item $M_i^\alpha = \bigcup_{j<\alpha} M_i^j$
    \end{enumerate}
\end{definition}

\begin{fact}[{\cite[3.22]{bema}}]\label{high-cofinality-unions-are-towers}
    Let $\langle \calt^j : j < \alpha \rangle$ be a $\lesst$-increasing sequence of towers, where $\calt^j = \langle M_i^j : i \in I^j \rangle ^\wedge \langle a_i : i \in (I^j)^-\rangle$. Suppose that $\bigcup_{j < \alpha} I^j$ is a well ordered set. Then $\bigcup_{j<\alpha} \calt^j$ is a tower, and for all $j < \alpha$, $\calt^j \lesst \bigcup_{j<\alpha} \calt^j$.
\end{fact}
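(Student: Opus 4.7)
The plan is to verify each clause of Definition \ref{tower-definitions} for $\calt^\alpha := \bigcup_{j<\alpha} \calt^j$, and then each clause of Definition \ref{tower-ordering-definintion} for $\calt^j \lesst \calt^\alpha$. Write $M_i^\alpha$, $I^\alpha$, $a_i$ for the components of $\calt^\alpha$ as in Definition \ref{unions-of-towers}. Without loss of generality $\alpha$ is a limit ordinal, since in the successor case the union collapses to $\calt^{\alpha-1}$. The set $I^\alpha$ is well-ordered by assumption, with order type $<\lambda^+$ (as a $\subseteq$-increasing union of such sets), and for $i \in (I^\alpha)^-$ the successor $i+_{I^\alpha}1$ lies in some $I^j$, so $i \in (I^j)^-$, forcing $a_i \in M_i^j \subseteq M_i^\alpha$.

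The main technical hurdle is showing that each $M_i^\alpha$ is still a $(\lambda, \geq \kappa)$-limit model. Set $j_i = \min\{j<\alpha : i \in I^j\}$. By clause (2) of $\lesst$, the chain $\langle M_i^{j'} : j_i \leq j' < \alpha\rangle$ is $\lek^u$-increasing. When $\cof(\alpha - j_i) \geq \kappa$, this chain directly witnesses the $(\lambda, \geq \kappa)$-limit property. Otherwise, I would pre-pend to it an internal $\lek^u$-witnessing chain for $M_i^{j_i}$ of length with cofinality $\geq \kappa$ and reorganise the result into a single $\lek^u$-increasing witnessing sequence of the correct cofinality whose union is $M_i^\alpha$; this interleaving argument is the main obstacle and is where the real work of the fact lives. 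Once each $M_i^\alpha$ is a $(\lambda, \geq \kappa)$-limit model, the $\lek$-increasing part of $\langle M_i^\alpha : i \in I^\alpha\rangle$ follows from AEC coherence: for $i_1 <_{I^\alpha} i_2$ pick $j$ with both indices in $I^j$ and observe that $M_{i_1}^\alpha = \bigcup_{j' \geq j} M_{i_1}^{j'}$ is a $\lek$-increasing union of models each $\lek M_{i_2}^\alpha$.

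For $\calt^j \lesst \calt^\alpha$, clauses (1) and (3) are immediate from the construction. For clause (2), any $\lambda$-sized $\lek$-extension of $M_i^j$ embeds over $M_i^j$ into $M_i^{j+1}$ by $\lek^u$ and hence into $M_i^\alpha$, giving $M_i^j \lek^u M_i^\alpha$. Clause (4) is where the independence relation enters: iterated transitivity of $\dnf$ along the original $\lesst$-chain yields that $\gtp(a_i/M_i^{j'}, M_{i+1}^{j'})$ does not $\dnf$-fork over $M_i^j$ for every $j' \in [j,\alpha)$, and $\Kkappalims$-universal continuity$^\ast$ (Definition \ref{continuity-star-def}), applied to the $\lek^u$-chain $\langle M_i^{j'} : j \leq j' <\alpha\rangle$ in $\Kkappalims$ (whose union $M_i^\alpha$ lies in $\K_\lambda$ and whose upper bound $M_{i+1}^\alpha$ lies in $\Kkappalims$ by the previous step), lifts this to non-forking of $\gtp(a_i/M_i^\alpha, M_{i+1}^\alpha)$ over $M_i^j$, completing the verification.
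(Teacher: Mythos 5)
This statement is an imported fact --- the paper gives no proof, only the citation to \cite[3.22]{bema} --- so there is nothing to compare against line by line; I can only assess your argument on its own terms. Most of it is sound: the successor-case reduction, the verification that $I^\alpha$ and the $a_i$ behave, the $\lek$-increasingness via the AEC union axioms, clause (2) of $\lesst$ via factoring through $M_i^{j+1}$, and the observation that when the tail chain $\langle M_i^{j'} : j_i \leq j' < \alpha\rangle$ has cofinality $\geq \kappa$ it directly witnesses $M_i^\alpha \in \kkappalims$.

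The genuine gap is exactly the branch you flag and then wave away. If $\cof(\alpha) < \kappa$, prepending an internal $\lek^u$-witnessing chain for $M_i^{j_i}$ of length of cofinality $\geq\kappa$ and concatenating it with the tail produces a chain whose \emph{cofinality is that of the tail}, i.e.\ $<\kappa$; no ``reorganisation'' can repair this, since being a $(\lambda,\delta)$-limit for some $\delta$ of small cofinality does not make a model a $(\lambda,\geq\kappa)$-limit. Indeed in a strictly stable setting the union of a short $\lek^u$-chain of $(\lambda,\geq\kappa)$-limit models need not be a $(\lambda,\geq\kappa)$-limit model at all (compare the first-order fact that unions of chains of saturated models of cofinality $<\kappa(T)$ can fail to be saturated). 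The resolution is that the fact, as its label and every application in this paper indicate (Lemma \ref{reduced-extensions}, Fact \ref{unions-of-reduced-are-reduced}, Fact \ref{full-unions} all take $\cof \geq \kappa$), carries the hypothesis $\cof(\alpha)\geq\kappa$, which the transcription here omits; under that hypothesis your ``main case'' always applies and the problematic branch is vacuous. A smaller point: universal continuity* by itself only yields a \emph{unique} common extension $p \in \gS(M_i^\alpha)$ of the types $\gtp(a_i/M_i^{j'}, M_{i+1}^{j'})$; to conclude that $p$ $\dnf$-does not fork over $M_i^j$ you should invoke $(\geq\kappa)$-universal continuity (Fact \ref{continuity-star-implies-high-continuity}), which again is available precisely because the chain has cofinality $\geq\kappa$. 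Also note that clause (4) of $\lesst$ for each $j' > j$ is immediate from the sequence being $\lesst$-increasing; the iterated-transitivity detour is unnecessary.
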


We can always extend towers, and even arbitrary $\lesst$-increasing chains of towers, with a larger tower.

\begin{fact}\label{tower-chain-extensions}
    Let $\langle \calt^j : j < \alpha \rangle$ be a $\lesst$-increasing sequence of towers, where $\calt^j$ is indexed by $I^j$. Suppose that $\bigcup_{j < \alpha} I^j$ is a well ordered set. Then there exists a strongly $\underhlim$-increasing tower $\calt^\alpha$ indexed by $\bigcup_{j<\alpha} I^j$ such that $\calt^j \lesstrong \calt^\alpha$ for all $j < \alpha$.

    Moreover, if $\calt^j = \langle M_i^j : i \in I^j \rangle ^\wedge \langle a_i : i \in (I^j)^-\rangle$ for $j \leq \alpha$, then for all $i \in I^\alpha$, $\bigcup\{M_i^j : j < \alpha, i \in I^j\} \underhlim M_i^\alpha$.
\end{fact}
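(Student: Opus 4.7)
The plan is to construct $\calt^\alpha$ by taking the union tower $\calt^{<\alpha} := \bigcup_{j<\alpha} \calt^j$ (which is a tower by Fact~\ref{high-cofinality-unions-are-towers}) and then building a $\underhlim$-thickening of it, index by index. Writing $\calt^{<\alpha} = \langle N_i : i \in I^\alpha\rangle^\wedge\langle a_i : i \in (I^\alpha)^-\rangle$ with $I^\alpha := \bigcup_{j<\alpha} I^j$ and $N_i := \bigcup\{M_i^j : j<\alpha,\, i \in I^j\}$, I will construct $\calt^\alpha = \langle M_i^\alpha : i \in I^\alpha\rangle^\wedge\langle a_i : i \in (I^\alpha)^-\rangle$ by transfinite induction on $i \in I^\alpha$ so that at each stage: (a) $N_i \underhlim M_i^\alpha$; (b) $\bigcup_{r<i,\,r\in I^\alpha} M_r^\alpha \underhlim M_i^\alpha$; and (c) for each $r \in (I^\alpha)^-$ with $r+1 \leq i$, $\gtp(a_r/M_r^\alpha, M_{r+1}^\alpha)$ $\dnf$-does not fork over $N_r$.

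For the least element of $I^\alpha$, let $M_{i_0}^\alpha$ be any $\underhlim$-extension of $N_{i_0}$, which exists by Fact~\ref{universal-and-limit-extensions-exist}. At a successor step $i = r+1$ in $I^\alpha$, I apply non-forking amalgamation to $N_r \lek M_r^\alpha$ and $N_r \lek N_{r+1}$, with the distinguished singleton $a_r \in N_{r+1}$ (and any auxiliary singleton from $M_r^\alpha$), to obtain a common extension $N^+$ in which $\gtp(a_r/M_r^\alpha, N^+)$ $\dnf$-does not fork over $N_r$; then let $M_i^\alpha$ be a $\underhlim$-extension of $N^+$, renaming elements so that $M_r^\alpha$ and $N_{r+1}$ sit as literal $\K$-substructures of $M_i^\alpha$. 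At a limit $i \in I^\alpha$, $M^* := \bigcup_{r<i,\,r\in I^\alpha} M_r^\alpha \in \K_\lambda$ and $N_i$ share the common $\K$-substructure $\bigcup_{r<i,\,r\in I^\alpha} N_r$; amalgamate via AP and let $M_i^\alpha$ be a $\underhlim$-extension of the resulting amalgam. Monotonicity of $\dnf$ ensures the non-forking conditions set up at earlier successor steps survive inside the larger $M_i^\alpha$.

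The verifications are then routine. Invariants (a) and (b) directly give the strongly $\underhlim$-increasing property of $\calt^\alpha$ as well as condition (2*) of $\lesstrong$: given $M_i^j \lek N_i \underhlim M_i^\alpha$, one prepends $M_i^j$ to any witnessing $\lek^u$-chain from $N_i$ up to $M_i^\alpha$ (valid because any $\lek^u$-extension of $N_i$ is also $\lek^u$ over any $\K$-submodel of $N_i$ of the same cardinality, using AP), producing a witness for $M_i^j \underhlim M_i^\alpha$. Condition (4) of $\lesst$ over each $M_r^j$ then follows from (c) by uniqueness: the unique non-forking extension of $\gtp(a_r/M_r^j)$ to $\gS(M_r^\alpha)$ restricts on $N_r$ to $\gtp(a_r/N_r)$ (applying uniqueness over $N_r$ to the two non-forking extensions $\gtp(a_r/N_r)$ and that restriction, using the hypothesis $\calt^j \lesst \calt^{<\alpha}$), and hence must coincide with $\gtp(a_r/M_r^\alpha, M_{r+1}^\alpha)$, so this latter type does not fork over $M_r^j$. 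The moreover clause is exactly invariant (a). The main obstacle is simply the bookkeeping at limit steps, but no genuinely new non-forking needs to be verified there: the relevant types already sit inside $M_{r+1}^\alpha \lek M_i^\alpha$ and remain non-forking by monotonicity.
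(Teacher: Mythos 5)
Your construction has a genuine gap at the step that matters most: you begin by declaring $\calt^{<\alpha}=\bigcup_{j<\alpha}\calt^j$ to be a tower via Fact \ref{high-cofinality-unions-are-towers}, and then you repeatedly use its models $N_r$ as \emph{bases} for the independence relation (invariant (c), the non-forking amalgamation step with base $N_r$, and the uniqueness/transitivity arguments "over $N_r$"). But that fact only yields a tower when $\cof(\alpha)\geq\kappa$ (this is exactly the point of the sentence "provided the union is long enough" preceding it): for $i\in I^\alpha$, the model $N_i=\bigcup\{M_i^j : j<\alpha,\ i\in I^j\}$ is the union of a $\lek^u$-increasing chain of order type cofinal in $\alpha$, hence a $(\lambda,\cof(\alpha))$-limit model, and there is no reason for it to lie in $\Kkappalims$ when $\cof(\alpha)<\kappa$. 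Since $\dnf$ is an independence relation on $\Kkappalims$ under Hypothesis \ref{proof-hypothesis}, "does not fork over $N_r$" is then not even defined, non-forking amalgamation cannot be invoked with base $N_r$ (nor with $N_{r+1}$ as one of the sides), and uniqueness and transitivity over $N_r$ are unavailable. This low-cofinality limit case is not a corner case: it is precisely the case for which the statement is needed (it is applied at limit stages of cofinality $<\kappa$ in Lemma \ref{reduced-extensions} and in Proposition \ref{reduced-tower-between-limit-models}); when $\cof(\alpha)\geq\kappa$ or $\alpha$ is a successor the statement is comparatively easy.

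The missing ingredient is $\Kkappalims$-universal continuity* in $\K_\lambda$, which is in the hypotheses exactly to handle this situation and which your argument never uses: for each $r$, the types $\gtp(a_r/M_r^j, M^j_{r+_{I^j}1})$ form an increasing sequence of types over the $\lek^u$-increasing chain $\langle M_r^j : j\rangle$, each non-forking over its own domain (by existence), so continuity* supplies a unique type over $N_r\in\K_\lambda$ extending them all; one then realises that type in a model of $\Kkappalims$ and verifies condition (4) of $\lesst$ by uniqueness/transitivity over the models $M_r^j\in\Kkappalims$ rather than over $N_r$. (For comparison, the paper does not reprove this at all: the first part is quoted from an earlier paper, and only the "moreover" clause is argued, by passing to a further $\lesstrong$-extension $\calt^{\alpha+1}$ and using $\bigcup\{M_i^j\}\lek M_i^\alpha\underhlim M_i^{\alpha+1}$.) Your treatment of condition (2*) of $\lesstrong$ by prepending $M_i^j$ to a witnessing $\lek^u$-chain is fine, since that part genuinely does not require $N_i\in\Kkappalims$; the problem is confined to, but fatal for, the non-forking clauses.
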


\begin{proof}
    The first paragraph is {\cite[3.24]{bema}}. The moreover part is inherent in the construction of {\cite[3.24]{bema}}; or, if you prefer, take strongly $\underhlim$-increasing $\calt^{\alpha + 1}$ such that $\calt^\alpha \lesstrong \calt^{\alpha+1}$ using the first part. Then $\bigcup\{M_i^j : j < \alpha, i \in I^j\} \lek M_i^\alpha \underhlim M_i^{\alpha + 1}$, so you can replace $\calt^\alpha$ with $\calt^{\alpha+1}$.
\end{proof}

Note the $\alpha = 1$ case says we can extend a single tower.

Sometimes we need to ensure that a non-forking type is realised at the bottom of a tower extension.

\begin{fact}[{\cite[3.25]{bema}}]\label{tower-extensions-with-b}
    Let $\calt = \langle M_i : i \in I \rangle ^\wedge \langle a_i : i \in I^-\rangle$ be a tower. Let $p \in \gS(M_0)$. Then there exists a strongly $\underhlim$-increasing tower $\calt' = \langle M_i' : i \in I \rangle ^\wedge \langle a_i : i \in I^-\rangle$ and $b \in M_0'$ such that $\calt \lesstrong \calt'$, $p = \gtp(b/M_0, M_0')$, and $\gtp(b/M_i, M_i')$ $\dnf$-does not fork over $M_0$ for all $i \in I$.
\end{fact}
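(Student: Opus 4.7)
The plan is to construct $\calt'$ together with the realizer $b$ by transfinite induction on $i \in I$, maintaining the invariants: (i) $M_i \underhlim M_i'$ and $\langle M_r' : r \leq i \rangle$ is strongly $\underhlim$-increasing; (ii) $b \in M_0' \subseteq M_i'$ with $\gtp(b/M_i, M_i')$ not $\dnf$-forking over $M_0$; and (iii) $\gtp(a_r/M_r', M_{r+1}')$ does not $\dnf$-fork over $M_r$ for each successor $r+1 \leq i$. For the base case, existence gives that $p$ does not fork over $M_0$; using Fact \ref{universal-and-limit-extensions-exist}, take a $\underhlim$-extension $M_0'$ of $M_0$, which is universal over $M_0$ and hence realises $p$, so pick $b \in M_0'$ realising $p$, and invariants (i)--(iii) hold vacuously at this stage.

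For the successor step $i \mapsto i+1$, apply non-forking amalgamation to $(M_i, M_i', M_{i+1})$ with designated singletons $b \in M_i'$ and $a_i \in M_{i+1}$, identifying the resulting embeddings with inclusions, to produce $N \in \K_\lambda$ with both $\gtp(b/M_{i+1}, N)$ and $\gtp(a_i/M_i', N)$ not $\dnf$-forking over $M_i$. Combining the first of these with the IH (which gives $\gtp(b/M_i, M_i')$, hence $\gtp(b/M_i, N)$ by monotonicity, not forking over $M_0$) via transitivity over $M_0 \lek M_i \lek M_{i+1}$ yields $\gtp(b/M_{i+1}, N)$ not forking over $M_0$. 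Then, using Fact \ref{universal-and-limit-extensions-exist}, choose $M_{i+1}' \in \Kkappalims$ a $\underhlim$-extension of $N$ that is $\underhlim$ over both $M_i'$ and $M_{i+1}$; monotonicity propagates all invariants.

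The main obstacle is the limit step at $\delta \in I$. The natural approach is to set $M_\delta^* = \bigcup_{r<\delta} M_r'$ and, if $\calt$ is discontinuous at $\delta$, use non-forking amalgamation to merge $M_\delta$ with $M_\delta^*$ over $\bigcup_{r<\delta} M_r$ (passing through a $\underhlim$-extension if the latter is not in $\Kkappalims$) inside some $N^* \in \K_\lambda$; then use $\Kkappalims$-universal continuity* together with Fact \ref{continuity-star-implies-high-continuity} to glue the chain of non-forking type-extensions of $p$ along $\langle M_r' : r < \delta \rangle$ into a single non-forking type over $\bigcup_{r<\delta} M_r'$, which extension and uniqueness promote to a non-forking type over $M_\delta$ (realised by $b$) inside an appropriately chosen $\underhlim$-extension $M_\delta' \in \Kkappalims$ of $N^*$ dominating both $M_\delta^*$ and $M_\delta$. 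The delicate points are simultaneously (a) ensuring $M_\delta^*$ can be absorbed into $\Kkappalims$ even when $\cof(\delta) < \kappa$, (b) handling the possible discontinuity of $\calt$ at $\delta$, and (c) ensuring the actual type of $b$ over $M_\delta$ coincides with the abstract non-forking extension — which is where uniqueness of non-forking extensions is essential. The continuity* and non-forking amalgamation hypotheses are precisely what is needed to reconcile these demands.
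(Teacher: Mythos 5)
This statement is quoted as a black-box fact from \cite[3.25]{bema}; the paper under review gives no proof of it, so your attempt can only be judged on its own terms. Your base case and successor step are essentially sound: existence plus universality of $\underhlim$-extensions handles the base, and non-forking amalgamation over $M_i$ followed by transitivity correctly pushes the non-forking of $\gtp(b/M_{i+1},\cdot)$ down to $M_0$ (modulo the standard but non-trivial bookkeeping hidden in ``identifying the resulting embeddings with inclusions'': without disjointness --- which is what this paper is ultimately trying to prove --- you cannot make both amalgamation maps literal inclusions, so strictly you need a coherent directed system that is renamed only at the end).

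The limit step, however, is not a proof but a list of the difficulties followed by the assertion that the hypotheses resolve them, and the sketch you do give has concrete errors. First, the types you control are $\gtp(b/M_r, M_r')$, i.e.\ types over the models of the \emph{original} tower; the chain $\langle M_r : r<\delta\rangle$ is only $\lek$-increasing (Definition \ref{tower-definitions} does not make towers universal chains), so neither universal continuity* nor $(\geq\kappa)$-universal continuity applies to it, while along the primed chain $\langle M_r'\rangle$ there is nothing to glue, since $b\in M_0'\lek M_r'$ and its type over each $M_r'$ is already realised. Passing from ``$\gtp(b/M_r,\cdot)$ does not fork over $M_0$ for all $r<\delta$'' to non-forking over $\bigcup_{r<\delta}M_r$ (let alone over the possibly strictly larger $M_\delta$) therefore requires a continuity principle along a non-universal chain that is not among the hypotheses. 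Second, non-forking amalgamation is only assumed for bases in $\Kkappalims$, and $\bigcup_{r<\delta}M_r$ --- a union of a merely $\lek$-increasing chain of limit models --- need not be a limit model of any cofinality; your parenthetical fix of ``passing through a $\underhlim$-extension'' does not help, because such an extension will not sit inside both $M_\delta$ and $M_\delta^*$ and so cannot serve as the base of their amalgamation. Third, the closing appeal to uniqueness is circular: uniqueness can identify $\gtp(b/M_\delta, N^*)$ with the abstract non-forking extension of $p$ only if you already know that $\gtp(b/M_\delta, N^*)$ does not fork over $M_0$, which is precisely what is to be shown. A correct proof has to reorganise the construction (e.g.\ fixing in advance the coherent system of non-forking extensions $q_i\in\gS(M_i)$ of $p$ and realising it via a coherent directed system built alongside the tower extension, applying extension at $M_\delta\in\Kkappalims$ before amalgamating) rather than taking naive unions and repairing them afterwards.
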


Reduced towers are towers that increase gradually enough that intersecting high models with low models from an extension gives the smallest model possible from the smaller tower.

\begin{definition}\label{reduced-definition}
    A tower $\calt = \langle M_i : i \in I \rangle ^\wedge \langle a_i : i \in I^-\rangle$ is \emph{reduced} if for all towers $\calt' = \langle M_i' : i \in I' \rangle ^\wedge \langle a_i : i \in (I')^-$ such that $\calt \lesst \calt'$, for all $r < i \in I$, we have $M_i \cap M_r' = M_r$.
\end{definition}

Reduced towers are a central concept for our main argument - this `smallest intersection' property will be what ensures our amalgamation is disjoint.

Unions of high cofinality chains of reduced towers are also reduced.

\begin{fact}[{\cite[3.30]{bema}}]\label{unions-of-reduced-are-reduced}
    If $\delta < \lambda^+$ and $\langle \calt^j : j < \delta \rangle$ is a $\lesst$-increasing chain of reduced towers, and $\cof(\delta) \geq \kappa$, then $\bigcup_{j < \delta} \calt^\delta$ is reduced.
\end{fact}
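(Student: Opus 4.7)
The plan is to reduce the reducedness of the union $\calt := \bigcup_{j < \delta} \calt^j$ to the reducedness of the individual $\calt^j$'s. Writing $\calt^j = \langle M_i^j : i \in I^j \rangle^\wedge \langle a_i : i \in (I^j)^- \rangle$ and $\calt = \langle N_i : i \in I \rangle^\wedge \langle a_i : i \in I^- \rangle$, where $I = \bigcup_{j < \delta} I^j$ and $N_i = \bigcup\{M_i^j : j < \delta,\ i \in I^j\}$, first I would invoke Fact \ref{high-cofinality-unions-are-towers}, using the hypothesis $\cof(\delta) \geq \kappa$ to certify that $\calt$ really is a tower: this is the only place the cofinality condition enters, since it is what guarantees each $N_i$ remains a $(\lambda, \geq \kappa)$-limit model via a cofinal subsequence of length $\cof(\delta) \geq \kappa$ of the $\lek^u$-increasing chain $\langle M_i^j : j_0 \leq j < \delta \rangle$ (where $j_0$ is chosen with $i \in I^{j_0}$).

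Next, suppose $\calt \lesst \calt' = \langle N_i' : i \in I' \rangle^\wedge \langle a_i : i \in (I')^- \rangle$; the task is to show $N_i \cap N_r' = N_r$ whenever $r < i$ in $I$. The containment $N_r \subseteq N_i \cap N_r'$ follows immediately from $N_r \lek N_i$ together with $N_r \lek^u N_r'$ (from clause (2) of $\lesst$). For the reverse, fix $x \in N_i \cap N_r'$. Because the index sets are nested along the chain of towers (clause (1) of the definition of $\lesst$), I can pick some $j_0 < \delta$ with $r, i \in I^{j_0}$. Since $N_i$ is the upward-directed union $\bigcup_{j \geq j_0} M_i^j$, I can then pick $j_1 \geq j_0$ with $x \in M_i^{j_1}$. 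Fact \ref{high-cofinality-unions-are-towers} gives $\calt^{j_1} \lesst \calt$, and composing with $\calt \lesst \calt'$ via transitivity of $\lesst$ yields $\calt^{j_1} \lesst \calt'$. The reducedness of $\calt^{j_1}$, applied to this extension and to the pair $r < i$ in $I^{j_1}$, yields $M_i^{j_1} \cap N_r' = M_r^{j_1}$; in particular $x \in M_r^{j_1} \subseteq N_r$, as desired.

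The main obstacle is really only the bookkeeping around the nested index sets $I^j$ and the verification that each $N_i$ is a $(\lambda, \geq \kappa)$-limit; once those two points are in place, the argument is a routine \emph{small-support reflection} to the $\calt^j$'s, since any witness to a failure of reducedness for $\calt$ already lives in some $M_i^{j_1}$ and hence would witness a failure of reducedness for $\calt^{j_1}$, contradicting the hypothesis. No fresh use of the independence relation $\dnf$ is required beyond what is already encoded in the transitivity of $\lesst$.
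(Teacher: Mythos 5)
Your argument is correct, and since the paper quotes this statement as a Fact from \cite[3.30]{bema} without reproducing a proof, there is nothing in-paper to diverge from; your reflection argument (locate $x \in M_i^{j_1}$, use $\calt^{j_1} \lesst \calt \lesst \calt'$ and transitivity of $\lesst$, then apply reducedness of $\calt^{j_1}$) is exactly the standard proof of this fact. The only quibble is cosmetic: Fact \ref{high-cofinality-unions-are-towers} as stated here carries no cofinality hypothesis, so your remark that $\cof(\delta) \geq \kappa$ enters only in certifying that the union is a tower (i.e.\ that each $N_i$ stays a $(\lambda, \geq \kappa)$-limit) is a fair reading of where the hypothesis is genuinely needed, and the rest of your argument indeed never uses it.
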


We can always find a reduced extension of any tower. In fact, by building a long chain of reduced extensions, the union will be a $\lesstrong$-extension.

\begin{lemma}\label{reduced-extensions}
    Let $\calt$ be a tower indexed by $I$. Then there is a reduced tower $\calt'$ indexed by $I$ such that $\calt \lesstrong \calt'$
\end{lemma}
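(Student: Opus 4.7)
The plan is to build a $\lesstrong$-increasing sequence of towers $\langle \calt^j : j \leq \delta \rangle$ indexed by $I$, starting with $\calt^0 = \calt$ and taking $\delta$ to be a limit ordinal with $|\delta| = \lambda$ and $\cof(\delta) \geq \kappa$ (any such $\delta < \lambda^+$ works since $\kappa \leq \lambda$). At successor stages I would use Fact \ref{tower-chain-extensions} to produce $\lesstrong$-extensions, and at limit stages I would take $\lesst$-unions, which are towers by Fact \ref{high-cofinality-unions-are-towers}. The idea is to use the successor stages to absorb potential witnesses to non-reducedness one at a time via a bookkeeping argument, so that $\calt^\delta$ ends up being reduced.

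Concretely, I would fix an enumeration $\langle (r_j, i_j, x_j) : j < \delta \rangle$ of all triples $(r, i, x)$ with $r < i \in I$ and $x \in M_i^k$ for some $k < \delta$. Since $|I| \leq \lambda$ and each tower model has size $\lambda$, there are at most $\lambda$ such triples, matching the $|\delta| = \lambda$ many construction steps. The enumeration must be dovetailed so every triple is enumerated at some stage $j+1$ with $x_j$ already in $M_{i_j}^j$ (for example, by enumerating pairs of the form (stage, triple-available-at-that-stage) in length $\delta$). At stage $j+1$, if there is any $\lesst$-extension $\hat\calt$ of $\calt^j$ with $x_j \in \hat M_{r_j}$, I would pick such a $\hat\calt$ and let $\calt^{j+1}$ be a $\lesstrong$-extension of $\hat\calt$ using Fact \ref{tower-chain-extensions}; otherwise I would take $\calt^{j+1}$ to be any $\lesstrong$-extension of $\calt^j$.

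To see that $\calt^\delta$ is reduced, suppose some $\calt^* \gtrsim \calt^\delta$ witnesses non-reducedness, so there are $r < i \in I$ and $x \in M_i^\delta \cap M_r^*$ with $x \notin M_r^\delta$. Since $\delta$ is a limit, $x \in M_i^{j_0}$ for some $j_0 < \delta$, and by the dovetailed enumeration there is some $j_1 \geq j_0$ with $(r_{j_1}, i_{j_1}, x_{j_1}) = (r, i, x)$. At stage $j_1+1$, the extension $\calt^*$ itself certifies that a suitable $\hat\calt$ exists, so by construction $x \in M_r^{j_1+1} \lek M_r^\delta$, contradicting $x \notin M_r^\delta$. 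Setting $\calt' = \calt^\delta$ gives the lemma.

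The hard part will be making the bookkeeping precise, particularly dovetailing the enumeration so that each triple is processed at some stage \emph{after} its $x$-component has materialised in a model of the chain, while still enumerating all of them in only $\delta$ steps. A secondary subtlety is verifying that $\calt \lesstrong \calt^\delta$ at the end: each successor step is $\lesstrong$, but the limit unions need to be shown to preserve the $\underhlim$-relation with $\calt$, which amounts to a standard concatenation argument on the $\lek^u$-chains witnessing the relevant $(\lambda, \geq \kappa)$-limits.
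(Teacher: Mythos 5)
Your overall strategy is sound but takes a genuinely different route from the paper's. The paper simply cites the external fact that every tower has a reduced $\lesst$-extension with the same index (\cite[3.31]{bema}), iterates it $\kappa$ times in a $\lesst$-increasing chain with reduced towers at successors, and takes the union at stage $\kappa$, invoking Fact \ref{unions-of-reduced-are-reduced} for reducedness and reading off $\lesstrong$ from continuity at $\kappa$. You instead re-prove the existence of reduced extensions from scratch via the standard ``absorb all potential witnesses'' bookkeeping. Your catch-your-tail verification is correct: by transitivity of $\lesst$, the offending $\calt^*$ itself certifies at stage $j_1+1$ that a suitable $\hat\calt$ exists, so the construction already put $x$ into $M_{r}^{j_1+1}$; and the cardinality accounting works since $\operatorname{otp}(I)<\lambda^+$ and each model has size $\lambda$, so only $\lambda$ triples ever materialise. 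This buys self-containedness at the cost of redoing work the paper outsources, and it folds the $\lesstrong$ conclusion into the same induction rather than obtaining it as an afterthought.

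There is, however, one step that fails as written: taking literal unions at \emph{all} intermediate limit stages. If $\cof(j) < \kappa$ (which occurs whenever $\kappa > \aleph_0$, e.g.\ at $j = \omega$), the union $\bigcup_{k<j} M_i^k$ of the $\lek^u$-increasing chain is a $(\lambda, \cof(j))$-limit model over $M_i^0$ but need not be a $(\lambda, \geq \kappa)$-limit model, so $\bigcup_{k<j}\calt^k$ need not be a tower at all (clause (2) of Definition \ref{tower-definitions} fails). Fact \ref{high-cofinality-unions-are-towers} as restated in this paper omits the hypothesis $\cof(\alpha)\geq\kappa$, but its name and the surrounding text (``provided the union is long enough, it will also be a tower'') make clear that the hypothesis is needed, and the paper's own construction respects it. The fix is exactly the paper's device: at limit stages of the chain use Fact \ref{tower-chain-extensions} to produce a single tower extending the whole chain built so far, and reserve the literal union for the final stage $\delta$, where $\cof(\delta)\geq\kappa$ guarantees both that $\calt^\delta$ is a tower with $\calt^j \lesst \calt^\delta$ for all $j$, and that $M_i^0 \underhlim M_i^\delta$ --- the latter directly from the continuous $\lek^u$-increasing chain $\langle M_i^j : j \leq \delta\rangle$, with no concatenation of witnessing chains required. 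Your final-stage verification of reducedness is unaffected by this change, since it only uses that $\calt^\delta$ is the union of the chain and that each $\calt^j \lesst \calt^\delta$.
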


\begin{proof}
    By \cite[3.31]{bema}, we may find a reduced tower $\lesst$-extending any other tower with the same index. Form a $\lesst$-increasing chain of length $\kappa$ of towers $\langle\calt^j : j < \kappa \rangle$ such that $\calt^0 = \calt$ and $\calt^{j+1}$ is reduced for all $j < \kappa$. This is possible by using \cite[3.31]{bema} at successors and Fact \ref{tower-chain-extensions} at limits. Finally, take $\calt^\kappa = \bigcup_{j < \kappa} \calt$. Say $\calt^j = \langle M_i^j : i \in I \rangle ^\wedge \langle a_i : i \in I^-\rangle$ for $j \leq \kappa$. We have that $\calt^\kappa$ is reduced by Fact \ref{unions-of-reduced-are-reduced}, $\calt \lesst \calt^\kappa$ by Fact \ref{unions-of-towers}, and furthermore $\calt \lesstrong \calt'$ since $\langle M_i^j : j < \kappa \rangle$ witnesses that $M_i^\kappa$ is a $(\lambda, \kappa)$-limit model over $M_i^0$. Thus $\calt' = \calt^\kappa$ fulfils our requirements.
\end{proof}

The sequence of models of a reduced tower is continuous at all high cofinality $i \in I$. This was central in the main argument of \cite[\textsection 3]{bema} to create a tower that contained a $(\lambda, \geq\kappa)$-limit model as well as it's witnessing sequence. This idea will come up again in Lemma \ref{building-a-reduced-tower-between-m0-m1}.

\begin{fact}[{\cite[3.35]{bema}}]\label{reduced-implies-high-continuity}
    If $\calt$ is a reduced tower indexed by $I$, and $i \in I$ has $\cof_I(i) \geq \kappa$, then $\calt$ is continuous at $i$.
\end{fact}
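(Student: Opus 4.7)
The plan is to argue by contradiction. Suppose $\calt = \langle M_r : r \in I \rangle ^\wedge \langle a_r : r \in I^- \rangle$ is reduced, $i \in I$ has $\cof_I(i) \geq \kappa$, yet $\calt$ is not continuous at $i$. Set $M^* := \bigcup_{r \in I,\, r < i} M_r$; by assumption $M^* \subsetneq M_i$, so pick $b \in M_i \setminus M^*$. The strategy is to construct $\calt \lesst \calt'$ with index $I$ such that $b \in M_{r_0}'$ for some $r_0 \in I$ with $r_0 < i$; reducedness of $\calt$ then forces $b \in M_i \cap M_{r_0}' = M_{r_0} \subseteq M^*$, contradicting the choice of $b$.

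To select $r_0$, we apply $(\geq \kappa)$-local character to $\gtp(b/M^*, M_i)$. The chain $\langle M_r : r \in I,\, r < i \rangle$ is only $\lek$-increasing in general, so local character does not apply to it directly; however, each $M_r$ is itself a $(\lambda, \geq \kappa)$-limit model, so interleaving the $M_r$'s with $\lek^u$-chains witnessing each $M_r$'s limit-model structure produces a $\lek^u$-increasing continuous chain cofinal in $M^*$ of cofinality $\geq \kappa$. Local character along this chain yields $r_0 \in I$ with $r_0 < i$ such that $\gtp(b/M^*, M_i)$ does not $\dnf$-fork over $M_{r_0}$; by monotonicity, $\gtp(b/M_r, M_i)$ does not $\dnf$-fork over $M_{r_0}$ for every $r \in I \cap [r_0, i)$.

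To build $\calt'$, apply Fact~\ref{tower-extensions-with-b} to the sub-tower of $\calt$ on the indices $\geq r_0$, using the type $p := \gtp(b/M_{r_0}, M_i) \in \gS(M_{r_0})$. This yields a strongly $\lesstrong$-extension $\langle \tilde M_r : r \in I,\, r \geq r_0 \rangle$ with a witness $b' \in \tilde M_{r_0}$ realizing $p$ such that $\gtp(b'/\tilde M_r, \tilde M_i)$ does not $\dnf$-fork over $M_{r_0}$ for each $r \geq r_0$. After amalgamating $M_i$ with $\tilde M_i$ over $M_{r_0}$, both $b$ and $b'$ realize the same non-forking extension of $p$ to every intermediate base $M_r$ ($r_0 \leq r < i$) and to $M^*$, so by uniqueness of the non-forking extension we obtain an isomorphism of a suitable ambient amalgam, fixing $M^*$ and sending $b'$ to $b$. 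Applying this isomorphism to the upper sub-tower relabels each $\tilde M_r$ to some $M_r'$ with $M_r \lek^u M_r'$ and $b \in M_{r_0}'$. Finally, extend the lower piece $\calt \upharpoonright \{r \in I : r < r_0\}$ via Fact~\ref{tower-chain-extensions} and glue to the relabeled upper piece via non-forking amalgamation over $M_{r_0}$, producing the desired $\calt \lesst \calt'$.

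The main obstacle is the relabeling identification of $b'$ with $b$: preserving every $\lek^u$-extension and non-forking-of-singleton clause of $\lesst$ through the isomorphism hinges on uniqueness of non-forking extensions together with non-forking amalgamation, and is the delicate part of the argument. A subsidiary technicality is the interleaving argument that makes $(\geq \kappa)$-local character applicable to a chain that is only $\lek$-increasing in the tower definition; this leans on each $M_r$'s own limit-model structure. Once these two points are handled, the remainder is a routine construction using Facts~\ref{tower-chain-extensions} and \ref{tower-extensions-with-b} together with the assumed properties of $\dnf$.
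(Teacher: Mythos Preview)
The paper does not prove this statement; it is quoted as a fact from \cite[3.35]{bema}. Your overall strategy---assume discontinuity at $i$, pick $b\in M_i\setminus M^*$, and manufacture a $\lesst$-extension $\calt'$ with $b\in M_{r_0}'$ for some $r_0<i$ to contradict reducedness---is exactly the standard one, and is the approach taken in the cited reference.

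There is, however, a real gap in your handling of the ``subsidiary technicality''. You propose to obtain a $\lek^u$-increasing chain cofinal in $M^*$ by interleaving the $M_r$'s with $\lek^u$-chains witnessing that each $M_r$ is a $(\lambda,\geq\kappa)$-limit model. This does not work as stated: the witnessing chain for $M_{r_{j+1}}$ need not pass through (or even be $\lek^u$-comparable with) $M_{r_j}$, since $M_{r_{j+1}}$ is only assumed to be a $(\lambda,\geq\kappa)$-limit model, not a $(\lambda,\geq\kappa)$-limit \emph{over} $M_{r_j}$. So concatenating these witnessing chains does not produce a $\lek^u$-increasing sequence, and you cannot invoke $(\geq\kappa)$-local character on it. Note also that Fact~\ref{bemain} is not available here (its proof in \cite{bema} uses the present fact), so you cannot relocate the witnessing chains via uniqueness of limit models. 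This same obstruction resurfaces in your identification step: to conclude $\gtp(b/M^*,-)=\gtp(b'/M^*,-)$ from agreement over each $M_r$ you implicitly need universal continuity$^*$, which again demands a $\lek^u$-increasing chain. You have correctly located the two delicate points, but reversed their difficulty: the local-character step is the genuine obstacle, not a routine interleaving, and your sketch does not yet bridge it.
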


Full towers are towers that realise non-forking extensions of all types over certain models at regular intervals. By realising enough such types, using Fact \ref{universal-and-limit-extensions-exist} we can ensure a subsequence of models in the tower is universally increasing.

\begin{definition}\label{full-definition}
    Let $\calt = \langle M_i : i \in I \rangle ^\wedge \langle a_i : i \in I^-\rangle$ be a tower and $I_0 \subseteq I$. $\calt$ is \emph{$I_0$-full} if for every $i \in (I_0)^-$ and every $p \in gS(M_i)$, there exists $k \in [i, i+_{I_0}1)_I$ such that $\gtp(a_k/M_k, M_{k+_I1})$ extends $p$ and $\dnf$-does not fork over $M_i$.
\end{definition}

In a strongly $\lek^u$-increasing tower of the correct shape, we can insert intermediate models in such a way that the tower becomes full.

\begin{fact}[{\cite[3.40]{bema}}]\label{full-extensions}
    Let $I$ be a well order, $\alpha<\lambda^+$ a limit ordinal, and $\gamma \in (\alpha, \lambda^+)$ a limit ordinal with $\cof(\gamma) = \lambda$. Suppose $\calt = \langle M_i : i \in I \times \alpha \rangle ^\wedge \langle a_i : i \in I\times \alpha\rangle$ is a strongly $\underhlim$-tower. Then there exists a $I \times \{0\}$-full tower $\calt'$ indexed by $I \times \gamma$ such that $\calt' \upharpoonright (I \times \alpha) = \calt$.
\end{fact}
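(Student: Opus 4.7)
The plan is to build $\calt'$ by extending $\calt$ one column at a time. For $i \in I^-$, the strong $\underhlim$-monotonicity of $\calt$ gives $\bigcup_{j<\alpha} M_{(i, j)} \underhlim M_{(i+_I 1, 0)}$, so $M_{(i+_I 1, 0)}$ is universal over this union, providing room to slot an extended column of length $\gamma$ below it. The two quantitative ingredients are $\cof(\gamma) = \lambda$ (so there are cofinally many positions in $[\alpha, \gamma)$ where we can designate type-realizations) and $\lambda$-stability of $\K$ (so $|\gS(M_{(i, 0)})| \leq \lambda$, giving at most $\lambda$-many types to realize in each column).

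Concretely, fix a continuous strictly increasing sequence of limit ordinals $\langle \beta_\xi : \xi \leq \lambda \rangle$ with $\beta_0 = \alpha$ and $\beta_\lambda = \gamma$, and, for each $i \in I^-$, enumerate $\gS(M_{(i, 0)}) = \{p^i_\xi : \xi < \lambda\}$ (with repeats if $|\gS(M_{(i, 0)})| < \lambda$). Build an external $\lek^u$-increasing chain $\langle N^i_j : j \in [\alpha, \gamma) \rangle$ of $(\lambda, \geq \kappa)$-limit models starting above $\bigcup_{j<\alpha} M_{(i, j)}$ (taking $(\lambda, \geq \kappa)$-limit extensions at limit stages). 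At each $j = \beta_\xi + 1$, take $N^i_j$ to be a $\lek^u$-extension of $N^i_{\beta_\xi}$ realizing the non-forking extension $q^i_\xi \in \gS(N^i_{\beta_\xi})$ of $p^i_\xi$ over $M_{(i, 0)}$ (which exists by the extension property, itself following from the uniqueness and existence of $\dnf$, cf.\ Remark~\ref{existence-transitivity-remark}), and fix a realizing element $b^i_{\beta_\xi} \in N^i_{\beta_\xi+1}$. Then use universality of $M_{(i+_I 1, 0)}$ over $\bigcup_{j<\alpha} M_{(i, j)}$ to embed $\bigcup_{j<\gamma} N^i_j$ into $M_{(i+_I 1, 0)}$ fixing the base, and set $M'_{(i, j)}$ and $a'_{(i, \beta_\xi)}$ to be the images of $N^i_j$ and $b^i_{\beta_\xi}$ respectively; for the intermediate indices $j \in [\alpha, \gamma) \setminus \{\beta_\xi : \xi < \lambda\}$, pick $a'_{(i, j)}$ arbitrarily in $M'_{(i, j+1)}$. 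For $i$ the maximum of $I$ (if any), no embedding is needed: the external chain serves as column $i$ directly.

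The clause $\calt' \upharpoonright (I \times \alpha) = \calt$ is immediate since new data is added only at $(i, j)$ with $j \geq \alpha$ and the embeddings fix the original base. For $I \times \{0\}$-fullness, given $i \in I^-$ and $p \in \gS(M_{(i, 0)})$, find $\xi < \lambda$ with $p = p^i_\xi$; the image of $b^i_{\beta_\xi}$ is $a'_{(i, \beta_\xi)} \in M'_{(i, \beta_\xi + 1)}$ whose type over $M'_{(i, \beta_\xi)}$ is the image of $q^i_\xi$, hence extends $p$ and $\dnf$-does not fork over $M_{(i, 0)}$, so $(i, \beta_\xi) \in [(i, 0), (i+_I 1, 0))_{I \times \gamma}$ witnesses the required clause. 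The global sequence is $\lek$-increasing across columns because, for $i' < i''$ in $I$, column $i'$ sits inside $M_{(i'+_I 1, 0)} \lek M_{(i'', 0)}$.

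The main obstacle is maintaining non-forking control through the column construction, so that the type $q^i_\xi$ of $b^i_{\beta_\xi}$ over $N^i_{\beta_\xi}$ genuinely does not fork over $M_{(i, 0)}$ rather than only over some intermediate model. The immediate step is handled by the choice of $q^i_\xi$ via the extension property; preservation through the $\lek^u$-filler stages of the column chain follows from transitivity (which we have by uniqueness plus existence), and preservation through limit-stage unions of the chain requires $\Kkappalims$-universal continuity* in $\K_\lambda$ to coherently glue the non-forking types. A secondary point — that the embedded column meshes with the rest of $\calt$ — is settled by universality of $M_{(i+_I 1, 0)}$ fixing $\bigcup_{j<\alpha} M_{(i, j)}$, which shifts the whole external construction inside the upper bound without distorting its internal $\lek^u$-structure.
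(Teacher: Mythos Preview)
The paper does not supply its own proof here; the statement is recorded as a Fact with a citation to \cite[3.40]{bema}. Your construction is the natural one and is essentially what the cited source does: exploit strong $\underhlim$-monotonicity to get $\bigcup_{j<\alpha} M_{(i,j)} \underhlim M_{(i+_I 1, 0)}$, build an auxiliary $\lek^u$-chain of $(\lambda,\geq\kappa)$-limit models above this union realizing the required $\lambda$-many non-forking extensions (using $\lambda$-stability to bound $|\gS(M_{(i,0)})|$ and $\cof(\gamma)=\lambda$ to schedule the realizations cofinally), then push the whole column under $M_{(i+_I 1, 0)}$ by universality.

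Two small corrections. First, your justification for the extension step is miscited: Remark~\ref{existence-transitivity-remark} says that uniqueness and \emph{extension} give transitivity, not that uniqueness and existence give extension. What you actually need --- a non-forking extension of $p^i_\xi \in \gS(M_{(i,0)})$ to $N^i_{\beta_\xi}$ --- follows instead from \emph{non-forking amalgamation} (which is part of Hypothesis~\ref{proof-hypothesis}) applied to $M_{(i,0)}$, a model containing a realization of $p^i_\xi$, and $N^i_{\beta_\xi}$. Second, your final paragraph about ``preservation through the $\lek^u$-filler stages'' and ``limit-stage unions'' via continuity$^*$ is superfluous and a little confused: the type $q^i_\xi$ is chosen \emph{once}, at stage $\beta_\xi$, already not forking over $M_{(i,0)}$; there is no inductive maintenance of a non-forking condition along the column, so neither transitivity nor continuity$^*$ enters at this point. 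The argument is cleaner without that paragraph.
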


\begin{remark}
	Again, in removing the $a_i \in |M_{i+1}| \setminus |M_i|$ assumption, the proofs from \cite[3.40]{bema} go through allowing algebraic types.
\end{remark}

Like reduced towers, full towers behave well under unions.

\begin{fact}[{\cite[3.38]{bema}}]\label{full-unions}
    Let $I_0 \subseteq I$ be well orders. If $\langle \calt^j : j < \delta \rangle$ is a sequence of towers indexed by $I$ which are $I_0$ full and $\cof(\delta) \geq \kappa$, then $\bigcup_{j < \delta} \calt^j$ is an $I_0$-full tower.
\end{fact}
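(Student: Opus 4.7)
The plan is to verify the $I_0$-fullness condition on the union tower $\calt^\delta := \bigcup_{j<\delta} \calt^j = \langle M_i^\delta : i \in I \rangle^\wedge \langle a_i : i \in I^- \rangle$ directly, by reducing any given type over $M_i^\delta$ to a stage $\calt^{j^*}$ using $(\geq \kappa)$-local character, invoking the fullness of that stage, and then promoting the resulting witness back up to $\calt^\delta$ via universal continuity, transitivity, and uniqueness.

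Concretely, I fix $i \in (I_0)^-$ and $p \in \gS(M_i^\delta)$, aiming to produce $k \in [i, i+_{I_0} 1)_I$ such that $\gtp(a_k/M_k^\delta, M_{k+_I 1}^\delta)$ extends $p$ and $\dnf$-does not fork over $M_i^\delta$. Since $\cof(\delta) \geq \kappa$ and the chain $\langle M_i^j : j < \delta \rangle$ is $\lek^u$-increasing with union $M_i^\delta$, $(\geq \kappa)$-local character gives $j^* < \delta$ such that $p$ does not fork over $M_i^{j^*}$. Setting $p^* := p \upharpoonright M_i^{j^*}$, the $I_0$-fullness of $\calt^{j^*}$ applied to $p^*$ produces $k \in [i, i+_{I_0} 1)_I$ together with $q := \gtp(a_k/M_k^{j^*}, M_{k+_I 1}^{j^*})$ extending $p^*$ and not forking over $M_i^{j^*}$. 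My claim is that this same $k$ witnesses $I_0$-fullness of $\calt^\delta$ for $p$.

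The main obstacle is transferring the non-forking of the singleton type from $\calt^{j^*}$ to $\calt^\delta$. For each $j$ with $j^* \leq j < \delta$, the tower ordering $\calt^{j^*} \lesst \calt^j$ gives that $\gtp(a_k/M_k^j, M_{k+_I 1}^j)$ does not fork over $M_k^{j^*}$. Since $\langle M_k^j : j^* \leq j < \delta \rangle$ is $\lek^u$-increasing with union $M_k^\delta$ and $\cof(\delta) \geq \kappa$, $(\geq \kappa)$-universal continuity---available via Fact \ref{continuity-star-implies-high-continuity}---then forces $q^\delta := \gtp(a_k/M_k^\delta, M_{k+_I 1}^\delta)$ not to fork over $M_k^{j^*}$. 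Combined with $q$ not forking over $M_i^{j^*}$, transitivity (using $M_i^{j^*} \lek M_k^{j^*} \lek M_k^\delta$) gives that $q^\delta$ does not fork over $M_i^{j^*}$, and base monotonicity then promotes this to non-forking over $M_i^\delta$.

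It remains to verify that $q^\delta$ extends $p$ itself, not merely $p^*$. But both $p$ and $q^\delta \upharpoonright M_i^\delta$ lie in $\gS(M_i^\delta)$, both fail to fork over $M_i^{j^*}$, and both restrict to $p^*$ on $M_i^{j^*}$; uniqueness thus forces them equal, completing the verification that $\calt^\delta$ is $I_0$-full.
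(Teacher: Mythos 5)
The paper does not prove this statement; it is imported verbatim as a Fact from \cite[3.38]{bema}, so there is no in-paper proof to compare against. Your argument is correct and is the natural one given the toolkit the surrounding text provides: localize $p$ to a stage $j^*$ by $(\geq\kappa)$-local character, use $I_0$-fullness of $\calt^{j^*}$ to get the witness $k$, lift the non-forking of $\gtp(a_k/M_k^{j}, M_{k+_I1}^{j})$ over $M_k^{j^*}$ through the chain by $(\geq\kappa)$-universal continuity, combine with transitivity and base monotonicity, and close with uniqueness to see that the lifted type extends $p$ itself. Two small points worth making explicit: the hypothesis must be read as saying the sequence of towers is $\lesst$-increasing (otherwise the union in Definition \ref{unions-of-towers} is not defined), and the $j=j^*$ instance of your claim that $\gtp(a_k/M_k^j, M_{k+_I1}^j)$ does not fork over $M_k^{j^*}$ comes from existence (or base monotonicity applied to non-forking over $M_i^{j^*}$) rather than from the tower ordering; neither affects the argument.
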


Using the facts above, one can carry out the main argument of \cite[\textsection 3]{bema}. We emphasise the hypotheses again.

\begin{fact}[{\cite[3.1]{bema}}]\label{bemain}
    Assume Hypothesis \ref{general-hypothesis}. Suppose $\delta_1, \delta_2 < \lambda^+$ are limit ordinals with $\cof(\delta_1), \cof(\delta_2) \geq \kappa$. If $M, N_1, N_2 \in \K_\lambda$ such that $N_l$ is a $(\lambda, \delta_l)$-limit model over $M$ for $l = 1, 2$, then there exists an isomorphism $f : N_1 \cong N_2$ fixing $M$.

    Furthermore, if $N_1, N_2 \in \K_\lambda$ and $N_l$ is a $(\lambda, \delta_l)$-limit model for $l = 1, 2$, then $N_1 \cong N_2$.
\end{fact}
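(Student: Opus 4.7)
The plan is to reduce to the case where $\delta_1, \delta_2$ are regular cardinals and then construct a single model $N^* \in \K_\lambda$ that is simultaneously a $(\lambda, \delta_1)$-limit and a $(\lambda, \delta_2)$-limit over $M$. Once we have such an $N^*$, the first statement follows immediately from Fact \ref{cofinality-iso}, giving isomorphisms $N_1 \cong N^* \cong N_2$ fixing $M$; the ``furthermore'' part then follows from the second clause of Fact \ref{cofinality-iso}, where JEP in $\K_\lambda$ replaces the common base.

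First I would apply Fact \ref{cofinality-iso} to each of $N_1, N_2$ to assume without loss of generality that $\delta_l = \mu_l$ is a regular cardinal with $\mu_l \geq \kappa$ for $l = 1, 2$. The goal is then to build one model that can be exhibited as a limit of each of these two lengths.

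The heart of the argument is to construct a tower $\calt = \langle M_i : i \in I \rangle ^\wedge \langle a_i : i \in I^- \rangle$ with $I$ of order type $\mu_1 \cdot \mu_2$, $M_0 = M$, which is simultaneously \emph{reduced} and \emph{$\{\mu_1 \cdot \nu : \nu < \mu_2\}$-full}. I would build this by an iteration of length $\kappa$, alternating between applying Fact \ref{full-extensions} (to promote fullness on the target subset) and Lemma \ref{reduced-extensions} (to make the result reduced), and taking unions at the limit stages. The crucial inputs here are Fact \ref{unions-of-reduced-are-reduced} and Fact \ref{full-unions}, which guarantee that both properties survive $\kappa$-indexed unions. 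Some care is required to interleave the two operations so that each is preserved through the entire iteration: reduced extensions may destroy fullness and vice versa, so the correct ordering is essential, as is ensuring that the length-$\mu_1 \cdot \mu_2$ index is preserved and each $M_i$ remains a high-cofinality limit model.

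From such a tower, set $N^* = M_{\mu_1 \cdot \mu_2}$, and extract two witnessing sequences for $N^*$. For the $(\lambda, \mu_2)$-limit witness, consider the sub-sequence $\langle M_{\mu_1 \cdot \nu} : \nu \leq \mu_2 \rangle$: fullness at indices of the form $\mu_1 \cdot \nu$, together with Fact \ref{universal-and-limit-extensions-exist}, gives $\lek^u$-increase between successive terms, while reducedness via Fact \ref{reduced-implies-high-continuity} delivers continuity at indices with cofinality $\geq \kappa$ (the remaining low-cofinality limits $\nu < \mu_2$ can be handled by passing to unions, using that $\mu_1 \cdot \mu_2$ is long enough to absorb them). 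A symmetric extraction from the first $\mu_1$-indexed block gives the $(\lambda, \mu_1)$-limit witness. Hence $N^*$ is simultaneously a $(\lambda, \mu_1)$- and $(\lambda, \mu_2)$-limit model over $M$, and Fact \ref{cofinality-iso} finishes the argument.

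The main obstacle is the orchestration of the tower construction: arranging the $\kappa$-stage inductive interleaving so that the final tower is \emph{both} reduced and $\{\mu_1 \cdot \nu : \nu < \mu_2\}$-full, and then verifying that the two extracted sub-sequences really do witness the limit-model structure despite reducedness only providing continuity at high-cofinality indices. This is precisely the machinery developed in \cite[\textsection 3]{bema} leading to \cite[3.1]{bema}, which under the revised tower definition of Remark \ref{towers-exist} and the independence-relation hypotheses collected in Hypothesis \ref{general-hypothesis} (implying Hypothesis \ref{proof-hypothesis} by Remark \ref{general-hypothesis-implies-proof}) applies in exactly the present setting.
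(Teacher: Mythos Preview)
The paper does not prove this statement itself; it is quoted as a fact from \cite[3.1]{bema}, and the paper only says ``using the facts above, one can carry out the main argument of \cite[\textsection 3]{bema}.'' The closest thing to a proof in the paper is the construction in Proposition~\ref{reduced-tower-between-limit-models}, which explicitly ``mimics the approach of the proof of Fact~\ref{bemain}''. Comparing your sketch to that, your high-level strategy---iterate reduced and full extensions $\kappa$ times and read off two limit-model witnesses from the resulting object---is the right one, but two concrete points are off.

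First, an index of order type $\mu_1\cdot\mu_2$ is too small. Fullness (Definition~\ref{full-definition}) demands that between consecutive $I_0$-points every type over the lower model is realised by some $a_k$; since $|\gS(M_i)|=\lambda$, each $I_0$-gap must contain $\lambda$ many indices. This is why the construction in Proposition~\ref{reduced-tower-between-limit-models} (and in \cite{bema}) uses an index of shape $(\mu+1)\times\lambda\times\alpha$: the middle $\lambda$-factor is not optional.

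Second, and more seriously, your ``symmetric extraction from the first $\mu_1$-indexed block'' cannot witness that $N^*=M_{\mu_1\cdot\mu_2}$ is a $(\lambda,\mu_1)$-limit: that block has union $M_{\mu_1}$, not $N^*$, and since $\cof(\mu_1\cdot\mu_2)=\mu_2$ there is no cofinal $\mu_1$-sequence inside the final tower at all. The second witness does not come from the horizontal direction. In the actual argument one fixes a single regular $\mu\geq\kappa$ and shows every $(\lambda,\mu)$-limit over $M$ is isomorphic over $M$ to a $(\lambda,\kappa)$-limit over $M$; the $(\lambda,\mu)$-witness for the top model $M^{\kappa}_{\mathrm{top}}$ is read off horizontally from the reduced $I_0$-full tower $\calt^\kappa$ (exactly as you describe for your $\mu_2$-witness), while the $(\lambda,\kappa)$-witness is read off \emph{vertically} from the length-$\kappa$ chain $\langle M^{j}_{\mathrm{top}}:j\leq\kappa\rangle$, using that $\calt^j\lesst\calt^{j+1}$ forces $M^{j}_{\mathrm{top}}\lek^u M^{j+1}_{\mathrm{top}}$. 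Prepending $M$ to each chain (as $M\lek M^{0}_{\mathrm{bot}}$ and $M\lek M^{0}_{\mathrm{top}}$) puts both witnesses over the common base $M$, and Fact~\ref{cofinality-iso} finishes. So the ``two directions'' are horizontal versus vertical, not two horizontal slices of the same tower.
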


\section{Main results}

The goal of this section is to prove the main theorem:

\begin{restatable}{theorem}{maintheorem}\label{disjoint-nf-amalgamation}
	Assume Hypothesis \ref{general-hypothesis}. Assume $M_0, M_1, M_2 \in \Kkappalims$, and that $M_0 \lek M_l$ and $a_l \in M_l$ for $l = 1, 2$. Then there exist $N \in \Kkappalims$ and $f_l : M_l \rightarrow N$ fixing $M_0$ for $l = 1, 2$ such that $\gtp(f_l(a_l)/f_{3-l}[M_{3-l}], N)$ $\dnf$-does not fork over $M_0$ and $f_1[M_1] \cap f_2[M_2] = M_0$. That is, $\dnf \upharpoonright \Kkappalims$ satisfies disjoint non-forking amalgamation.
	
	In particular, every $M_0 \in \Kkappalims$ is a disjoint amalgamation base in $\K_\lambda$.
\end{restatable}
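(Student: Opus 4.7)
The plan is to follow the outline from the introduction: convert the problem to amalgamating a long reduced tower---encoding $M_0 \lek M_1$ with $a_1$ placed as its bottom singleton---against the short tower $\langle M_0, M_2 \rangle ^\wedge \langle a_2 \rangle$. Non-forking of the amalgamated types will then read off from the tower ordering $\lesst$, while the reduced property will deliver disjointness of the images.

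First I build a reduced tower $\calt_1$ of length $\delta+1$ whose bottom is $M_0$, whose bottom singleton is $a_1$, and whose top is (up to identification) $M_1$. Fix $\delta < \lambda^+$ with $\cof(\delta) \geq \kappa$. Starting from $\langle M_0 \rangle ^\wedge \langle a_1 \rangle$, I iterate Fact \ref{tower-chain-extensions} (choosing arbitrary singletons and $\underhlim$-extensions) to obtain a strongly $\underhlim$-increasing tower indexed by $[0,\delta]$ with bottom $M_0$ and bottom singleton $a_1$, then invoke Lemma \ref{reduced-extensions} to $\lesstrong$-extend to a reduced tower $\calt_1 = \langle M_i : i \leq \delta\rangle ^\wedge \langle a_i : i < \delta\rangle$. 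Reducedness plus $\cof(\delta) \geq \kappa$ forces $M_\delta = \bigcup_{i<\delta} M_i$ via Fact \ref{reduced-implies-high-continuity}, so $M_\delta$ is a $(\lambda, \delta)$-limit over $M_0$; by Fact \ref{bemain}, $M_\delta \cong M_1$ over $M_0$, and I identify along this isomorphism to take $M_\delta = M_1$.

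Next, let $\calt_2 = \langle M_0, M_2\rangle ^\wedge \langle a_2\rangle$ (a legal tower since $M_2 \in \Kkappalims$) and invoke the tower amalgamation lemma of Subsection \ref{tower-amalgamation}, applied to $(\calt_1, \calt_2)$. This produces a matrix $(N_{j,i})_{j \leq 1,\, i \leq \delta}$ in $\Kkappalims$ with bottom row reproducing $\calt_1$, left column reproducing $\calt_2$ via some $f_2 : M_2 \to N_{1,0}$ fixing $M_0$, and with the top row and the right column being $\lesst$-extensions of $\calt_1$ and $\calt_2$ respectively. Set $N = N_{1,\delta}$ and $f_1 = \operatorname{id}_{M_1}$. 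The $\lesst$-condition on the top row at $i = 0$ makes $\gtp(a_1 / N_{1, 0}, N_{1, 1})$ $\dnf$-non-forking over $M_0$, and monotonicity upgrades this to $\gtp(f_1(a_1)/f_2[M_2], N)$ $\dnf$-non-forking over $M_0$. Symmetrically, the $\lesst$-condition on the right column at $j = 0$ gives $\gtp(f_2(a_2)/f_1[M_1], N)$ $\dnf$-non-forking over $M_0$.

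Finally, because $\calt_1$ is reduced and the top row $\lesst$-extends it, Definition \ref{reduced-definition} at $r = 0$, $i = \delta$ yields $N_{0, \delta} \cap N_{1, 0} = N_{0, 0}$, i.e., $f_1[M_1] \cap f_2[M_2] = M_0$, and the ``in particular'' clause is then immediate. The main technical obstacle will be Step 2: proving the tower amalgamation lemma in sufficient strength that both families of singletons (those of $\calt_1$ running horizontally along each row, and the singleton $a_2$ of $\calt_2$ running vertically through each column) are simultaneously controlled by $\dnf$. I expect this to go by induction along $\calt_1$'s index, using non-forking amalgamation together with uniqueness at successor stages, and using $\Kkappalims$-universal continuity* in $\K_\lambda$ to glue the construction at limit stages---including limits of cofinality below $\kappa$, which is precisely the regime continuity* was introduced to handle in \cite[\textsection 3]{bema}.
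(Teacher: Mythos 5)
Your overall strategy is the paper's: build a reduced tower from $M_0$ up to $M_1$ with $a_1$ as its bottom singleton, amalgamate it against $\langle M_0, M_2\rangle^\wedge\langle a_2\rangle$ via the tower amalgamation proposition, read the two non-forking conclusions off the tower ordering and condition (5) of Proposition \ref{tower-ap}, and get disjointness from reducedness. Steps 2 and 3 are essentially correct modulo the deferred proof of tower amalgamation (which in the $\beta=2$ case reduces to one application of Fact \ref{tower-extensions-with-b} plus an isomorphism argument, as in Lemma \ref{successor-step-for-tower-ap}).

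The genuine gap is in Step 1, at the sentence ``Reducedness plus $\cof(\delta)\geq\kappa$ forces $M_\delta=\bigcup_{i<\delta}M_i$ \dots\ so $M_\delta$ is a $(\lambda,\delta)$-limit over $M_0$.'' By Definition \ref{tower-definitions} a tower is only $\lek$-increasing, and the reduced extension produced by Lemma \ref{reduced-extensions} is a $\lesstrong$-extension \emph{vertically} (each $M_i\underhlim M_i'$), which gives no control over the \emph{horizontal} relation between $M_i'$ and $M_{i+1}'$: knowing $M_i\lek^u M_{i+1}$ and $M_{i+1}\lek^u M_{i+1}'$ only makes $M_{i+1}'$ universal over $M_i$, not over $M_i'$. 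So continuity at $\delta$ (Fact \ref{reduced-implies-high-continuity}) gives you a union of a $\lek$-increasing chain, not a limit model over the bottom; this is exactly the failure mode the paper flags at the start of Subsection \ref{building-a-reduced-tower-between-m0-m1} and why Proposition \ref{reduced-tower-between-limit-models} interleaves \emph{full} extensions (Fact \ref{full-extensions}, which force cofinally many universal steps via Fact \ref{universal-and-limit-extensions-exist}) with reduced extensions along a $\kappa$-chain, so that the union is simultaneously full and reduced. A second, related defect: Lemma \ref{reduced-extensions} replaces \emph{every} model, including the bottom one, by a proper $\underhlim$-extension, so your $\calt_1$ no longer has $M_0$ as its bottom model; you then cannot apply Proposition \ref{tower-ap} to it together with $\langle M_0,M_2\rangle^\wedge\langle a_2\rangle$ (which requires equal bottom models), and the reducedness clause would give intersection equal to the new, larger bottom model rather than $M_0$. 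Repairing this requires the isomorphism juggling of Lemmas \ref{reduced-with-M-on-bottom} and \ref{mini-reduced-under-N} (to produce a reduced two-model tower with $M_0$ literally on the bottom and the correct type for $a_1$, using uniqueness and transitivity of $\dnf$) together with the stitching Lemma \ref{reduced-stitching}.
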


Our argument is loosely inspired by the proof of \cite[5.36]{vas19}. The overall strategy is as follows: we will show that we can form a reduced tower $\calt^1$ with $M_0$ at the bottom and $M_1$ at the top such that the first singleton is $a_1$. We then `amalgamate' this with the tower $\calt^2 = \langle M_0, M_2 \rangle ^\wedge \langle a_2 \rangle$. By `amalgamate', we mean that we find a matrix of models where the leftmost column is given by $\calt^1$ and the bottom row is given by the image of $\calt^2$ under some map $f$ (see Figure \ref{disjoint-ap-main-construction}). In fact, calling the second column $\calt^*$, we will have $\calt^1 \lesstrong \calt^*$. The type of $a_1$ over $f[M_2]$ will not fork over $M_0$ because of the tower ordering, and by amalgamating the towers carefully using a modified version of Fact \ref{tower-extensions-with-b} (Lemma \ref{successor-step-for-tower-ap}), we can make sure that the type of $f(a_2)$ over $M_1$ $\dnf$-does not fork over $M_0$ also. So taking $N = M_{\alpha, 1}$, $f_1:M_1 \rightarrow N$ the inclusion map, and $f_2 = f:M_2 \rightarrow N$, we have the desired properties. In particular, disjointness follows from making the first tower reduced (see Figure \ref{disjoint-ap-main-construction}).

\begin{figure}[!ht]
\centering

\begin{circuitikz}
\tikzstyle{every node}=[font=\normalsize]
\node [font=\normalsize] at (5.75,5.75) {$M_{0, 0}$};
\draw [->, >=Stealth] (5.75,6) -- (5.75,7.25);
\draw [->, >=Stealth] (6.25,5.75) -- (7.5,5.75);
\draw [->, >=Stealth] (6.25,7.75) -- (7.5,7.75);
\draw [->, >=Stealth] (5.75,8) -- (5.75,9.25);
\node [font=\normalsize] at (5.75,7.75) {$M_{1, 0}$};
\draw [->, >=Stealth] (6.25,9.75) -- (7.5,9.75);
\draw [->, >=Stealth] (5.75,10) -- (5.75,11.25);
\node [font=\normalsize] at (5.75,9.75) {$M_{2, 0}$};
\node [font=\normalsize] at (4.75,5.75) {$M_0$};
\node [font=\normalsize] at (5.75,11.75) {$\vdots$};
\node [font=\normalsize] at (4.75,12.5) {$M_1$};
\node [font=\normalsize] at (5.75,12.5) {$M_{\alpha, 0}$};
\node [font=\normalsize] at (8,12.5) {$M_{\alpha, 1}$};
\node [font=\normalsize] at (8,11.75) {$\vdots$};
\draw [->, >=Stealth] (8,10) -- (8,11.25);
\node [font=\normalsize] at (8,9.75) {$M_{2, 1}$};
\draw [->, >=Stealth] (8,8) -- (8,9.25);
\node [font=\normalsize] at (8,7.75) {$M_{1, 1}$};
\draw [->, >=Stealth] (8,6) -- (8,7.25);
\node [font=\normalsize] at (8,5.75) {$M_{0, 1}$};
\draw [->, >=Stealth] (6.25,12.5) -- (7.5,12.5);
\node [font=\normalsize] at (5.75,13.25) {$\calt^1$};
\node [font=\normalsize] at (6.9,13.15) {$\lesstrong$};
\node [font=\normalsize] at (8,13.25) {$\calt^*$};
\node [font=\normalsize] at (5.2,12.5) {$=$};
\node [font=\normalsize] at (5.2,5.75) {$=$};
\node [font=\normalsize] at (5.75,4.75) {$M_0$};
\node [font=\normalsize] at (8,4.75) {$M_2$};
\node [font=\normalsize] at (9.5,5.75) {`$f[\calt^2]$'};
\node [font=\normalsize] at (9.5,4.75) {$\calt^2$};
\node [font=\normalsize, rotate around={90:(0,0)}] at (5.75,5.25) {$=$};
\node [font=\normalsize, rotate around={-90:(0,0)}] at (8,5.25) {$\cong$};
\draw [->, >=Stealth] (6.25,4.75) -- (7.5,4.75);
\node [font=\normalsize] at (8.5,5.25) {$f$};
\node [font=\normalsize] at (5,7.75) {$a_1 \in$};
\node [font=\normalsize] at (8,4) {$a_2$};
\node [font=\normalsize, rotate around={90:(0,0)}] at (8,4.25) {$\in$};
\end{circuitikz}

\caption{The main construction}
\label{disjoint-ap-main-construction}
\end{figure}

The brunt of the work when applying this strategy lies in building the tower $\calt^1$ (in particular, ensuring it is reduced), and making sure the amalgamation of towers has the desired non-forking properties for $a_2$. We can achieve this by mimicking the overall approach from \cite{vas19} and by using the tools already developed for towers in this setting from \cite{bema}.

\begin{remark}
    It is worth making some comparison with \cite[5.36]{vas19}, in particular where our approach and theirs differ.
    \begin{enumerate}
        \item Building the first tower $\calt^1$ with the necessary properties (in particular, being reduced) is more difficult in our context - we summarise the problems and our workarounds at the start of Subsection \ref{building-a-reduced-tower-between-m0-m1}.
        \item \cite{vas19} assumes full categoricity, but while not stated explicitly, it appears the only reason this is used rather than the weaker $\lambda$-superstable $\lambda$-symmetric assumptions of the rest of \cite[\textsection 5]{vas19} is to get that \emph{every} $M \in \K_\lambda$ is an amalgamation base (rather than just $(\lambda, \geq \aleph_0)$-limit models). This was the basis of why Theorem \ref{disjoint-nf-amalgamation} seemed plausible to the author in our more general setting.
        \item In the proof of \cite[5.36]{vas19}, the second tower $\calt'$ ($\calt^2$ in our terminology) is with extra work made to be reduced and of some limit length $\delta^*$, witnessing that $M_2$ is a $(\lambda, \geq \aleph_0)$-limit model over $M_0$. However, since the proof only uses that $\calt$ is reduced, you could instead take $\calt' = \langle M_0, M_2 \rangle ^\wedge \langle a_2 \rangle$ as in our argument, which simplifies the picture.
    \end{enumerate}
\end{remark}

\subsection{Tower amalgamation}\label{tower-amalgamation}

Throughout this subsection, assume Hypothesis \ref{proof-hypothesis} holds for some $\K$ stable in $\lambda \geq \LS(\K)$, $\kappa < \lambda^+$ regular, and $\dnf$ an independence relation on $\Kkappalims$.

We will use the following lemma fairly frequently and without mention.

\begin{lemma}
    If $M \underhlim N$, then there exists $N_0$ such that $M \underhlim N_0 \underhlim N$.
    
    Moreover, if $b \in N$, then $N_0$ can be chosen such that $b \in N_0$.
\end{lemma}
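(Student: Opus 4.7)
The plan is to unpack the definition of $\underhlim$ and rewitness $N$ as a $(\lambda, \gamma)$-limit model over $M$ for an ordinal $\gamma$ of my choosing (of cofinality at least $\kappa$ and sitting below $\lambda^+$), using the uniqueness of limit models up to isomorphism over the base (Fact \ref{cofinality-iso}). I can then split this long witnessing chain at an appropriate point to extract $N_0$. Concretely, $N$ is a $(\lambda, \delta)$-limit over $M$ for some $\delta < \lambda^+$ with $\cof(\delta) \geq \kappa$.

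For the first assertion (ignoring $b$), I would take $\gamma := \delta + \delta$, observe that $\cof(\gamma) = \cof(\delta) \geq \kappa$ and $\gamma < \lambda^+$, apply Fact \ref{universal-and-limit-extensions-exist} to produce a $(\lambda, \gamma)$-limit model over $M$, and use Fact \ref{cofinality-iso} to transport a continuous witnessing sequence for it onto $N$. This yields a continuous $\lek^u$-increasing sequence $\langle M_j : j \leq \delta + \delta \rangle$ with $M_0 = M$ and $M_{\delta + \delta} = N$. Setting $N_0 := M_\delta$, the initial half $\langle M_j : j \leq \delta \rangle$ witnesses $M \underhlim N_0$, and the tail $\langle M_{\delta + j} : j \leq \delta \rangle$ witnesses $N_0 \underhlim N$.

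For the moreover part, the length $\delta + \delta$ is not flexible enough: if $b$ happens to lie deep in the second half, there is no clean way to place $N_0$. Instead I would take $\gamma := \delta \cdot \kappa$, which still satisfies $\cof(\gamma) = \cof(\kappa) = \kappa$ and $\gamma \leq \lambda \cdot \lambda = \lambda < \lambda^+$. Rewitnessing $N$ as a $(\lambda, \delta \cdot \kappa)$-limit via a continuous sequence $\langle M_j : j \leq \delta \cdot \kappa \rangle$, continuity picks out some $j^* < \delta \cdot \kappa$ with $b \in M_{j^*}$. By ordinal division, write $j^* = \delta \cdot \beta + \alpha$ uniquely with $\beta < \kappa$ and $\alpha < \delta$, and set $N_0 := M_{\delta \cdot (\beta + 1)}$. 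Then $b \in M_{j^*} \lek N_0$; the initial segment $\langle M_j : j \leq \delta \cdot (\beta + 1) \rangle$ has length of cofinality $\cof(\delta) \geq \kappa$, so it witnesses $M \underhlim N_0$; and the tail $\langle M_{\delta \cdot (\beta + 1) + j} : j \leq \delta \cdot \kappa \rangle$ (using that $\kappa - (\beta + 1) = \kappa$ since $\kappa$ is a limit cardinal and $\beta + 1 < \kappa$) witnesses $N_0 \underhlim N$, its length having cofinality $\kappa$.

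The main obstacle is the ordinal arithmetic bookkeeping: the length $\delta \cdot \kappa$ is chosen precisely so that ordinal division of any $j^* < \delta \cdot \kappa$ produces a $\beta < \kappa$, leaving both a full copy of $\delta$ before $\delta \cdot (\beta + 1)$ (giving cofinality $\cof(\delta) \geq \kappa$ on the left) and a full copy of $\delta \cdot \kappa$ on the right. The minor technical point that the witnessing sequence may be taken continuous at limits is standard — at each limit stage, replace the next model with the union of the preceding ones, which remains in $\K_\lambda$ and is universal over each predecessor by monotonicity of universality.
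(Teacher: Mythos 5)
Your argument is correct in substance but takes a genuinely different route from the paper. The paper builds a fresh $\underhlim$-increasing chain $\langle M_i : i \leq \kappa \rangle$ of length $\kappa$ over $M$, continuous at $\kappa$, identifies its top with $N$ via the uniqueness of $(\lambda, \geq \kappa)$-limit models (Fact \ref{bemain}), and then simply takes $N_0 = M_1$ (or $M_i$ for $i < \kappa$ large enough that $b \in M_i$, which exists by continuity at $\kappa$). You instead re-witness $N$ by a single long $\lek^u$-increasing chain of carefully chosen length and split it by ordinal division. The paper's proof is shorter and pushes all the work into Fact \ref{bemain}; your first half is actually more elementary, since with $\gamma = \delta + \delta$ the cofinalities match and only the easy Fact \ref{cofinality-iso} is needed, at the price of the ordinal-arithmetic bookkeeping.

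Two points to tighten. First, in the moreover part you have $\cof(\delta \cdot \kappa) = \kappa$, which may be strictly smaller than $\cof(\delta)$; in that case Fact \ref{cofinality-iso} does not identify the $(\lambda, \delta\cdot\kappa)$-limit over $M$ with $N$, and you must invoke Fact \ref{bemain} (available under the standing hypothesis), or else take $\gamma = \delta \cdot \cof(\delta)$ so the cofinalities agree and the same division argument goes through with $\cof(\delta)$ in place of $\kappa$. Second, the justifications ``$\delta\cdot\kappa \leq \lambda\cdot\lambda = \lambda$'' and ``$\kappa$ is a limit cardinal'' are not right as stated: as ordinals $\lambda\cdot\lambda \neq \lambda$ (what you need is $|\delta\cdot\kappa| \leq \lambda$, hence $\delta\cdot\kappa < \lambda^+$), and $(\beta+1)+\kappa = \kappa$ holds because $\kappa$ is an infinite cardinal, hence additively indecomposable, not because it is a limit cardinal. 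Neither issue affects the correctness of the construction.
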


\begin{proof}
    Build a $\underhlim$-increasing sequence $\langle M_0 : i \leq \kappa \rangle$ continuous at $\kappa$ such that $M_0 = M$. By Fact \ref{bemain}, without loss of generality $M_\kappa = N$. Then take $N_0 = M_1$.
    
    For the moreover part, instead take $N_0 = M_i$ for some $i<\kappa$ large enough that $b \in M_i$.
\end{proof}

\begin{lemma}\label{type-ap-lim-isomorphism-lemma}
    Suppose that $M_0, M_1, M_2 \in \K_\lambda$, and $b \in M_1, c \in M_2$, where $M_0 \underhlim M_1$, $M_0 \underhlim M_2$, and $\gtp(b/M_0, M_1) = \gtp(c/M_0, M_2)$. Then there exists an isomorphism $f : M_1 \cong M_2$ fixing $M_0$ such that $f(b) = c$.
\end{lemma}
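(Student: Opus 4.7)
The plan is to exploit the equal Galois types to build an initial partial embedding sending $b$ to $c$, and then promote it to a full isomorphism by a standard back-and-forth along witnessing chains of limit models. Using Fact \ref{bemain}, I may assume without loss of generality that $M_1$ and $M_2$ are $(\lambda, \kappa)$-limit models over $M_0$ (replacing them by isomorphic-over-$M_0$ copies, carrying $b, c$ to elements of the same Galois type). Fix continuous $\lek^u$-increasing witnessing sequences $\langle N_i^l : i \leq \kappa \rangle$ with $N_0^l = M_0$ and $N_\kappa^l = M_l$ for $l = 1, 2$, and pick $i_0 < \kappa$ with $b \in N_{i_0}^1$ and $c \in N_{i_0}^2$.

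For the base case, the goal is to produce a $\K$-embedding $f_0 : N_{i_0}^1 \to N_{i_0+1}^2$ fixing $M_0$ with $f_0(b) = c$. By monotonicity of Galois types, $\gtp(b/M_0, N_{i_0}^1) = \gtp(c/M_0, N_{i_0}^2)$, so by the definition of equal Galois types (using AP) there are $N^* \in \K_\lambda$ and $\K$-embeddings $h_1 : N_{i_0}^1 \to N^*$ and $h_2 : N_{i_0}^2 \to N^*$ fixing $M_0$ such that $h_1(b) = h_2(c)$. After relabelling the universe of $N^*$, I may assume $h_2$ is the inclusion, so $N_{i_0}^2 \lek N^*$. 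By universality of $N_{i_0+1}^2$ over $N_{i_0}^2$, there is a $\K$-embedding $\pi : N^* \to N_{i_0+1}^2$ fixing $N_{i_0}^2$. Setting $f_0 := \pi \circ h_1$, this fixes $M_0$, and $f_0(b) = \pi(h_1(b)) = \pi(h_2(c)) = c$ since $c \in N_{i_0}^2$ and $\pi$ fixes $N_{i_0}^2$.

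Finally, extend $f_0$ to an increasing sequence $\langle f_j : i_0 \leq j \leq \kappa \rangle$ of partial $\K$-embeddings by a standard back-and-forth: at successor stages, alternately amalgamate the next model of the first (resp.\ second) chain over the current partial map using AP and then embed the amalgam into the next model of the second (resp.\ first) chain using universality; at limit stages take unions, which is possible since both chains are continuous. Because both chains have length $\kappa$, after $\kappa$-many steps the union $f := \bigcup_j f_j$ is an isomorphism $M_1 \cong M_2$ fixing $M_0$ and extending $f_0$, hence $f(b) = c$. The main obstacle is the base case: the hypothesis only gives an abstract amalgam rather than a concrete embedding into $M_2$, and the universality of $N_{i_0+1}^2$ over $N_{i_0}^2$ is precisely what converts this abstract amalgam into $f_0$. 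Once $f_0$ is in hand, the back-and-forth is entirely routine.
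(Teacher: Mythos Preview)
Your proof is correct, but it takes a different route from the paper's. You reduce at the outset (via Fact \ref{bemain}) to the case where $M_1, M_2$ are $(\lambda,\kappa)$-limits over $M_0$, manufacture an initial partial embedding $f_0$ sending $b$ to $c$ using type equality plus one step of universality, and then run an explicit back-and-forth along the two witnessing chains to promote $f_0$ to a full isomorphism. This is essentially re-proving Fact \ref{cofinality-iso} with a prescribed seed. The paper instead avoids the back-and-forth entirely: it splits off intermediate models $M_l^0$ with $M_0 \underhlim M_l^0 \underhlim M_l$ containing $b,c$, uses type equality to get an embedding $h_0:M_1^0\to N^0$ over $M_2^0$ with $h_0(b)=c$, extends $h_0$ to an isomorphism $h:M_1\cong N$, and then observes that $N$ and $M_2$ are both $(\lambda,\geq\kappa)$-limits over $M_2^0$, so Fact \ref{bemain} gives $g:N\cong M_2$ fixing $M_2^0$; the composite $g\circ h$ is the desired map. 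The paper's version is shorter and treats Fact \ref{bemain} as a black box applied over the intermediate model $M_2^0$, whereas your version is more hands-on and makes the construction of the isomorphism fully explicit. Both are fine; the paper's is a bit slicker, while yours has the virtue of showing exactly where each ingredient (type equality, universality, regularity of $\kappa$) enters.
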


\begin{proof}
    Since $M_0 \underhlim M_1$, there exists $M^0_1$ such that $M_0 \underhlim M^0_1 \underhlim M_1$ and $b \in M^0_1$. Similarly, as $M_0 \underhlim M_2$ there exists $M^0_2$ such that $M_0 \underhlim M^0_2 \underhlim M_2$ and $c \in M^0_2$.

    By type equality, there exist $N^0 \in \K_\lambda$ such that $M^0_2 \lek N^0$ and $h_0 : M_1^0 \rightarrow N^0$ such that $h_0(b) = c$. Take some $M_1^1 \in \K$ and $h_1 : M_1^1 \cong N^0$ an isomorphism extending $h_0$. Since $M_0^1 \underhlim M$, without loss of generality we can assume $M_0^1 \lek M_1^1 \underhlim M$ (`push' $M_1^1$ under the first model in a sequence witnessing $M_0^1 \underhlim M$). Now extend $h_1$ to an isomorphism $h : M_1 \cong N$ for some $N \in \K$. Note that $N$ and $M_2$ are both $(\lambda, \geq \kappa)$-limit models over $M_2^0$, so there exists $g:N \cong M_2$ fixing $M_2^0$ by Fact \ref{bemain}. We have constructed the system in Figure \ref{tp-ap-lim-isomorphism}.

    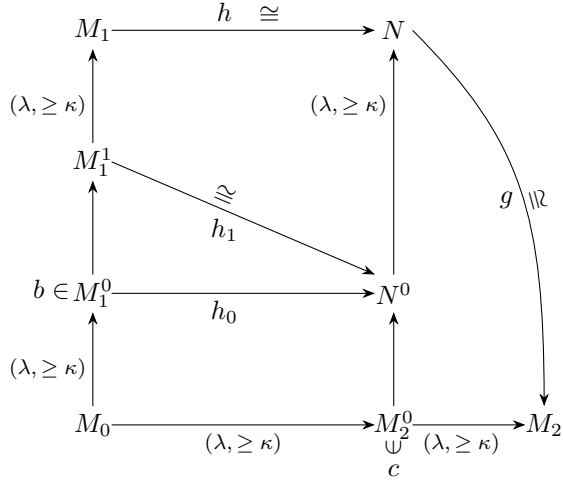
\begin{figure}[!ht]
\centering

\begin{circuitikz}
\tikzstyle{every node}=[font=\normalsize]
\node [font=\normalsize] at (3,8) {$M_0$};
\draw [->, >=Stealth] (3,8.25) -- (3,9.5);
\draw [->, >=Stealth] (3.25,8) -- (6.75,8);
\node [font=\normalsize] at (3,9.75) {$M_1^0$};
\draw [->, >=Stealth] (3,10) -- (3,11.25);
\draw [->, >=Stealth] (3.25,9.75) -- (6.75,9.75);
\node [font=\normalsize] at (3,11.5) {$M_1^1$};
\draw [->, >=Stealth] (3,11.75) -- (3,13);
\draw [->, >=Stealth] (3.25,11.5) -- (6.75,10);
\node [font=\normalsize] at (7,8) {$M_2^0$};
\draw [->, >=Stealth] (7,8.25) -- (7,9.5);
\draw [->, >=Stealth] (7.25,8) -- (8.75,8);
\node [font=\normalsize] at (7,9.75) {$N^0$};
\draw [->, >=Stealth] (7,10) -- (7,13);
\node [font=\normalsize] at (3,13.25) {$M_1$};
\node [font=\normalsize] at (9,8) {$M_2$};
\draw [->, >=Stealth] (3.25,13.25) -- (6.75,13.25);
\node [font=\normalsize] at (7,13.25) {$N$};
\draw [->, >=Stealth] (7.25,13.25) .. controls (8.75,11.75) and (9,10.75) .. (9,8.25) ;
\node [font=\normalsize] at (2.45,9.8) {$b \in$};
\node [font=\normalsize] at (7,7.4) {$c$};
\node [font=\normalsize, rotate=90] at (7,7.7) {$\in$};
\node [font=\normalsize] at (4.75,9.5) {$h_0$};
\node [font=\normalsize] at (4.75,10.6) {$h_1$};
\node [font=\normalsize, rotate = 337] at (4.75,11.05) {$\cong$};
\node [font=\normalsize] at (4.75,13.5) {$h$};
\node [font=\normalsize] at (5.35,13.45) {$\cong$};
\node [font=\normalsize] at (8.5,11) {$g$};
\node [font=\normalsize, rotate=285] at (8.9,11) {$\cong$};
\node [font=\footnotesize] at (5,7.75) {$(\lambda, \geq \kappa)$};
\node [font=\footnotesize] at (7.9,7.75) {$(\lambda, \geq \kappa)$};
\node [font=\footnotesize] at (2.4,8.75) {$(\lambda, \geq \kappa)$};
\node [font=\footnotesize] at (2.4,12.25) {$(\lambda, \geq \kappa)$};
\node [font=\footnotesize] at (6.4,12.25) {$(\lambda, \geq \kappa)$};
\end{circuitikz}

\caption{The construction from Lemma \ref{type-ap-lim-isomorphism-lemma}.}

\label{tp-ap-lim-isomorphism}
\end{figure}

    Let $f = g \circ h : M_1 \cong M_2$. Since $g$ and $h$ both fix $M_0$, $f$ does also, and $f(b) = g(h(b)) = g(c) = c$ since $g$ fixes $M_2^0$.
\end{proof}

\begin{lemma}\label{successor-step-for-tower-ap}
    Let $\calt = \langle M_i : i < \alpha \rangle ^\wedge \langle a_i : i < \alpha^-\rangle$ be a tower, let $N_0$ be a $(\lambda, \geq \kappa)$-limit model over $M_0$, and let $b \in N_0$. Then there exist a tower $\calt' = \langle M_i' : i < \alpha \rangle ^\wedge \langle a_i : i < \alpha^-\rangle$ and an isomorphism $f:N_0 \cong M_0'$ fixing $M_0$ such that $\calt \lesst \calt'$ and $\gtp(f(b)/M_i, M_i') \dnf$-does not fork over $M_0$.
\end{lemma}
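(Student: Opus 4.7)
The plan is to apply Fact \ref{tower-extensions-with-b} with the type $p := \gtp(b/M_0, N_0)$, and then use Lemma \ref{type-ap-lim-isomorphism-lemma} to transfer the realization of $p$ back to $N_0$ via an isomorphism.

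More concretely, first I would set $p = \gtp(b/M_0, N_0) \in \gS(M_0)$. Applying Fact \ref{tower-extensions-with-b} to $\calt$ and $p$, I obtain a strongly $\underhlim$-increasing tower $\calt' = \langle M_i' : i < \alpha \rangle^\wedge \langle a_i : i < \alpha^- \rangle$ together with some $b' \in M_0'$ such that $\calt \lesstrong \calt'$ (hence $\calt \lesst \calt'$), $\gtp(b'/M_0, M_0') = p$, and $\gtp(b'/M_i, M_i')$ $\dnf$-does not fork over $M_0$ for every $i < \alpha$.

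Now $M_0 \underhlim N_0$ by hypothesis, and $M_0 \underhlim M_0'$ because $\calt \lesstrong \calt'$. Since $\gtp(b/M_0, N_0) = p = \gtp(b'/M_0, M_0')$, Lemma \ref{type-ap-lim-isomorphism-lemma} yields an isomorphism $f : N_0 \cong M_0'$ fixing $M_0$ with $f(b) = b'$. Then $\gtp(f(b)/M_i, M_i') = \gtp(b'/M_i, M_i')$ $\dnf$-does not fork over $M_0$ for every $i < \alpha$, so $\calt'$ and $f$ are as required.

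There is no real obstacle here: this lemma is essentially repackaging Fact \ref{tower-extensions-with-b} so that the realizing singleton lives inside the prescribed limit extension $N_0$ rather than the fresh $M_0'$ built by the fact. Lemma \ref{type-ap-lim-isomorphism-lemma} is precisely what lets us swap these two limit extensions while preserving the realization of $p$, so the only subtlety is checking that $M_0 \underhlim M_0'$ to invoke it, which is immediate from $\calt \lesstrong \calt'$.
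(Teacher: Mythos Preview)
Your proposal is correct and follows essentially the same approach as the paper: apply Fact \ref{tower-extensions-with-b} with $p = \gtp(b/M_0, N_0)$ to obtain $\calt'$ and $b' \in M_0'$, then invoke Lemma \ref{type-ap-lim-isomorphism-lemma} (using $M_0 \underhlim M_0'$ from $\calt \lesstrong \calt'$) to produce the isomorphism $f$ with $f(b) = b'$. Your write-up is in fact slightly more explicit than the paper's about why the hypotheses of Lemma \ref{type-ap-lim-isomorphism-lemma} are met.
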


\begin{proof}
    Using Fact \ref{tower-extensions-with-b}, take some $\calt' = \langle M_i' : i < \alpha \rangle ^\wedge \langle a_i : i < \alpha^-\rangle$ with $b' \in M_0'$ such that $\calt \lesstrong \calt'$ and $\gtp(b'/M_i, M_i')$ $\dnf$-does not fork over $M_0$ and extends $\gtp(b/M_0, M_0')$ for all $i < \alpha$. By Lemma \ref{type-ap-lim-isomorphism-lemma} there exists $f : N_0 \cong M_0'$ fixing $M_0$ such that $f(b) = b'$, as required.
\end{proof}

The following generalises \cite[5.32]{vas19}. We will only need it in the case where $\beta = 2$ when proving Theorem \ref{disjoint-nf-amalgamation}. While restricting to $\beta = 2$ would arguably simplify the presentation of the proof a little, we will use arbitrary $\beta$ as this may be useful in other contexts.

\begin{proposition}[Tower amalgamation]\label{tower-ap}
    Given towers $\calt = \langle M_i : i < \alpha \rangle ^\wedge \langle a_i : i < \alpha^-\rangle$ and $\calt' = \langle N_j : j < \beta \rangle ^\wedge \langle b_j : j < \beta^-\rangle$ such that $M_0 = N_0$ and $\calt'$ is $\underhlim$-strongly increasing, there exist $\langle M_{i, j} : i < \alpha, j < \beta \rangle$ in $\Kkappalims$ and $\langle f_j : j < \beta\rangle$ an increasing sequence of isomorphisms $f_j : N_j \cong M_{0, j}$ such that
    \begin{enumerate}
        \item $M_{i, 0} = M_i$ and $f_0 = \operatorname{id}_{N_0}$
        \item for all $i < \alpha$, $\langle M_{i, j}:j < \beta \rangle$ is $\underhlim$-increasing
        \item for all $j < \beta$, $\langle M_{i, j} : i < \alpha \rangle$ is $\lek$-increasing
        \item for all $i < \alpha^-$ and $j < \beta$, $\gtp(a_i/M_{i, j}, M_{i+1, j})$ $\dnf$-does not fork over $M_{i, 0}$
        \item for all $j < \beta^-$ and $i < \alpha$, $\gtp(f_{j+1}(b_j)/M_{i, j}, M_{i, j+1})$ $\dnf$-does not fork over $M_{0, j}$
    \end{enumerate}

    Furthermore, if $\calt$ is reduced, then we can also ensure that for every $i < k < \alpha$ and every $j < \beta$, $M_{i, j} \cap M_{k, 0} = M_{i, 0}$.
\end{proposition}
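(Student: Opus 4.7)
The plan is to build the matrix and isomorphisms by induction on $j \leq \beta$, maintaining at each stage a tower $\calt^j := \langle M_{i,j} : i < \alpha \rangle ^\wedge \langle a_i : i < \alpha^-\rangle$ and an isomorphism $f_j : N_j \cong M_{0,j}$, such that the chain $\langle \calt^{j'} : j' \leq j \rangle$ is $\lesstrong$-increasing and $f_j$ extends each $f_{j'}$ for $j' < j$. The base case $\calt^0 := \calt$ and $f_0 := \operatorname{id}_{N_0}$ is immediate.

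At a successor stage $j+1$, I first extend $f_j$ formally: since $f_j : N_j \cong M_{0,j}$ and $N_j \underhlim N_{j+1}$, relabelling elements of $N_{j+1} \setminus N_j$ yields some $\tilde{M}$ with $M_{0,j} \underhlim \tilde{M}$ and an isomorphism $\tilde{f} : N_{j+1} \cong \tilde{M}$ extending $f_j$. Setting $b := \tilde{f}(b_j) \in \tilde{M}$, I apply Lemma \ref{successor-step-for-tower-ap} to $\calt^j$ with the extension $M_{0,j} \underhlim \tilde{M}$ and element $b$; this yields a tower $\calt^{j+1}$ with $\calt^j \lesstrong \calt^{j+1}$ (the underlying construction via Fact \ref{tower-extensions-with-b} is strongly $\underhlim$-increasing) and an isomorphism $h : \tilde{M} \cong M_{0,j+1}$ fixing $M_{0,j}$ such that $\gtp(h(b)/M_{i,j}, M_{i,j+1})$ $\dnf$-does not fork over $M_{0,j}$ for every $i < \alpha$. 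Define $f_{j+1} := h \circ \tilde{f}$; this extends $f_j$ and witnesses condition (5) at stage $j+1$.

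At a limit stage $\delta \leq \beta$, first form $\calt^\delta_- := \bigcup_{j < \delta} \calt^j$, which is a tower by Fact \ref{high-cofinality-unions-are-towers}, with bottom model $M_{0,\delta}^- := \bigcup_{j<\delta} M_{0,j}$, and let $f_\delta^- := \bigcup_{j<\delta} f_j$, an isomorphism from $\bigcup_{j<\delta} N_j$ onto $M_{0,\delta}^-$. Then use Fact \ref{tower-chain-extensions} to obtain a tower $\calt^\delta$ with $\calt^{j'} \lesstrong \calt^\delta$ for all $j' < \delta$, and in particular $M_{0,\delta}^- \underhlim M_{0,\delta}$. Separately, since $\calt'$ is strongly $\underhlim$-increasing we have $\bigcup_{j<\delta} N_j \underhlim N_\delta$, which transports across $f_\delta^-$ by formal relabelling to produce $\tilde{M}$ with $M_{0,\delta}^- \underhlim \tilde{M}$ and an isomorphism $\tilde{f}_\delta : N_\delta \cong \tilde{M}$ extending $f_\delta^-$. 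Since both $M_{0,\delta}$ and $\tilde{M}$ are $(\lambda, \geq \kappa)$-limit extensions of $M_{0,\delta}^-$, Fact \ref{bemain} provides an isomorphism $g : \tilde{M} \cong M_{0,\delta}$ fixing $M_{0,\delta}^-$; set $f_\delta := g \circ \tilde{f}_\delta$, which extends every $f_{j'}$ since $g$ fixes $M_{0,\delta}^- \supseteq f_{j'}[N_{j'}]$.

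Conditions (1)--(3) hold by construction and the $\lesstrong$-increasing structure of the chain. Condition (4) follows at each successor step by combining the non-forking of $\gtp(a_i/M_{i,j+1}, M_{i+1,j+1})$ over $M_{i,j}$ (from $\calt^j \lesst \calt^{j+1}$) with the inductive hypothesis of non-forking over $M_{i,0}$, via transitivity of $\dnf$. For the \emph{furthermore}, since $\calt^0 = \calt$ is reduced and $\calt^0 \lesst \calt^j$ for every $j < \beta$ (both towers indexed by $\alpha$), Definition \ref{reduced-definition} directly yields $M_{k,0} \cap M_{i,j} = M_{i,0}$ for $i < k < \alpha$. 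The main obstacle will be the limit step, where $\bigcup_{j<\delta} N_j$ is generally a proper $\underhlim$-substructure of $N_\delta$; aligning the bottom model of the union-tower with $N_\delta$ requires the strong $\underhlim$-increasing hypothesis on $\calt'$ together with the limit-model uniqueness supplied by Fact \ref{bemain}.
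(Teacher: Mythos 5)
Your proposal is correct and follows essentially the same route as the paper's proof: the same induction on $j$, Lemma \ref{successor-step-for-tower-ap} at successors, and Fact \ref{tower-chain-extensions} together with Fact \ref{bemain} at limits, with the furthermore clause read off from $\calt^0 \lesst \calt^j$ and reducedness. Your limit step is slightly more explicit than the paper's (transporting $N_\delta$ across $f_\delta^-$ before invoking uniqueness of limit models), but this is the same argument.
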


\begin{proof}
	We build by induction a $\lesstrong$-increasing sequence $\calt^{(j)}$ such that 
    \begin{enumerate}
        \item $\calt^{(0)} = \calt$
        \item $\calt^{(j)} = \langle M_{i, j} : i < \alpha \rangle ^\wedge \langle a_i : i < \alpha^-\rangle$ for all $j < \beta$
        \item for all $j < \beta$, $f_j : N_j \cong M_{0, j}$ fixing $N_0$
        \item for all $j < \beta^-$ and $i < \alpha$ we have $\gtp(f_{j+1}(b_j)/M_{i, j}, M_{i, j+1})$ $\dnf$-does not fork over $M_{0, j}$.
    \end{enumerate}

    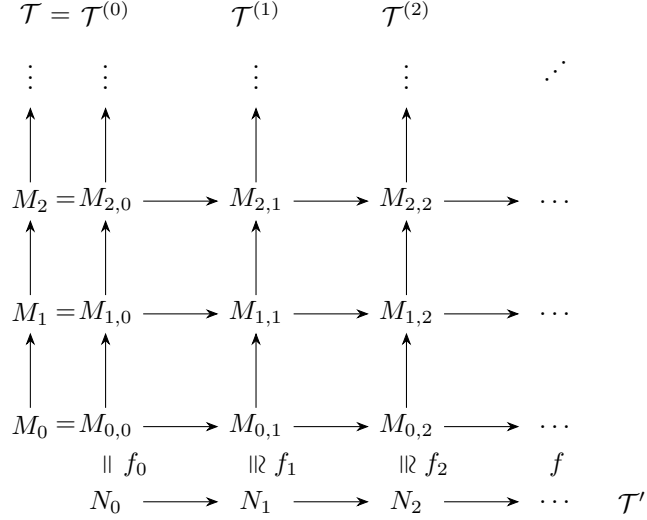
\begin{figure}[!ht]
\centering

\begin{circuitikz}
\tikzstyle{every node}=[font=\normalsize]
\node [font=\normalsize] at (4,5.75) {$M_{0, 0}$};
\draw [->, >=Stealth] (4,6) -- (4,7);
\draw [->, >=Stealth] (4.5,5.75) -- (5.5,5.75);
\draw [->, >=Stealth] (6,6) -- (6,7);
\draw [->, >=Stealth] (6.5,5.75) -- (7.5,5.75);
\node [font=\normalsize] at (6,5.75) {$M_{0, 1}$};
\draw [->, >=Stealth] (8,6) -- (8,7);
\draw [->, >=Stealth] (8.5,5.75) -- (9.5,5.75);
\node [font=\normalsize] at (8,5.75) {$M_{0, 2}$};
\draw [->, >=Stealth] (8,7.5) -- (8,8.5);
\draw [->, >=Stealth] (8.5,7.25) -- (9.5,7.25);
\node [font=\normalsize] at (8,7.25) {$M_{1, 2}$};
\draw [->, >=Stealth] (4,7.5) -- (4,8.5);
\draw [->, >=Stealth] (6,7.5) -- (6,8.5);
\node [font=\normalsize] at (4,7.25) {$M_{1, 0}$};
\draw [->, >=Stealth] (4.5,7.25) -- (5.5,7.25);
\node [font=\normalsize] at (6,7.25) {$M_{1, 1}$};
\draw [->, >=Stealth] (6.5,7.25) -- (7.5,7.25);
\draw [->, >=Stealth] (8,9) -- (8,10);
\draw [->, >=Stealth] (8.5,8.75) -- (9.5,8.75);
\node [font=\normalsize] at (8,8.75) {$M_{2, 2}$};
\draw [->, >=Stealth] (4,9) -- (4,10);
\draw [->, >=Stealth] (6,9) -- (6,10);
\node [font=\normalsize] at (4,8.75) {$M_{2, 0}$};
\draw [->, >=Stealth] (4.5,8.75) -- (5.5,8.75);
\node [font=\normalsize] at (6,8.75) {$M_{2, 1}$};
\draw [->, >=Stealth] (6.5,8.75) -- (7.5,8.75);
\node [font=\normalsize] at (3,5.75) {$M_0$};
\node [font=\normalsize] at (3,7.25) {$M_1$};
\draw [->, >=Stealth] (3,6) -- (3,7);
\draw [->, >=Stealth] (3,7.5) -- (3,8.5);
\node [font=\normalsize] at (3,8.75) {$M_2$};
\draw [->, >=Stealth] (3,9) -- (3,10);
\node [font=\normalsize] at (3,10.5) {$\vdots$};
\node [font=\normalsize] at (4,10.5) {$\vdots$};
\node [font=\normalsize] at (6,10.5) {$\vdots$};
\node [font=\normalsize] at (8,10.5) {$\vdots$};
\node [font=\normalsize, rotate=45] at (10,10.5) {$\dots$};
\node [font=\normalsize] at (10,7.25) {$\dots$};
\node [font=\normalsize] at (10,5.75) {$\dots$};
\node [font=\normalsize] at (10,8.75) {$\dots$};
\node [font=\normalsize, rotate=270] at (4,5.25) {$=$};
\node [font=\normalsize] at (4.4,5.25) {$f_0$};
\node [font=\normalsize, rotate=270] at (6,5.25) {$\cong$};
\node [font=\normalsize] at (6.4,5.25) {$f_1$};
\node [font=\normalsize, rotate=270] at (8,5.25) {$\cong$};
\node [font=\normalsize] at (8.4,5.25) {$f_2$};
\node [font=\normalsize] at (10,5.25) {$f$};
\node [font=\normalsize] at (4,4.75) {$N_0$};
\node [font=\normalsize] at (6,4.75) {$N_1$};
\node [font=\normalsize] at (8,4.75) {$N_2$};
\draw [->, >=Stealth] (4.5,4.75) -- (5.5,4.75);
\draw [->, >=Stealth] (6.5,4.75) -- (7.5,4.75);
\draw [->, >=Stealth] (8.5,4.75) -- (9.5,4.75);
\node [font=\normalsize] at (10,4.75) {$\dots$};
\node [font=\normalsize] at (4,11.25) {$\calt^{(0)}$};
\node [font=\normalsize] at (6,11.25) {$\calt^{(1)}$};
\node [font=\normalsize] at (8,11.25) {$\calt^{(2)}$};
\node [font=\normalsize] at (11,4.75) {$\calt'$};
\node [font=\normalsize] at (3,11.25) {$\calt$};
\node [font=\normalsize] at (3.47,8.75) {$=$};
\node [font=\normalsize] at (3.47,7.25) {$=$};
\node [font=\normalsize] at (3.47,5.75) {$=$};
\node [font=\normalsize] at (3.4,11.2) {$=$};
\end{circuitikz}

\caption{The tower amalgamation construction in Proposition \ref{tower-ap}.}

\label{tower-ap-construction}
\end{figure}

    This is enough to guarantee the desired conditions hold. In particular, if $\calt = \calt^{(0)}$ is reduced, for every $j<\beta$ we have that for every $i < k < \alpha$, $M_{i, j} \cap M_{k, 0} = M_{i, 0}$ since $\calt^{(0)} \lesst \calt^{(j)}$.

    Now we proceed to the construction. For $j = 0$, $\calt^{(0)} = \calt$ and $f_0 = \operatorname{id}_{N_0}$ are given. For limit $j < \beta$, by Fact \ref{tower-chain-extensions}, we may take $\calt^{(j)}$ to be any $\lesstrong$ extension of all $\langle \calt^{(k)} : k < j \rangle$ such that $\bigcup_{k < j} M_{i, k} \underhlim M_{i, j}$ for all $i < \alpha$. Since $\bigcup_{k<j} N_k \underhlim N_j$, $\bigcup_{k<j} M_{0, k} \underhlim M_{0, j}$, and $\bigcup_{k < j} f_k : \bigcup_{k<j} N_k \cong \bigcup_{k<j} M_{0, k}$, by Fact \ref{bemain} there is an isomorphism $f_j : N_j \cong M_{0, j}$ extending $\bigcup_{k < j} f_k$, which still fixes $M_{0, 0}$.

    For successors, suppose $j < \beta^-$, and $\calt^{(j)}$ and $f_j$ are defined. Take some $\bar{M}_{0, j+1}$ and an isomorphism $\bar{f}_{j+1} : N_{j+1} \cong \bar{M}_{0, j+1}$ extending $f_j$. By applying Lemma \ref{successor-step-for-tower-ap} to $\calt^{(j)}$ and $M_{0, j} \underhlim \bar{M}_{0, j+1}$, there exist $M_{0, j+1} \in \K$ and some $g_j : \bar{M}_{0, j+1} \cong M_{0, j+1}$ fixing $M_{0, j}$ such that $\gtp(g_j(\bar{f}_{j+1}(b_j))/M_{i, j}, M_{i, j+1})$ $\dnf$-does not fork over $M_{0, j}$. Finally, take $f_{j+1} = g_j \circ \bar{f}_{j+1}$. Since $\bar{f}_{j+1}$ extends $f_j$ and $g$ fixes $M_{0, j}$, $f_{j+1} \supseteq f_j$. This completes the construction, and the proof.
\end{proof}

\begin{remark}
	The statement holds true for $\calt'$ not strongly $\underhlim$-increasing, provided that we weaken condition (2) to hold only for $i \in [1, \alpha)$. To see this, take $\calt'' = \langle N_j'' : i < \beta \rangle ^\wedge \langle b_j : j < \beta^-\rangle$ where $\calt' \lesst \calt''$ and $\calt''$ is strongly $\underhlim$-increasing by Fact \ref{tower-chain-extensions}. Let $\calt''_0$ be $\calt''$ but with the bottom model replaced by $M_0 = N_0$. $\calt''_0$ is still strongly $\underhlim$-increasing, so we can apply the theorem to $\calt$ and $\calt''$. Replacing $M_{0, j}$ by $f_j[N_j]$ and $f_j$ by $f_j \upharpoonright N_j$, the resulting matrix $\langle M_{i, j} : i < \alpha, j < \beta \rangle$ and function $\langle f_j : j < \beta \rangle$ work. It is not difficult to see that conditions (1), (3), and (4) still hold. (2) still holds for $i \neq 0$ as we have not altered the $M_{i, j}$ for $i \neq 0$. Finally, since $\gtp(b_j/N_j'', N_{j+1}'')$ $\dnf$-does not fork over $N_j$, $\gtp(f_{j+1}(b_j)/M_{0, j}, M_{0, j+1})$ $\dnf$-does not fork over $f_j[N_j]$, so transitivity guarantees that $\gtp(f_{j+1}(b_j)/M_{i, j}, M_{i, j+1})$ $\dnf$-does not fork over $f_j[N_j]$, meaning that condition (5) of the Proposition statement is preserved. The `further' part still holds since $M_0 = M_{0, 0}$ is still a substructure of $f_j[N_j]$ since each $f_j$ extends $\operatorname{id}_{N_0}$.
\end{remark}

\subsection{Building a reduced tower between $M_0$ and $M_1$}\label{building-a-reduced-tower-between-m0-m1}

Throughout this subsection, assume Hypothesis \ref{proof-hypothesis} holds for some $\K$ stable in $\lambda \geq \LS(\K)$, $\kappa < \lambda^+$ regular, and $\dnf$ an independence relation on $\Kkappalims$.

The argument of \cite[Lemma 5.36]{vas19} often makes use of the fact that $\calt = \langle M_i : i < \alpha \rangle ^\wedge \langle a_i : i < \alpha^- \rangle$ is reduced if and only if $\calt$ is continuous and $\langle M_i, M_{i+1}\rangle^\wedge \langle a_i \rangle$ is reduced for all $i < \alpha^-$. In our setting we lose this equivalence since towers may not be continuous at all limits, so we have a harder time building the reduced tower $\calt$ with $M_0$ at the bottom, $M_1$ at the top, and with first singleton $a_1$. We get around the issue in this Subsection by
\begin{enumerate}
    \item finding a 2-indexed reduced tower with $M_0$ at the bottom and singleton $a_1$ (Lemma \ref{mini-reduced-under-N})
    \item mimicking the proof of \cite[3.1]{bema} to build a reduced tower that is `full' enough to contain a sequence witnessing that the top element is a $(\lambda, \geq \kappa)$-limit model, which we can make into $M_1$ using Fact \ref{bemain} (the main construction in Proposition \ref{reduced-tower-between-limit-models})
    \item `stitching' these two towers together to get a reduced tower of the desired form (Lemma \ref{reduced-stitching}).
\end{enumerate}

\begin{lemma}\label{reduced-with-M-on-bottom}
    For any $M \in \Kkappalims$ and $p \in \gS(M)$, there exists $N \in \Kkappalims$ and $a \in N$ such that $\langle M, N \rangle ^\wedge \langle a \rangle$ is reduced and $\gtp(a/M, N) = p$.
\end{lemma}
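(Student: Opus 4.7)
The main obstacle is that Lemma \ref{reduced-extensions} produces a reduced extension $\calt \lesstrong \calt'$ whose bottom model is strictly $\underhlim$-above the original bottom, so we cannot use it directly to construct a reduced 2-tower starting at $M$. Instead the strategy is to build a reduced 2-tower above $M$, then transport it back along an isomorphism $g : M^1 \cong M$ which fixes a suitably chosen $M^-$ furnished by $(\geq \kappa)$-local character, and use uniqueness of non-forking to verify that the resulting type is exactly $p$.

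First I would use the fact that universal extensions realise every Galois type to pick $N_0 \in \Kkappalims$ with $M \underhlim N_0$ and $a \in N_0$ realising $p$, forming the tower $\calt^0 = \langle M, N_0 \rangle ^\wedge \langle a \rangle$. Applying Lemma \ref{reduced-extensions} gives a reduced tower $\calt^1 = \langle M^1, N^1 \rangle ^\wedge \langle a \rangle$ with $\calt^0 \lesstrong \calt^1$; in particular $M \underhlim M^1$, $N_0 \underhlim N^1$, and $\gtp(a/M^1, N^1)$ $\dnf$-does not fork over $M$.

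Next I would apply $(\geq \kappa)$-local character to a continuous chain witnessing $M$ as a $(\lambda, \geq \kappa)$-limit model to produce $M^- \underhlim M$ over which $p$ does not fork. Since $M^- \underhlim M \underhlim M^1$, Fact \ref{bemain} supplies an isomorphism $g : M^1 \cong M$ fixing $M^-$, which extends to an isomorphism $g^+ : N^1 \cong N$ for some $N \in \Kkappalims$ with $M \lek N$ (obtained by relabelling the elements of $|N^1| \setminus |M^1|$ onto fresh symbols and transferring the $\K$-structure). Setting $a' := g^+(a) \in N$, the tower $\langle M, N \rangle ^\wedge \langle a' \rangle$ is reduced because reducedness is preserved under isomorphism (both $\lesst$ and the intersection condition are invariant).

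Finally I would verify $\gtp(a'/M, N) = p$. By transitivity of non-forking (available from uniqueness and extension, cf.\ Remark \ref{existence-transitivity-remark}), the fact that $\gtp(a/M, N^1) = p$ does not fork over $M^-$ together with $\gtp(a/M^1, N^1)$ not forking over $M$ yields that $\gtp(a/M^1, N^1)$ does not fork over $M^-$. Applying invariance under $g^+$ and using that $g^+$ fixes $M^-$ pointwise, we see that $\gtp(a'/M, N)$ does not fork over $M^-$ and agrees with $p$ on $M^-$. Since $p$ also does not fork over $M^-$ by choice, uniqueness gives $\gtp(a'/M, N) = p$. The delicate point is this type-matching at the end, and it is precisely the use of $(\geq \kappa)$-local character to select $M^-$ that makes uniqueness applicable.
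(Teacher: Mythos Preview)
Your proposal is correct and follows essentially the same approach as the paper's own proof: build a reduced $\lesstrong$-extension of a 2-tower realising $p$, use $(\geq\kappa)$-local character to find $M^- \underhlim M$ over which $p$ does not fork, pull the reduced tower back along an isomorphism fixing $M^-$ furnished by Fact~\ref{bemain}, and conclude via transitivity and uniqueness that the transported type equals $p$. The only differences are cosmetic (notation and the direction in which the isomorphism is written).
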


\begin{proof}
    Say $p = \gtp(b/M, N_0)$, where $N_0 \in \Kkappalims$. By Fact \ref{reduced-extensions}, there exists a reduced tower $\langle M', N' \rangle ^\wedge \langle b \rangle$ with $\langle M, N_0 \rangle ^\wedge \langle b \rangle \lesstrong \langle M', N' \rangle ^\wedge \langle b \rangle$.
    
    Using local character on a $\underhlim$-increasing sequence witnessing $M \in \Kkappalims$, there is $M_0 \underhlim M$ such that $p$ $\dnf$-does not fork over $M_0$. Since then $M_0 \underhlim M'$ also, there is an isomorphism $f:M \cong M'$ fixing $M_0$ by Fact \ref{bemain}. Extend this to an isomorphism $g:N \cong N'$ for some model $N$, and take $a = g^{-1}(b)$. Note $N \in \K$ and $M \lek N$. Then the tower $\langle M, N \rangle ^\wedge \langle a \rangle$ is the image of the tower $\langle M', N' \rangle ^\wedge \langle b \rangle$ under $g^{-1}$, so it is reduced. 
    
    As $\langle M, N_0 \rangle ^\wedge \langle b \rangle \lesstrong \langle M', N' \rangle ^\wedge \langle b \rangle$, $\gtp(b/M', N')$ $\dnf$-does not fork over $M$. Since $\gtp(b/M, N)$ $\dnf$-does not fork over $M_0$, by transitivity $\gtp(b/M', N')$ $\dnf$-does not fork over $M_0$. Now, $\gtp(a/M, N) = g^{-1}(\gtp(b/M', N'))$, which (since $g$ fixes $M_0$) is an extension of $p \upharpoonright M_0$ that $\dnf$-does not fork over $M_0$ by invariance. Since this is true for $p$ also, by uniqueness of $\dnf$, we have that $\gtp(a/M, N) = p$ as desired.
\end{proof}

\begin{lemma}\label{mini-reduced-under-N}
    If $M \underhlim N$ and $b \in N$, then there exists $N_0\in \Kkappalims$ such that $M \lek N_0 \underhlim N$ and $\langle M, N_0 \rangle ^\wedge \langle b \rangle$ is reduced.
\end{lemma}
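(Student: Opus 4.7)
The plan is to build an abstract reduced tower over $M$ using Lemma \ref{reduced-with-M-on-bottom} and then transport it inside $N$ via an isomorphism furnished by Lemma \ref{type-ap-lim-isomorphism-lemma}. First I would apply Lemma \ref{reduced-with-M-on-bottom} to $M$ with $p = \gtp(b/M, N)$, yielding $N^\sharp \in \Kkappalims$ and $a \in N^\sharp$ such that $\langle M, N^\sharp \rangle ^\wedge \langle a \rangle$ is reduced and $\gtp(a/M, N^\sharp) = p$. Second, I would apply the ``moreover'' part of the first (unlabelled) lemma of Subsection \ref{tower-amalgamation} to the hypothesis $M \underhlim N$ with $b \in N$, producing $N_0 \in \Kkappalims$ with $M \underhlim N_0 \underhlim N$ and $b \in N_0$; by monotonicity of Galois types, $\gtp(b/M, N_0) = \gtp(b/M, N) = p$.

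I would then apply Lemma \ref{type-ap-lim-isomorphism-lemma} with base $M$, models $N^\sharp$ and $N_0$, and distinguished elements $a \in N^\sharp$ and $b \in N_0$, obtaining an isomorphism $h : N^\sharp \cong N_0$ fixing $M$ with $h(a) = b$. Applying $h$ to the reduced tower $\langle M, N^\sharp \rangle ^\wedge \langle a \rangle$ gives $\langle M, N_0 \rangle ^\wedge \langle b \rangle$, which is reduced by invariance under isomorphism. Combined with $M \lek N_0 \underhlim N$ from the construction of $N_0$, this $N_0$ is exactly the model required by the lemma.

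The main obstacle, and where I expect the substantive work to lie, is verifying the hypothesis $M \underhlim N^\sharp$ of Lemma \ref{type-ap-lim-isomorphism-lemma}, since Lemma \ref{reduced-with-M-on-bottom} in its stated form only guarantees $N^\sharp \in \Kkappalims$ with $M \lek N^\sharp$. To secure this I plan to look inside the proof of Lemma \ref{reduced-with-M-on-bottom}: by choosing the initial model realising $p$ to be a $\underhlim$-extension of $M$ (possible by stability in $\lambda$ and Fact \ref{universal-and-limit-extensions-exist}), transitivity of $\underhlim$ gives $M \underhlim N'$ for the top model $N'$ of the reduced extension. Then, tracking through the constructed isomorphism $g : N^\sharp \cong N'$, which fixes the base $M_{00} \underhlim M$ over which $p$ does not fork, and combining with Fact \ref{bemain} applied to $M$ and $N^\sharp$ over the common base $M_{00}$, one can upgrade this to $M \underhlim N^\sharp$.
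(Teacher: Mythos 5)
Your overall architecture (build an abstract reduced two\nobreakdash-element tower realising $\gtp(b/M,N)$ via Lemma \ref{reduced-with-M-on-bottom}, then transport it into $N$) is the same as the paper's, but your transport mechanism has a genuine gap exactly at the point you flag: Lemma \ref{type-ap-lim-isomorphism-lemma} needs $M \underhlim N^\sharp$, and your proposed repair does not deliver it. Choosing the initial realisation of $p$ to be a $\underhlim$-extension of $M$ does give $M \underhlim N'$ for the top model $N'$ of the reduced $\lesstrong$-extension $\langle M', N'\rangle {}^\wedge \langle b\rangle$. But the relabelling isomorphism $g : N^\sharp \cong N'$ sends $M$ to $M'$, not to $M$; so pulling $M \underhlim N'$ back along $g^{-1}$ only yields $g^{-1}[M] \underhlim N^\sharp$, where $g^{-1}[M]$ is in general a proper $\lek$-submodel of $M$ (it contains only $M_{00}$, the fixed base). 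What you actually need is $M' \underhlim N'$, i.e.\ that the top of the reduced extension is a limit model over its \emph{bottom}, and nothing in Lemma \ref{reduced-extensions} or the $\lesstrong$-ordering gives this: $\lesstrong$ controls $M \underhlim M'$ and $N_0 \underhlim N'$ separately but says nothing about the pair $(M', N')$. The final appeal to Fact \ref{bemain} over $M_{00}$ only produces \emph{an isomorphism} $M \cong N^\sharp$; being isomorphic to $M$ over $M_{00}$ is far from being a limit model \emph{over} $M$ (indeed $M$ itself satisfies the former and not the latter), so the claimed ``upgrade'' is unjustified.

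The paper sidesteps this by never asking for an isomorphism onto a prescribed $N_0 \underhlim N$. It fixes a strongly $\underhlim$-increasing chain $\langle N_i^* : i < \kappa\rangle$ witnessing $M \underhlim N$, arranged so that $b \in N_1^*$. Type equality then gives a mere $\K$-\emph{embedding} $h : N' \rightarrow \bar N$ over $M$ with $h(a) = b$ for some $\bar N \geq_{\K} N_1^*$, and universality of $N_2^*$ over $N_1^*$ pushes $\bar N$ into $N_2^*$ fixing $N_1^*$ (hence fixing $b$). The image $N_0 = (g\circ h)[N']$ of the reduced tower under an embedding is still reduced by invariance, and $N_0 \underhlim N$ comes for free from $N_0 \lek N_2^*$ together with the tail $\langle N_i^* : i > 2\rangle$ of the witnessing chain. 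If you want to keep your isomorphism-based route, you would first have to strengthen Lemma \ref{reduced-with-M-on-bottom} to produce $N^\sharp$ with $M \underhlim N^\sharp$, which requires a new argument; the embedding route avoids the issue entirely.
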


\begin{proof}
    Let $\langle N_i^* : i < \kappa \rangle$ be a strongly $\underhlim$-increasing sequence witnessing that $N$ is a $(\lambda, \kappa)$-limit over $M$, with $N_0^* = M$ (we can assume $N$ is a $(\lambda, \kappa)$-limit by Fact \ref{bemain}). By removing all $N_i^*$ for $i \in [1, \kappa)$ which do not contain $b$, we can assume without loss of generality that $b \in N_1^*$.

    By applying Lemma \ref{reduced-with-M-on-bottom} to $M$ and $\gtp(b/M, N)$, there is $N' \in \Kkappalims$ with $M \lek N'$ such that $\langle M, N' \rangle ^\wedge \langle a\rangle$ is reduced and $\gtp(b/M, N') = \gtp(a/M, N_1^*)$. By type equality, there is some $\bar{N} \in \Kkappalims$ where $N_1^* \lek \bar{N}$ and some $h : N' \rightarrow \bar{N}$ fixing $M$ such that $h(a) = b$. 
    
    Since $N_1^* \underhlim N_2^*$, there is $g : \bar{N} \rightarrow N_2^*$ fixing $N_1^*$. Let $f = g \circ h : N' \rightarrow N$. Then $\langle M, f[N'] \rangle ^\wedge \langle b \rangle$ is reduced (as the image of the reduced tower $\langle M, N'\rangle ^\wedge \langle a \rangle$ under $f$), and $M \lek f[N'] \lek N_2^*\underhlim N$, so $N_0 = f[N']$ is as desired.
\end{proof}

We will only use the following when $\beta = 1$, but we phrase as generally as possible. It essentially says that we may `glue' reduced towers on top of each other, provided they have some overlap, and the resultant tower will be reduced.

\begin{lemma}\label{reduced-stitching}
    Suppose $\calt = \langle M_i : i <\alpha \rangle ^\wedge \langle a_i : i <\alpha^-\rangle$ is a tower, where $\alpha \geq \beta$. If $\calt \upharpoonright (\beta+1)$ is reduced and $\calt \upharpoonright [\beta, \alpha)$ is reduced, then $\calt$ is reduced.
\end{lemma}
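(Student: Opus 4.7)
The plan is to fix an arbitrary tower $\calt'$ with $\calt \lesst \calt'$ and verify the reduced condition $M_i \cap M_r' = M_r$ for every $r < i$ in $\alpha$, by case analysis on where $r$ and $i$ sit relative to $\beta$.

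The preliminary observation, which is a routine check, is that $\calt' \upharpoonright (\beta+1)$ and $\calt' \upharpoonright [\beta, \alpha)$ are themselves towers and the tower ordering $\lesst$ is inherited by restriction, yielding $\calt \upharpoonright (\beta+1) \lesst \calt' \upharpoonright (\beta+1)$ and $\calt \upharpoonright [\beta, \alpha) \lesst \calt' \upharpoonright [\beta, \alpha)$. This depends on the successor operations in $\alpha$, $\beta+1$, and $[\beta, \alpha)$ agreeing wherever they are jointly defined, and on the fact that each restricted sequence of models is still $\lek$-increasing and made of $(\lambda, \geq \kappa)$-limit models.

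Given this, the cases $r < i \leq \beta$ and $\beta \leq r < i$ are immediate from the hypothesised reducedness of $\calt \upharpoonright (\beta+1)$ and $\calt \upharpoonright [\beta, \alpha)$ respectively. The only genuine case is $r < \beta < i$, which I would handle by chasing an arbitrary element $a \in M_i \cap M_r'$ through the middle model $M_\beta'$: since $r < \beta$ in the index of $\calt'$, we have $M_r' \lek M_\beta'$, so $a \in M_i \cap M_\beta'$, and reducedness of $\calt \upharpoonright [\beta, \alpha)$ applied to the pair $\beta, i$ forces $a \in M_\beta$; then $a \in M_\beta \cap M_r'$, and reducedness of $\calt \upharpoonright (\beta+1)$ applied to the pair $r, \beta$ forces $a \in M_r$. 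The reverse inclusion $M_r \subseteq M_i \cap M_r'$ is trivial, closing the case.

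The main (and really only) obstacle is the bookkeeping check that restrictions of towers and of $\lesst$ behave as expected; once that is in hand, the argument is a short diagram chase that uses both reducedness hypotheses precisely at the overlap index $\beta$, which is what justifies calling this a \emph{stitching} lemma.
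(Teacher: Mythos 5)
Your proposal is correct and follows essentially the same route as the paper: split into the cases $r < i \leq \beta$, $\beta \leq r < i$, and $r < \beta < i$, and in the last case pass through $M_\beta'$ to apply the two reducedness hypotheses in succession (the paper writes this as the chain of set equalities $M_i \cap M_r' = M_i \cap M_\beta' \cap M_r' = M_\beta \cap M_r' = M_r$, which is your element chase). The preliminary check that $\lesst$ is inherited by restriction to the intervals $[0,\beta]$ and $[\beta,\alpha)$ is indeed the only bookkeeping point, and it goes through as you indicate since the successor operations on these intervals agree with that of $\alpha$ wherever needed.
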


\begin{proof}
    Suppose $\calt \lesst \calt'$ for some tower $\calt' = \langle M_i' : i <\alpha \rangle ^\wedge \langle a_i : i <\alpha^-\rangle$. We must check that $M_j \cap M_i' = M_i$ for all $i < j < \alpha$. If $j \leq \beta$ then this follows from $\calt \upharpoonright (\beta+1)$ being reduced and $\calt\upharpoonright(\beta+1) \lesst \calt'\upharpoonright(\beta+1)$. If $i \geq \beta$ it follows from $\calt \upharpoonright [\beta, \alpha)$ being reduced and $\calt\upharpoonright[\beta, \alpha) \lesst \calt'\upharpoonright[\beta, \alpha)$. 
    
    So suppose $i < \beta < j$. Then $M_j \cap M_i' = M_j \cap M_\beta' \cap M_i' = M_\beta \cap M_i' = M_i$ (where the last two equalities follow from $\calt \upharpoonright [\beta, \alpha)$ and $\calt \upharpoonright (\beta + 1)$ being reduced respectively). In all cases $M_j \cap M_i' = M_i$, so $\calt$ is reduced as claimed.
\end{proof}

\begin{proposition}\label{reduced-tower-between-limit-models}
    Suppose $M_0 \underhlim M_1$ and $b \in M_1$. Then there exists $\alpha < \lambda^+$ a limit with $\cof(\alpha) = \kappa$ and a tower $\calt = \langle N_i : i \in I \rangle ^\wedge \langle a_i : i \in I^- \rangle$ for some well order $I$ with $\operatorname{otp}(I) = \gamma+1$ for some limit $\gamma$ with $\cof(\gamma) \geq \kappa$, where $I$ has minimal element $i_0$ and final element $i_1$, such that $\calt$ is reduced and $N_{i_0} = M_0$, $N_{i_1} = M_1$, and $b = a_{i_0}$.
\end{proposition}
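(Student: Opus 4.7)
The plan follows the three-step outline at the start of Subsection \ref{building-a-reduced-tower-between-m0-m1}. First, I apply Lemma \ref{mini-reduced-under-N} to $M_0 \underhlim M_1$ and $b \in M_1$ to extract $N_0^* \in \Kkappalims$ with $M_0 \lek N_0^* \underhlim M_1$ and such that the two-model tower $\calt_0 = \langle M_0, N_0^* \rangle ^\wedge \langle b \rangle$ is reduced. This handles the bottom two rungs and the first singleton in one stroke.

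The main step is to build a reduced tower $\calt^* = \langle N_i : i \leq \gamma \rangle ^\wedge \langle a_i : i < \gamma \rangle$ with $N_0 = N_0^*$, where $\gamma < \lambda^+$ is a limit ordinal with $\cof(\gamma) \geq \kappa$ and $N_\gamma$ is a $(\lambda, \geq \kappa)$-limit model over $N_0^*$. I mimic the construction used in \cite[\textsection 3]{bema} to prove Fact \ref{bemain}: starting from $\langle N_0^* \rangle$, first use Fact \ref{tower-chain-extensions} (at limits) and $\underhlim$-extensions (at successors) to produce a strongly $\underhlim$-increasing tower indexed by some $\alpha < \lambda^+$; then apply Fact \ref{full-extensions} to extend to a full tower whose underlying index has order type $\gamma$ for a suitable limit $\gamma$ of cofinality $\lambda$; then use Lemma \ref{reduced-extensions} to pass to a reduced $\lesstrong$-extension indexed by $\gamma + 1$. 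Fact \ref{reduced-implies-high-continuity} forces continuity of the resulting tower at $\gamma$, so $N_\gamma = \bigcup_{i < \gamma} N_i$, while fullness combined with Fact \ref{universal-and-limit-extensions-exist} yields a cofinal $\underhlim$-increasing subsequence, making $N_\gamma$ a $(\lambda, \geq \kappa)$-limit over $N_0^*$. By Fact \ref{bemain}, there is then an isomorphism $f : N_\gamma \cong M_1$ fixing $N_0^*$; pushing $\calt^*$ through $f$ gives a reduced tower $\calt^\dagger$ indexed by $\gamma + 1$, with bottom $N_0^*$ and top $M_1$.

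Finally, I concatenate $\calt_0$ with $\calt^\dagger$ by identifying the two copies of $N_0^*$. The result is a tower $\calt$ indexed by a well-order $I$ of order type $\gamma + 1$, with minimum $i_0$ mapped to $M_0$, second element mapped to $N_0^*$, and maximum $i_1$ mapped to $M_1$. The two-element initial segment is reduced by Step 1 and the tail starting at $N_0^*$ is reduced by Step 2, so Lemma \ref{reduced-stitching} (applied with $\beta = 1$) concludes that $\calt$ itself is reduced. All required properties ($N_{i_0} = M_0$, $N_{i_1} = M_1$, $a_{i_0} = b$, $\cof(\gamma) \geq \kappa$) are then immediate from the construction.

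The main obstacle is the middle step: verifying that the reduced, full tower produced really has $N_\gamma$ as a $(\lambda, \geq \kappa)$-limit over $N_0^*$. Continuity at $\gamma$ is immediate from Fact \ref{reduced-implies-high-continuity}, but extracting a genuinely $\underhlim$-increasing cofinal subsequence (rather than only $\lek^u$-increasing) requires choosing the block structure of the full tower carefully, so that between consecutive `block boundaries' one realises enough types along a chain of length $\geq \kappa$ to invoke Fact \ref{universal-and-limit-extensions-exist}. Once this is handled, everything else is bookkeeping with indices and standard applications of the tower machinery.
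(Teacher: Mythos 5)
Your first and last steps (extracting $N_0^*$ via Lemma \ref{mini-reduced-under-N}, and gluing via Lemma \ref{reduced-stitching}) match the paper's proof. The gap is in your middle step. You propose a single pass: build a strongly $\underhlim$-increasing tower, apply Fact \ref{full-extensions} once to make it full, then apply Lemma \ref{reduced-extensions} once to make it reduced. But fullness is \emph{not} preserved under passing to a $\lesstrong$-extension: Definition \ref{full-definition} requires that for every type $p \in \gS(M_i)$ some singleton $a_k$ in the block realises a non-forking extension of $p$, and when the reduced extension replaces $M_i$ by a strictly larger $M_i'$, new types over $M_i'$ appear (in particular types that fork over $M_i$) which the fixed singletons $a_k$ cannot realise, since by the tower ordering their types over the primed models all do not fork over the unprimed ones. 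So the reduced tower you end up with is not known to be full, and you cannot invoke Fact \ref{universal-and-limit-extensions-exist} to get that the cofinal subsequence $\langle N_{(s,0,0)} \rangle$ is $\lek^u$-increasing in the \emph{final} tower. (Your stated worry about $\underhlim$ versus $\lek^u$ is a red herring: a continuous $\lek^u$-increasing chain of length of cofinality $\geq \kappa$ is exactly what a $(\lambda,\geq\kappa)$-limit model requires. The real problem is that fullness lives on the wrong tower.)

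The paper's proof resolves this by interleaving: it builds a $\lesst$-increasing chain of towers of length $\kappa$, alternately taking full extensions (Fact \ref{full-extensions}) and reduced extensions (Lemma \ref{reduced-extensions}), and then takes the union. Since unions of high-cofinality chains of reduced towers are reduced (Fact \ref{unions-of-reduced-are-reduced}) and unions of full towers are full (Fact \ref{full-unions}), the union tower $\calt^\kappa$ is \emph{simultaneously} reduced and $I_0$-full, which is what the argument needs. A secondary bookkeeping point you gloss over: Lemma \ref{reduced-extensions} preserves the index, and the index $I \times \gamma$ with $\cof(\gamma)=\lambda$ coming out of Fact \ref{full-extensions} has no maximal element, so you cannot obtain a tower "indexed by $\gamma+1$" that way; the paper instead uses the index $(\kappa+1)\times\lambda\times\alpha$, whose point $(\kappa,0,0)$ serves as the top once reducedness (via Fact \ref{reduced-implies-high-continuity}) forces continuity there.
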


\begin{proof}
    We mimic the approach of the proof of Fact \ref{bemain} (that is, \cite[3.1]{bema}) to build a tower that is reduced and $I_0$-full (for the appropriate ordering $I_0$). This will ensure the final model is a $(\lambda, \kappa)$-limit over the base model. By `stitching' $M_0$ to the bottom of the tower, we can turn the top model into $M_1$ since $M_0 \underhlim M_1$ while fixing $b$.

    By Lemma \ref{mini-reduced-under-N}, there exists $M^*\in \Kkappalims$ where $M \lek M^* \underhlim M_1$, $b \in M^*$, and $\langle M, M^* \rangle ^\wedge \langle b \rangle$ is reduced.

    For $\beta \geq \omega$, let $I_\beta = (\kappa + 1) \times \lambda \times \beta$ with the usual lexicographic ordering. Let $I_0 = (\kappa + 1) \times \lambda \times \{0\}$.

    By induction on $j \leq \kappa$, we define a continuous increasing sequence of limit ordinals $\langle \alpha_j : j \leq \kappa\rangle$ and a $\lesst$-increasing sequence of towers $\calt^j = \langle M_i^j : i \in I_{\alpha_j} \rangle ^\wedge \langle a_i : i \in I_{\alpha_j}^- \rangle$ that satisfy the following:
    \begin{enumerate}
        \item $\alpha_0 = \omega$
        \item for all $j < \kappa$, $\calt^{(2j+1)}$ is $I_0$-full
        \item for all $j < \kappa$, $\calt^{(2j + 2)}$ is reduced
        \item $\calt^\kappa = \bigcup_{j < \kappa} \calt^{j}$
    \end{enumerate}

    We describe the construction now: for $j = 0$, $\alpha_0$ is given, and $\calt^0$ can be constructed as in Remark \ref{towers-exist}. For $j < \kappa$ limit, take $\alpha_j = \bigcup_{k < j} \alpha_k$ and take any $\calt^j$ indexed by $I_{\alpha_j}$ extending all of $\langle \calt^k : k < j \rangle$ by Fact \ref{tower-chain-extensions}. For $j = \kappa$, $\alpha_\kappa$ and $\calt^\kappa$ are described in the requirements.

    Given $\alpha_{2j}$ and $\calt^{2j}$, we define $\alpha_{2j+1}$, $\calt^{2j+1}$, $\alpha_{2j+2}$ and $\calt^{2j+2}$. Take $\alpha_{2j+1} = \alpha_{2j} + \lambda$. Take any $\bar{\calt}^{2j}$ indexed by $I_{2j}$ such that $\calt_{2j} \lesst \bar{\calt}^{2j}$. By Fact \ref{full-extensions}, there is a $I_0$-full tower $\calt^{2j+1}$ indexed by $I_{\alpha_{2j+1}}$ such that $\calt^{2j+1} \upharpoonright I_{2j} = \bar{\calt}^{2j}$. In particular, $\calt_{2j} \lesst \calt_{2j+1}$. Now let $\alpha_{2j+2} = \alpha_{2j + 1}$, and take $\calt^{2j+2}$ to be any reduced tower with $\calt^{2j + 1} \lesst \calt^{2j + 2}$ by Fact \ref{reduced-extensions}. This completes the construction.

    By our construction specifications, Fact \ref{unions-of-reduced-are-reduced}, and Fact \ref{full-unions}, $\calt^\kappa$ is a reduced and $I_0$-full tower. By $I_0$-fullness, $M_{(s, t+1, 0)}^\kappa$ realises all the types over $M_{(s, t, 0)}$ for all $s<\kappa$ and $t < \lambda$, so $\langle M_{(s, 0, 0)}^\kappa : s < \kappa\rangle$ is $\lek^u$-increasing by Fact \ref{universal-and-limit-extensions-exist}. Since $\calt^\kappa$ is reduced, it is continuous at $(\kappa, 0, 0)$ (which has cofinality $\kappa$ in $I_{\alpha_\kappa}$) by Fact \ref{reduced-implies-high-continuity}. Therefore $M_{(0, 0, 0)} \underhlim M_{(\kappa, 0, 0)}^\kappa$.

    Since $M_{(0, 0, 0)}^\kappa$ and $M^*$ are $(\lambda, \geq \kappa)$-limit models, there is an isomorphism $f:M^* \cong M_{(0, 0, 0)}^\kappa$ by Fact \ref{bemain}. Take some $\bar{M}_1 \in \Kkappalims$ and $\bar{f}:M_1 \cong \bar{M}_1$ extending $f$. As $\bar{M}_1$ and $M^\kappa_{(\kappa, 0, 0)}$ are both $(\lambda, \geq \kappa)$-limit models over $M^\kappa_{(0, 0, 0)}$, by Fact \ref{bemain} again there exists an isomorphism $h:\bar{M}_1 \cong M^\kappa_{(\kappa, 0, 0)}$ fixing $M^\kappa_{(0, 0, 0)}$. Then $g = h \circ \bar{f}:M_1 \cong M_{(\kappa, 0, 0)}^\kappa$ is an isomorphism extending $f$. 

    Take any $i_0 \notin I_{\alpha_\kappa}$, and define $I = \{i_0\} \cup (\kappa \times \lambda \times \alpha_\kappa) \cup \{(\kappa, 0, 0)\}$, with the ordering placing $i_0$ as the minimal element, and the rest ordered lexicographically (note this is order isomorphic to the $I$ specified in the Proposition statement, with $\alpha = \alpha_\kappa$). Take $i_1 = (\kappa, 0, 0)$. Define $\calt = \langle N_i : i \in I \rangle ^\wedge \langle c_i : i \in I^-\rangle$ where
    \begin{enumerate}
        \item $N_{i_0} = M_0$
        \item for $i \geq_I (0, 0, 0)$, $N_i = g^{-1}[M_i^\kappa]$
        \item $c_{i_0} = b$
        \item for $i \geq_I (0, 0, 0)$,  $c_i = g^{-1}(a_i)$
    \end{enumerate}

    Note in particular that $N_{i_1} = M_1$ and $N_{(0, 0, 0)} = M^*$. 
    
    We have that $\calt \upharpoonright [i_0, (0, 0, 0)]_I = \langle M_0, M^*\rangle ^\wedge \langle b\rangle$ is reduced and $\calt \upharpoonright [(0, 0, 0), i_1]_I$ (being the image of $\calt^\kappa \upharpoonright [(0, 0, 0), (\kappa, 0, 0)]_{I_{\alpha_\kappa}}$ under $g^{-1}$) is reduced. So by Lemma \ref{reduced-stitching}, $\calt$ is reduced, as desired.
\end{proof}

\subsection{Main result and applications}\label{main-result-and-applications}

Now we restate and prove the main theorem.

\maintheorem*

\begin{proof}
	First recall that Hypothesis \ref{general-hypothesis} implies that Hypothesis \ref{proof-hypothesis} holds for $\dnf \upharpoonright \Kkappalims$. So assume Hypothesis \ref{proof-hypothesis} instead.
	
    Extending $M_1$ and $M_2$ if necessary, we may assume without loss of generality that $M_0 \underhlim M_1$ and $M_0 \underhlim M_2$. By Proposition \ref{reduced-tower-between-limit-models}, there exists a reduced tower $\calt^1 = \langle M_i^1 : i \leq \alpha \rangle ^\wedge \langle b_i : i < \alpha\rangle$ for some limit $\alpha$ such that $M_0^1 = M_0$, $M_\alpha^1 = M_1$, and $a_1 = b_0$. Let $\calt^2 = \langle M_0, M_2\rangle^\wedge \langle a_2\rangle$, which is strongly $\underhlim$-increasing. By Proposition \ref{tower-ap}, there exist $\langle M_{i, j} : i \leq \alpha, j < 2 \rangle$ and $f:M_2 \cong M_{0, 1}$ fixing $M_0$ such that 
    \begin{enumerate}
        \item $M_{i, 0} = M_i^1$ for $i \leq \alpha$
        \item for all $i < \alpha$ and $j < 2$, $\gtp(b_i/M_{i, j}, M_{i+1, j})$ $\dnf$-does not fork over $M_{i, 0}$
        \item for all $i \leq \alpha$, $\gtp(f(a_2)/M_{i, 0}, M_{i, 1})$ $\dnf$-does not fork over $M_{0, 0}$
        \item for all $i<k \leq \alpha$ and $j < 2$, $M_{i, j} \cap M_{k, 0} = M_{i, 0}$
    \end{enumerate}
    
    Take $N = M_{\alpha, 1}$. Then $M_1 = M_{\alpha, 0} \lek N$ and $f:M_2 \rightarrow N$. By condition (2) with $i = 0$ and $j=1$,  we have $\gtp(a_1/f[M_2], N) = \gtp(b_0/M_{0, 1}, M_{1, 1})$ $\dnf$-does not fork over $M_{0, 0} = M_0$. By condition (3) with $i = \alpha$, $\gtp(f(a_2)/M_1, N) = \gtp(f(a_2)/M_{\alpha, 0}, M_{\alpha, 1})$ $\dnf$-does not fork over $M_{0, 0} = M_0$. And by condition (4) with $i = 0, k = \alpha, j = 1$, we have that $M_1 \cap f[M_2] = M_{\alpha, 0} \cap M_{0, 1} = M_{0, 0} = M_0$. Thus taking $f_1:M_1 \rightarrow N$ to be the identity embedding and $f_2 = f$, $f_1$ and $f_2$ are as desired.

    The moreover part follows immediately from the previous statement, by replacing $M_1$ and $M_2$ by extensions in $\Kkappalims$, and taking $a_1\in M_1$ and $a_2\in M_2$ arbitrarily.
\end{proof}

\cite{bema} presents a number of settings in which Hypothesis \ref{general-hypothesis} holds in \cite[3.13]{bema}. We present the immediate application of our result in these settings now.

\begin{definition}
	Let $\K$ be an AEC stable in $\lambda \geq \LS(\K)$. Suppose $M \lek N$ and $p \in \gS(N)$. We say:
	\begin{enumerate}
		\item ({\cite[I.4.2]{van06}}) \emph{$p$ does not $\lambda$-split over $M$} if for all $N_1, N_2 \in \K$ where $M \lek N_l \lek N$ for $l = 1, 2$, and all $f : N_1 \cong N_2$ fixing $M$, we have $f(p \upharpoonright N_1) = p \upharpoonright N_2$.
		\item ({\cite[3.8]{vas16}}) \emph{$p$ does not $\lambda$-fork over $M$} if there exists $M_0 \lek^u M$ where $p$ does not $\lambda$-split over $M_0$.
		\item \emph{$\lambda$-non-forking} is the independence relation $\dnf$ where $a \dnf_{M_0}^N M$ if and only if $\gtp(a/M, N)$ does not $\lambda$-fork over $M_0$.
	\end{enumerate}
	
	If $M, N \in \K_{\geq \lambda}$, $M \lek N$, and $q \in \gS(N)$, we say
	\begin{enumerate}
		\item ({\cite[4.2]{vas16}}) \emph{$q$ does not $(\geq \lambda)$-fork over $M$} if there exists $M_0 \in \K_\lambda$ where $M_0 \lek M$ and for all $N_0 \in \K_\lambda$ where $M_0 \lek N_0 \lek N$, we have that $q \upharpoonright N_0$ does not $\lambda$-fork over $M_0$.
		\item \emph{$(\geq\lambda)$-non-forking} is the independence relation $\dnf$ where $a \dnf_{M_0}^N M$ if and only if $\gtp(a/M, N)$ does not $(\geq \lambda)$-fork over $M_0$.
	\end{enumerate}
\end{definition}

\begin{corollary}\label{superstable-corollary}
    Let $\K$ be a $\lambda$-superstable $\lambda$-symmetric AEC for some $\lambda \geq \LS(\K)$. Then $\lambda$-non-forking restricted to $\lambda$-limit models satisfies disjoint non-forking amalgamation. Moreover, every $\lambda$-limit model is a disjoint amalgamation base.
\end{corollary}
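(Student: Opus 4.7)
The plan is to apply Theorem \ref{disjoint-nf-amalgamation} directly by verifying that Hypothesis \ref{general-hypothesis} is satisfied in the $\lambda$-superstable $\lambda$-symmetric setting, taking $\kappa = \aleph_0$, $\K' = \Kkappalims = K_{(\lambda, \geq \aleph_0)}$, and $\dnf$ equal to $\lambda$-non-forking restricted to $\K'$. The corollary should then follow immediately, with the moreover clause being just the ``in particular'' clause of Theorem \ref{disjoint-nf-amalgamation}.

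First I would recall that under $\lambda$-superstability (in the standard sense used in \cite{vas16}), the class $\K$ is stable in $\lambda$ and $\K_\lambda$ has AP, JEP, and NMM. Indeed, these are all part of or immediate consequences of the usual $\lambda$-superstability assumption; if one of JEP or NMM were missing, Remark \ref{remark-JEP-NMM-not-needed} allows us to drop these anyway. Then I would point to \cite[3.13]{bema}, where it is verified that in a $\lambda$-superstable $\lambda$-symmetric AEC, $\lambda$-non-forking restricted to $\Kkappalims$ with $\kappa = \aleph_0$ is an independence relation satisfying uniqueness, existence, non-forking amalgamation, $\Kkappalims$-universal continuity* in $\K_\lambda$, and $(\geq \aleph_0)$-local character. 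This is essentially where $\lambda$-symmetry is needed: it is what turns the ``weak'' uniqueness/extension properties that $\lambda$-non-forking always enjoys into the full uniqueness, extension, and non-forking amalgamation required by Hypothesis \ref{general-hypothesis}.

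Once these pieces are in place, Theorem \ref{disjoint-nf-amalgamation} applied to $\dnf$ equal to the restriction of $\lambda$-non-forking to $\Kkappalims$ gives that $\dnf \upharpoonright \Kkappalims$ satisfies disjoint non-forking amalgamation, which is the first assertion. The moreover clause (every $\lambda$-limit model is a disjoint amalgamation base in $\K_\lambda$) is verbatim the ``in particular'' clause of Theorem \ref{disjoint-nf-amalgamation}, noting that $\Kkappalims$ with $\kappa = \aleph_0$ is precisely the class of $\lambda$-limit models.

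The main obstacle here is not mathematical but bibliographic: one must ensure that the aggregated results cited through \cite[3.13]{bema} (which in turn draw on \cite{vas16} and related work) really do pin down all five properties of $\dnf$ under exactly the stated hypotheses. Once that is taken for granted, the corollary is a one-line deduction from Theorem \ref{disjoint-nf-amalgamation}.
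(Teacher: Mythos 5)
Your proposal is correct and matches the paper's own proof, which likewise cites \cite[3.13(2)]{bema} to get Hypothesis \ref{proof-hypothesis} for $\lambda$-non-forking on limit models with $\kappa = \aleph_0$ and then invokes Theorem \ref{disjoint-nf-amalgamation}. The extra discussion of why superstability and symmetry supply the needed properties is accurate but not needed beyond the citation.
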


\begin{proof}
    By \cite[3.13(2)]{bema}, $\lambda$-non-forking on limit models satisfies Hypothesis \ref{proof-hypothesis} with $\kappa = \aleph_0$. The result is immediate from here by Theorem \ref{disjoint-nf-amalgamation}.
\end{proof}

Corollary \ref{superstable-corollary} could in fact could be proven using methods from \cite[\textsection 5]{vas19}, but our other examples rely on our generality of allowing $\kappa > \aleph_0$. Some terms in Corollary \ref{lrv-corollary} are introduced more formally in \cite[2.29]{bema}.

\begin{corollary}\label{lrv-corollary}
    Suppose $\K$ is an AEC, and $\dnf$ is a stable independence relation on $\K$ in the sense of \cite[\textsection 3]{lrv} such that $\dnfb{}{}{}{}$ satisfies $(\geq \kappa)$-local character and universal continuity. Then for any stable $\lambda \geq \LS(\K)$, $\dnfb{}{}{}{}$ restricted to $\Kkappalims$ satisfies disjoint non-forking amalgamation. In particular, every $M \in \Kkappalims$ is an amalgamation base for any stable $\lambda \geq \LS(\K)$. 
\end{corollary}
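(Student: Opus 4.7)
The plan is to apply Theorem~\ref{disjoint-nf-amalgamation} directly, which reduces the corollary to verifying that Hypothesis~\ref{general-hypothesis} holds in the given setting. Much of this bookkeeping was already done in \cite[3.13]{bema}, which catalogues exactly the situations (including LRV-style stable independence relations) in which the hypothesis is satisfied, so at its core the proof is a citation to that fact followed by an invocation of Theorem~\ref{disjoint-nf-amalgamation}. This mirrors the structure of the proof of Corollary~\ref{superstable-corollary} just above.

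Concretely, I would verify the hypothesis in three parts, taking $\K' = \K_\lambda$. First, by definition a stable independence relation in the sense of \cite{lrv} satisfies uniqueness, extension, transitivity, and symmetry; existence is then either part of the definition or follows, and non-forking amalgamation is the standard symmetric-amalgamation argument using extension and symmetry (amalgamate $M_1, M_2$ over $M_0$ in some $N^*$, then use extension/symmetry to straighten out the types of $a_1, a_2$ so neither forks). Second, stability in $\lambda \geq \LS(\K)$ together with the presence of such a well-behaved $\dnf$ is enough to yield AP, JEP, and NMM in $\K_\lambda$ (either argued directly or cited from the corresponding item of \cite[3.13]{bema}). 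Third, $(\geq\kappa)$-local character is assumed, and $\Kkappalims$-universal continuity* in $\K_\lambda$ for the restriction of $\dnf$ follows from the assumed universal continuity on $\K$ (as in the discussion surrounding Facts~\ref{continuity-star-implies-high-continuity} and~\ref{continuity-implies-continuity-star}).

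With Hypothesis~\ref{general-hypothesis} in hand, Theorem~\ref{disjoint-nf-amalgamation} immediately gives that $\dnf \upharpoonright \Kkappalims$ satisfies disjoint non-forking amalgamation, and the ``in particular'' clause about disjoint amalgamation bases is then an immediate consequence of the corresponding clause of Theorem~\ref{disjoint-nf-amalgamation}. I do not anticipate any serious obstacle: the corollary is essentially a packaging of Theorem~\ref{disjoint-nf-amalgamation} in the LRV setting, with the real work either absorbed into the definition of a stable independence relation in the sense of \cite{lrv} or already recorded in \cite[3.13]{bema}.
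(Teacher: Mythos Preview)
Your approach is correct and matches the paper's: the proof there reads ``Immediate from \cite[3.31(a)]{bema} and Theorem~\ref{disjoint-nf-amalgamation},'' which is exactly your strategy of citing the relevant item in \cite{bema} to verify Hypothesis~\ref{general-hypothesis} and then invoking the main theorem. The only discrepancy is the precise reference number (the paper points to \cite[3.31(a)]{bema} rather than \cite[3.13]{bema} for the LRV case), but this is a bibliographic detail, not a difference in the argument.
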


\begin{proof}
    Immediate from \cite[3.31(a)]{bema} and Theorem \ref{disjoint-nf-amalgamation}.
\end{proof}

\begin{corollary}\label{vasey-saturated-corollary}
    Suppose $\K$ is an AEC stable in $\mu \geq \LS(\K)$ and $\lambda \geq \mu^+$, where $\K$ has AP, NMM, $\mu$-tameness, and $\K_\lambda$ has JEP. Suppose also that $\mu$-non-splitting satisfies universal continuity on $\K$.
    
    Let $\dnf$ be $(\geq \mu)$-non-forking restricted to $\K_{(\lambda, \geq \mu^+)}$. Suppose $\dnf$ satisfies non-forking amalgamation. Then $\dnf$ satisfies disjoint non-forking amalgamation. In particular, every $(\lambda, \geq \mu^+)$-limit model is a disjoint amalgamation base.
\end{corollary}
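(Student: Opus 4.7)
The plan is to reduce the corollary to a direct application of Theorem \ref{disjoint-nf-amalgamation} by verifying Hypothesis \ref{general-hypothesis} in the present setting with $\kappa = \mu^+$ and $\K' = \K_{(\lambda, \geq \mu^+)}$. Since Theorem \ref{disjoint-nf-amalgamation} yields disjoint non-forking amalgamation for $\dnf \upharpoonright \Kkappalims$, and here $\Kkappalims = \K_{(\lambda, \geq \mu^+)} = \K'$, verifying the hypothesis is exactly what is needed; the ``in particular'' clause then falls out of the last line of Theorem \ref{disjoint-nf-amalgamation}.

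The structural conditions on $\K_\lambda$ are routine to check: stability of $\K$ in $\lambda$ follows from stability in $\mu$ together with $\mu$-tameness via the standard stability transfer theorems; AP and NMM descend from $\K$ to $\K_\lambda$ (using stability in $\mu$ to locate $\K$-extensions of size exactly $\lambda$), and JEP in $\K_\lambda$ is assumed. For the independence-relation properties of $\dnf$ on $\K_{(\lambda, \geq \mu^+)}$, non-forking amalgamation is given by hypothesis; uniqueness follows from $\mu$-tameness combined with the standard uniqueness of $\mu$-non-splitting over $\lek^u$-extensions; $(\geq \mu^+)$-local character follows from the classical $\mu^+$-local character of $\mu$-non-splitting in a $\mu$-stable AEC (which, together with Remark \ref{existence-transitivity-remark}, also gives existence). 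The only remaining ingredient is $\Kkappalims$-universal continuity* of $\dnf$ in $\K_\lambda$, which should be extracted from the assumed universal continuity of $\mu$-non-splitting.

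Following the pattern of Corollaries \ref{superstable-corollary} and \ref{lrv-corollary}, I anticipate all of these verifications are already packaged into the appropriate clause of \cite[3.13]{bema} (the hypotheses on $\mu$-non-splitting here look tailored to match its input), so the actual proof should reduce to a one-line citation of that result together with Theorem \ref{disjoint-nf-amalgamation}. If the precise combination of hypotheses is not literally covered by \cite[3.13]{bema}, the main obstacle would be verifying universal continuity* directly. Given a $\underhlim$-increasing chain in $\Kkappalims$ whose union lies in $\K_\lambda$ together with a coherent family of types not $(\geq \mu)$-forking over the base, one would descend to chains of $\mu$-size submodels using $\mu^+$-local character, apply the assumed universal continuity of $\mu$-non-splitting at that level, and lift back via $\mu$-tameness and uniqueness to a continuous extending type over the $\lambda$-size union. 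This is the only step with genuine content, and it is a standard descent argument in this area.
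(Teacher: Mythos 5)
Your proposal is correct and matches the paper's argument: the proof is exactly a one-line reduction to Theorem \ref{disjoint-nf-amalgamation} with $\kappa = \mu^+$ via the relevant verification of Hypothesis \ref{general-hypothesis} in \cite{bema}. The only cosmetic difference is that the paper cites \cite[3.41]{bema} rather than \cite[3.13]{bema} for that verification, a possibility you explicitly anticipated.
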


\begin{proof}
	Immediate from \cite[3.41]{bema} and Theorem \ref{disjoint-nf-amalgamation} with $\kappa = \mu^+$.
\end{proof}

\subsection{Disjoint amalgamation from a weaker relation}\label{disjoint-amalgamation-from-a-weaker-relation}

In this section we describe how under weaker assumptions, we can use analogous machinery to get disjoint amalgamation in a broader class of strictly stable AECs, at the sacrifice of losing the non-forking of singletons. The proof is actually much shorter, though we will have to establish the new setting briefly.

More precisely, we assume the underlying framework of \cite{beard3}, which is identical to Hypothesis \ref{general-hypothesis}, but with the uniqueness, extension, and non-forking amalgamation assumptions replaced by their `weak' counterparts. First we introduce the weakened form of non-forking amalgamation, which is essentially the usual notion but where the non-forking is pushed down to a model $M_0$ which the base of the amalgamation $M$ is a particular kind of limit model over.

\begin{definition}\cite[Definition 2.23]{beard3}
	Let $\K$ be an AEC stable in $\lambda \geq \LS(\K)$, $\dnf$ an independence relation on an AC $\K'$ where $\Kkappalims \subseteq \K' \subseteq \K_\lambda$, and $\theta < \lambda^+$ a regular limit.
	
	We say $\dnf$ satisfies \emph{$(\lambda, \theta)$-weak non-forking amalgamation} if for every $M_0, M, M_1, M_2 \in \K'$ where $M$ is a $(\lambda, \gamma)$-limit model over $M_0$ and $M \lek M_l$ and $a_l \in M_l$ for $l = 1, 2$ such that $\gtp(a_l/M, M_l)$ $\dnf$-does not fork over $M_0$, then there exist $N \in \K'$ and $f_l : M_l \rightarrow N$ $\K$-embeddings fixing $M$ for $l = 1, 2$ such that $\gtp(f_l(a_l)/f_{3-l}[M_{3-l}, N)$ $\dnf$-does not fork over $M_0$, for $l = 1, 2$.
\end{definition}

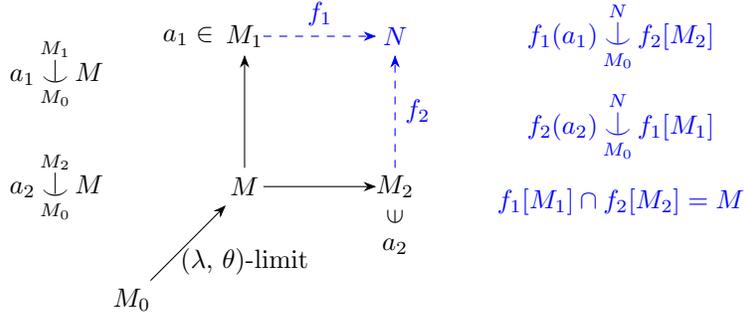
\begin{figure}[!ht]
\centering
\begin{circuitikz}
	\node [font=\normalsize] at (5.25,12) {$a_1 \in$};
	\node [font=\normalsize] at (8,9.2) {$a_2$};
	\node [font=\normalsize, rotate=90] at (8,9.6) {$\in$};
\node [font=\normalsize] at (3.5,11.5) {$a_1 \dnf_{M_0}^{M_1} M$};
\node [font=\normalsize] at (3.5,10) {$a_2 \dnf_{M_0}^{M_2} M$};
\node [font=\normalsize] at (4.5,8.5) {$M_0$};
\node [font=\normalsize] at (6,10) {$M$};
\draw [->, >=Stealth] (4.75,8.75) -- (5.75,9.75);
\draw [->, >=Stealth] (6,10.25) -- (6,11.75);
\draw [->, >=Stealth] (6.25,10) -- (7.75,10);
\node [font=\normalsize] at (8,10) {$M_2$};
\node [font=\normalsize] at (6,12) {$M_1$};
\node [font=\normalsize, color=blue] at (8.3,11) {$f_2$};
\node [font=\normalsize, color=blue] at (7,12.3) {$f_1$};
\draw [->, >=Stealth, dashed, color=blue] (6.25,12) -- (7.75,12);
\draw [->, >=Stealth, dashed, color=blue] (8,10.25) -- (8,11.75);
\node [font=\normalsize, color=blue] at (8,12) {$N$};
\node [font=\normalsize, color=blue] at (11,12) {$f_1(a_1) \dnf_{M_0}^N f_2[M_2]$};
\node [font=\normalsize, color=blue] at (11,10.8) {$f_2(a_2) \dnf_{M_0}^N f_1[M_1]$};
\node [font=\normalsize, color=blue] at (11,9.8) {$f_1[M_1] \cap f_2[M_2] = M$};
\node [font=\normalsize] at (6,9) {($\lambda$, $\theta$)-limit};
\end{circuitikz}

\caption{$(\lambda, \theta)$-weak non-forking amalgamation says that, given the solid black diagram, the dotted blue diagram exists.}
\label{diagram-weak-disjoint-non-forking-amalgamation}
\end{figure}

\begin{restatable}{theorem}{weak-theorem}\label{main-weak-theorem}
	Let $\K$ be an AEC stable in $\lambda$, where $\K_\lambda$ has AP. Let $\K'$ be an AC where $\Kkappalims \subseteq \K' \subseteq \K_\lambda$. Let $\dnf$ be an independence relation on $\K'$ satisfying weak uniqueness, weak existence, universal continuity* in $\K_\lambda$, $(\geq \kappa)$-local character, and $(\lambda, \theta)$-weak non-forking amalgamation in some regular $\theta \in [\kappa, \lambda^+)$.
	
	Then every $M \in \Kkappalims$ is a disjoint amalgamation base in $\K$.
\end{restatable}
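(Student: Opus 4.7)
The plan is to mirror the scheme of Theorem \ref{disjoint-nf-amalgamation} but replace ordinary towers by the \emph{weak towers} of \cite{beard3}, which are designed to interact with the weakened uniqueness, extension, and non-forking amalgamation hypotheses (the non-forking over $M_i$ required in Definition \ref{tower-ordering-definintion}(4) being relaxed to non-forking over some auxiliary $M_i^0 \underhlim M_i$). The overall architecture is: build a reduced weak tower $\calt^1$ with bottom model $M_0$ and top model $M_1$; amalgamate $\calt^1$ with the trivial two-model tower $\langle M_0, M_2\rangle$ into a matrix of models; then read off disjoint amalgamation from the reducedness of $\calt^1$. Since the non-forking conditions are dropped from the conclusion, we do not need to name or control the singletons $a_1, a_2$, so the argument simplifies in a few places.

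First I would invoke the weak-tower analogues of the structural facts in Section \ref{subsection-towers}, which are developed in \cite{beard3}: $\lesst$-extensions of $\lesst$-chains of weak towers exist; reduced weak-tower extensions exist; unions of $\lesst$-chains of reduced weak towers of high cofinality remain reduced; reduced weak towers are continuous at points of cofinality $\geq \kappa$; and $I_0$-full weak-tower extensions exist with high-cofinality unions remaining full. Under Hypothesis \ref{general-hypothesis} weakened as in the statement, each of these is the direct weak-tower counterpart of a fact we have used.

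Next, I would port Proposition \ref{reduced-tower-between-limit-models} to weak towers. Extending $M_1$ if necessary, assume $M_0 \underhlim M_1$; then by an alternating-construction argument along $\kappa$-many steps (alternating between the $I_0$-full and reduced extension facts above, and taking unions at limits), one produces a reduced $I_0$-full weak tower whose top model is a $(\lambda, \kappa)$-limit model over the bottom. Applying Fact \ref{bemain} to identify this top model with $M_1$, and stitching $M_0$ on as the bottom using (the weak-tower analogue of) Lemma \ref{reduced-stitching} together with the elementary weak-tower Lemma \ref{mini-reduced-under-N}, yields a reduced weak tower $\calt^1 = \langle M_i^1 : i \leq \alpha\rangle^\wedge \langle a_i : i < \alpha\rangle$ with $M_0^1 = M_0$ and $M_\alpha^1 = M_1$.

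Then I would carry out the $\beta = 2$ case of a weak-tower analogue of Proposition \ref{tower-ap}, applied to $\calt^1$ and $\calt^2 = \langle M_0, M_2\rangle$. At the successor (and only) column step, the base $M_{i,0}$ is a $(\lambda, \theta)$-limit model over the appropriate smaller $M_{i, 0}^0$, the singleton $a_i$ gives a non-forking type over $M_{i, 0}$ over that smaller base (from the $\lesst$-condition), and $(\lambda, \theta)$-weak non-forking amalgamation applies to produce the new matrix entry; weak-tower $\lesst$-extensions are assembled along $i$ as in the proof of Proposition \ref{tower-ap}. The output is a matrix $\langle M_{i, j} : i \leq \alpha,\ j < 2\rangle$ together with an isomorphism $f : M_2 \cong M_{0,1}$ fixing $M_0$, and a $\lesst$-extension $\calt^1 \lesst \calt^{(1)}$ where $\calt^{(1)}$ is the second column. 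Taking $N = M_{\alpha, 1}$, $f_1$ the inclusion $M_1 \hookrightarrow N$, and $f_2 = (\text{incl}) \circ f$, reducedness of $\calt^1$ gives
\[
f_1[M_1] \cap f_2[M_2] = M_{\alpha, 0} \cap M_{0, 1} = M_{0, 0} = M_0,
\]
witnessing that $M_0$ is a disjoint amalgamation base in $\K_\lambda$.

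The main obstacle is the weak-tower bookkeeping itself: proving reduced weak-tower extensions exist, that they union correctly at high cofinality, and — most delicately — adapting the $I_0$-full construction (Fact \ref{full-extensions}) under only weak uniqueness/weak extension, which forces one to track auxiliary models $M^0 \underhlim M$ throughout. This technical content lives in \cite{beard3}; once it is available, the argument is essentially formally identical to the proof of Theorem \ref{disjoint-nf-amalgamation}, with the simplification that non-forking control over $f(a_2)$ is not required.
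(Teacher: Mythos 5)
Your core mechanism is the right one --- a reduced chain of weak towers forces the minimal intersection, and universality lets you place the given models --- and your alternating full/reduced construction of the chain is essentially the paper's Lemma \ref{lemma-building-reduced-tower-over-a-limit-model}. But the architecture you wrap around it imports two pieces of the strong-hypothesis proof that are not justified under weak uniqueness/extension. First, the ``stitching'': the weak-tower analogue of Lemma \ref{mini-reduced-under-N} rests on Lemma \ref{reduced-with-M-on-bottom}, whose final step applies \emph{full} uniqueness to two types that agree only on the non-forking base $M_0$ itself rather than on a universal extension of it --- exactly what weak uniqueness withholds --- and it also invokes transitivity, which is only available in a weak form here. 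Moreover, in the weak setting ``reduced'' is a property of brilliant \emph{chains}, not of individual towers, so a reduced two-model tower with $M_0$ on the bottom is not even the right-shaped object to glue on. Second, the weak-tower analogue of Proposition \ref{tower-ap}: for reducedness to apply to the second column you must have $\calt^1 \lesst^w \calt^{(1)}$, which forces you to preserve the non-forking of every $a_i$ over $M_{i,0}$ into the new column; your successor step then needs, uniformly in $i$, an auxiliary base over which $M_{i,0}$ is a $(\lambda,\theta)$-limit and over which $\gtp(a_i/M_{i,0}, M_{i+2,0})$ does not fork. Producing these bases is precisely the obstruction the paper describes after Question \ref{weak-question}; asserting that the amalgamation lemma ``ports over'' is the gap.

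Both difficulties evaporate once you stop insisting that the tower have $M_0$ at the bottom and $M_1$ at the top. The paper builds a \emph{single} reduced brilliant chain whose final tower $\langle M_i : i \le \alpha\rangle$ has $M_\alpha$ a $(\lambda, \ge \kappa)$-limit over $M_0$, appends one universal tower $\langle M_i' : i \le \alpha\rangle$ by Fact \ref{weak-fact}(\ref{weak-fact-extend-any-chain-by-universal}), and reads off $M_\alpha \cap M_0' = M_0$ from reducedness once and for all. Given an arbitrary instance $N \lek N_1, N_2$ with $N \in \Kkappalims$ and $N_1, N_2 \in \K_\lambda$, it maps $N$ isomorphically onto $M_0$ by Fact \ref{weak-fact}(\ref{weak-fact-long-limits-are-isomorphic}), embeds $N_1$ into $M_\alpha$ and $N_2$ into $M_0'$ over that isomorphism using universality, and concludes via Lemma \ref{disjoint-ap-non-fixing-def}. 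No stitching, no identification of the top with $M_1$, and no tower amalgamation are needed; your second and third stages should be replaced by this transport step.
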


For the rest of this subsection, assume the following, which is identical to \cite[Hypothesis 3.3]{beard3}:

\begin{hypothesis}\label{proof-weak-hypothesis}
	Let $\K$ be an AEC stable in $\lambda$, where $\K_\lambda$ has AP. Let $\dnf$ be an independence relation on $\Kkappalims$ satisfying weak uniqueness, weak existence, universal continuity* in $\K_\lambda$, $(\geq \kappa)$-local character, and $(\lambda, \theta)$-weak non-forking amalgamation in some regular $\theta \in [\kappa, \lambda^+)$.
\end{hypothesis}

Similar to before, we can build a theory of towers in this more general setting, though we will call them \emph{weak towers} to distinguish them from our other notion of tower from Defintion \ref{tower-definitions}. This theory is described in detail in \cite{beard3}, but we give an abridged overview here.

\begin{notation}
	Given a well order $I$, define the well order $I^{-2}$ as follows:
	\begin{enumerate}
		\item if $\operatorname{otp}(I)$ is $0$ or limit, $I^{-2} = I$
		\item if $\operatorname{otp}(I) = \gamma + 1$ for some limit $\gamma$ and $i_1$ is the final element of $I$, $I^{-2} = I \setminus \{i_1\}$ (with the restriction of the ordering of $I$).
		\item if $\operatorname{otp}(I) = \alpha + 2$ and $i_1$, $i_2$ are the last two elements of $I$, $I^{-2} = I \setminus \{i_1, i_2\}$ (with the restriction of the ordering of $I$).
	\end{enumerate}
	That is, $I^{-2}$ is $I$ with the final two elements removed, if they exist.
\end{notation}

We briefly recall the important notions surrounding weak towers.

\begin{definition}\label{weak towers}
	Assume Hypothesis \ref{proof-weak-hypothesis}. 
	
	A \emph{weak tower} is a sequence $\calt = \langle M_i : i \in I \rangle ^\wedge \langle a_i : i \in I^{-2} \rangle$ where $\langle M_i : i \in I \rangle$ is a $\lek$-increasing sequence in $\Kkappalims$, and $a_i \in M_{i+2}$ for all $i \in I^{-2}$.
	
	A weak tower is universal if and only if the sequence of models is $\lek^u$ increasing.
	
	Define the \emph{weak tower ordering} $\lesst^w$ on weak towers $\calt = \langle M_i : i \in I \rangle ^\wedge \langle a_i : i \in I^{-2} \rangle$ and $\calt' = \langle M_i' : i \in I' \rangle ^\wedge \langle a_i' : i \in (I')^{-2} \rangle$ by $\calt \lesst^w \calt'$ if and only if
	
	\begin{enumerate}
		\item $I \subseteq I'$
		\item for all $i \in I$, $M_i \lek^u M_i'$
		\item for all $i \in I^{-2}$, $a_i = a_i'$
		\item for all $i \in I^-$, $\gtp(a_i/M_i', M_{i+_I2}')$ $\dnf$-does not fork over $M_i$.
		\item For all $i \in I^-$, $i+_I1 = i+_{I'}1$.
	\end{enumerate}

	Given a $\lesst$-increasing sequence $\langle \calt^j : j < \alpha \rangle$ for limit $\alpha < \lambda^+$ where $\cof(\alpha) \geq \kappa$ and $\calt^j = \langle M_i^j : j \in I^j \rangle ^\wedge \langle a_i : i \in (I^j)^{-2} \rangle$, if $I^\alpha = \bigcup_{j < \alpha} I^j$ is a well ordering, \emph{union} of $\langle \calt^j : j < \alpha \rangle$ is the tower $\bigcup_{j < \alpha} \calt^j = \langle M_i^\alpha : j \in I^\alpha \rangle ^\wedge \langle a_i : i \in (I^\alpha)^{-2} \rangle$ where $M^\alpha_i = \bigcup_{j < \alpha} M^j_i$. This is a well defined weak tower.
	
	A \emph{chain of weak towers} is a $\lesst^w$-increasing sequence of weak towers $\langle \calt^j : j < \alpha \rangle$ with $\alpha \in [1, \lambda^+)$. The chain is \emph{brilliant} if for all $j < \alpha$, at least one of the following holds:
	\begin{enumerate}
		\item $\calt^j$ is universal
		\item $\cof(j) \geq \kappa$ and $\calt^j = \bigcup_{j' < j} \calt^{j'}$
	\end{enumerate} 
	
\end{definition}

\begin{remark}
	We have to consider chains as a whole rather than only individual towers because unlike $\lesst$, $\lesst^w$ is only known to be transitive on universal towers. By working with brilliant chains of weak towers, we avoid the issues that arise from this.
\end{remark}

The notion of a full weak tower is similar to before, but we have to consider reduced \emph{chains} of weak towers rather than single weak towers for technical reasons.

\begin{definition}
	Let $I_0 \subseteq I$ be well orderings. A weak tower $\calt = \langle M_i : i \in I \rangle ^\wedge \langle a_i : i \in I^{-2} \rangle$ is $I_0$-full if for all $i \in I_0$ and all $p \in \gS(M_i)$, there exists $k \in [i, i+_{I_0}1)_I$ and $M' \underhlim M_i$ such that $\gtp(a_k/M_{i+_I1}, M_{i+_I2})$ $\dnf$-does not fork over $M'$ and extends $p$.
\end{definition}

\begin{definition}
	Let $\calc = \langle \calt^j : j \leq \alpha\rangle$ be a brilliant chain of weak towers, where $\calt^\alpha = \langle M_i : i \in I \rangle ^\wedge \langle a_i : i \in I^{-2} \rangle$. We say $\calc$ is \emph{reduced} if for every $\calt^{\alpha+1} = \langle M_i' : i \in I \rangle ^\wedge \langle a_i : i \in I^{-2} \rangle$ such that $\langle \calt^j : j \leq \alpha + 1\rangle$ is brilliant, for all $r < s \in I$ we have $M_s \cap M_r' = M_r$.
\end{definition}

Sometimes we will only want to consider the sequence of an initial segment of towers. We formalise this below.

\begin{definition}
	Let $\calc = \langle \calt^j : j \leq \alpha\rangle$ be a reduced chain of weak towers, where $\calt^j = \langle M_i^j : i \in I^j \rangle ^\wedge \langle a_i : i \in (I^j)^{-2} \rangle$ for $j \leq \alpha$. Suppose $I_0$ is an initial segment of $I^\alpha$. Then $\calc \upharpoonright_* I_0 = \langle \calt^j \upharpoonright I_0 : j \leq \alpha \rangle$, where $\calt^j \upharpoonright I_0 =\langle M_i^j : i \in I^j \cap I_0 \rangle ^\wedge \langle a_i : i \in (I^j \cap I_0)^{-2} \rangle$.
\end{definition}

We use $\calc\upharpoonright_* I_0$ rather than $\calc \upharpoonright I_0$ since the latter could be misinterpreted as taking an initial segment of the sequence of towers, rather than the full sequence of initial segments of towers.

We summarise the important theory of weak towers below. This closely mirrors the theory of towers introduced in Subsection \ref{subsection-towers}, though often towers are replaced by brilliant chains of weak towers.

\begin{fact}\label{weak-fact}
	\begin{enumerate}
		\item\label{weak-fact-extend-any-chain-by-universal} \cite[Corollary 3.36]{beard3} If $\langle \calt^j : j < \alpha\rangle$ is a brilliant chain of towers where $\calt^j$ is indexed by $I^j$ and $I = \bigcup_{j<\gamma} I^j$ is a well order, then there exists $\calt^\alpha$ a universal tower indexed by $I$ such that $\langle \calt^j : j \leq \gamma\rangle$ is a brilliant chain of towers.
		
		\item\label{weak-fact-long-unions-can-be-appended} \cite[Lemma 3.31]{beard3} If $\langle \calt^j : j < \gamma\rangle$ is a brilliant chain of towers where $\calt^j$ is indexed by $I^j$, $\cof(\gamma) \geq \kappa$, and $\bigcup_{j<\gamma} I^j$ is a well order, then $\langle \calt^j : j \leq \gamma\rangle$ is a brilliant chain of towers.
		
		\item\label{weak-fact-can-extend-any-chain-to-reduced} \cite[Lemma 3.46]{beard3} For every brilliant chain of towers $\calc = \langle \calt^j : j \leq \alpha\rangle$ where $\calt^\alpha$ is indexed by $I$, there is a chain of towers $\hat{\calc} = \langle\hat{\calt}^j : j \leq \beta \rangle$ all indexed by $I$ such that $\calc^\wedge \hat{\calc}$ (the concatenation of the two chains) is brilliant and reduced.
		
		\item\label{weak-fact-unions-of-reduced-chains-are-reduced} \cite[Lemma 3.48]{beard3} Suppose $\langle\alpha_k : k < \gamma \rangle$ is an increasing sequence in $\lambda^+$. Let $\alpha = \bigcup_{k < \gamma} \alpha_k$. Suppose $\langle \calt^j : j < \alpha \rangle$ is a brilliant chain of towers such that $\langle \calt^j : j < \alpha_k \rangle$ is reduced for $k < \gamma$. Suppose $I^j$ indexes $\calt^j$, and that $\bigcup_{j < \alpha} I^j$ is a well order. Let $\calt^\alpha = \bigcup_{j < \alpha} \calt^j$. Then $\langle \calt^j : j \leq \alpha \rangle$ is a reduced chain of towers.
		
		\item\label{weak-fact-star-initial-segments-of-reduced-chains-are-reduced} \cite[Lemma 3.50]{beard3} If $\calc = \langle \calt^j : j \leq \alpha \rangle$ is a reduced sequence of towers where $\calt^\alpha$ is indexed by $I$, and $I_0$ is an initial segment of $I$, then $\calc \upharpoonright_* I_0$ is reduced.
		
		\item\label{weak-fact-final-towers-of-reduced-chains-are-continuous-at-high-cofinalities} \cite[Lemma 3.52]{beard3} If $\langle \calt^j : j \leq \alpha \rangle$ is brilliant and reduced where $\calt^\alpha = \langle M_i : i \in I \rangle ^\wedge \langle a_i : i \in I^{-2} \rangle$ and $\delta \in I$ with $\cof_I(\delta) \geq \kappa$, then $\bigcup_{i < \delta} M_i = M_\delta$.
		
		\item\label{weak-fact-full-extensions-exist} \cite[Lemma 3.44]{beard3} If $\alpha < \gamma < \lambda^+$ are limit ordinals with $\cof(\gamma) = \lambda$, $I$ is a well order, and $\calt$ is a universal weak tower indexed by $I \times \alpha$, then there is a universal and $(I \times \{0\})$-full weak tower $\calt'$ indexed by $I \times \gamma$ where $\calt \lesst \calt'$.
		
		\item\label{weak-fact-full-unions-are-full} \cite[Lemma 3.42]{beard3} Suppose $\langle\alpha_k : k < \gamma \rangle$ is an increasing sequence in $\lambda^+$ where $\cof(\gamma) \geq \kappa$. Let $\alpha = \bigcup_{k < \gamma} \alpha_k$. Suppose $\langle \calt^j: j < \alpha \rangle$ is a brilliant chain of towers where $\calt^j$ is indexed by $I^j$ for $j < \lambda^+$, and $\bigcup_{j < \alpha} I^j$ is a well order. Suppose $I_0 \subseteq I^0$. If $\calt^{\alpha_k}$ is $I_0$-full for all $k < \gamma$, then $\bigcup_{j < \gamma} \calt^j$ is $I_0$-full.
		
		\item\label{weak-fact-long-limits-are-isomorphic} \cite[Theorem 3.1]{beard3} Suppose $M, N_1, N_2 \in \K$ and $N_1, N_2$ are $(\lambda, \geq \kappa)$-limit models over $M$. Then $N_1 \underset{M}{\cong} N_2$.
		
		Moreover, if $N_1, N_2 \in \Kkappalims$, then $N_1 \cong N_2$ (even if they are not limits over the same base).
	\end{enumerate}
\end{fact}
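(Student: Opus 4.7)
The plan is to adapt the overall strategy of Theorem \ref{disjoint-nf-amalgamation} to the weak-tower setting of \cite{beard3}, taking advantage of the fact that we aim only for disjointness of the amalgamation and not any non-forking conclusion on singletons. I will construct a reduced brilliant chain of weak towers whose final tower $\calt^\alpha$ has $M_0$ at its minimum index and $M_1$ at its maximum index, then extend this chain by one further weak tower whose bottom model contains an image of $M_2$. The reduced property, applied to this one-step extension, then forces $M_1 \cap f_2[M_2] \subseteq N_{i_1} \cap N_{i_0}' = M_0$, which is the desired disjointness.

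After extending $M_1, M_2$ to $(\lambda, \geq \kappa)$-limit models over $M_0$ using $\lambda$-stability (harmless for establishing a disjoint amalgamation base), the central construction is a weak-tower analogue of Proposition \ref{reduced-tower-between-limit-models}: produce a reduced brilliant chain $\calc = \langle \calt^j : j \leq \alpha \rangle$ with $\calt^\alpha = \langle N_i : i \in I \rangle ^\wedge \langle a_i : i \in I^{-2} \rangle$ on a carefully chosen well order $I$ having minimum $i_0$ and maximum $i_1$, where $N_{i_0} = M_0$ and $N_{i_1} = M_1$. The construction closely mimics that of Proposition \ref{reduced-tower-between-limit-models}, alternating reduced extensions and full extensions of the growing chain via the relevant items of Fact \ref{weak-fact}, and using the long-union stability of reducedness and of fullness at high-cofinality limits. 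Fullness combined with continuity of reduced chains at high cofinalities (Fact \ref{weak-fact}) forces the top model of the constructed chain to be a $(\lambda, \geq \kappa)$-limit over its bottom, so by the weak-tower isomorphism statement in Fact \ref{weak-fact} I push the whole chain forward by an isomorphism to identify the top model with $M_1$; a miniature reduced tower inside $M_1$ in the spirit of Lemma \ref{mini-reduced-under-N}, together with a stitching step analogous to Lemma \ref{reduced-stitching}, then grafts $M_0$ onto the bottom.

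Next I extend the reduced chain $\calc$ by one further weak tower $\calt^{\alpha+1} = \langle N_i' : i \in I \rangle ^\wedge \langle a_i : i \in I^{-2} \rangle$ carrying a $\K$-embedding $g : M_2 \to N_{i_0}'$ that fixes $M_0$. Starting from any AP-amalgamation of $M_2$ with $N_{i_0} = M_0$, I take successive $\lek^u$-extensions up the index $I$, amalgamating with each $N_i$ in turn, to produce a universal weak tower $\lesst^w$-above $\calt^\alpha$; universality of $\calt^{\alpha+1}$ keeps the chain $\langle \calt^j : j \leq \alpha + 1 \rangle$ brilliant. Condition (4) on the carried-along singletons $a_i$ is arranged by invoking $(\lambda, \theta)$-weak non-forking amalgamation together with weak extension and weak uniqueness stepwise along the index $I$. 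Setting $N = N_{i_1}'$, $f_1 : M_1 \hookrightarrow N$ the inclusion (valid since $M_1 = N_{i_1} \lek N_{i_1}'$), and $f_2 = g$, the reduced property of $\calc$ applied to the extension $\calt^{\alpha+1}$ gives $M_1 \cap g[M_2] \subseteq N_{i_1} \cap N_{i_0}' = N_{i_0} = M_0$, and the reverse inclusion is trivial. The principal obstacle is Stage~1: executing the weak-tower analogue of Proposition \ref{reduced-tower-between-limit-models} is delicate even though the ingredients are all present in Fact \ref{weak-fact}, because one must juggle brilliant chains rather than individual towers, contend with the more subtle fullness definition for weak towers (singletons indexed over $I^{-2}$), and verify that the isomorphism push-forward and stitching preserve both brilliance and reducedness. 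A secondary difficulty in Stage~2 is maintaining condition (4) of $\lesst^w$ while simultaneously forcing $g[M_2]$ into $N_{i_0}'$, which requires careful coordination of weak non-forking amalgamation along the index $I$.
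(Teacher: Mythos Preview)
Your proposal does not address the stated target. Fact \ref{weak-fact} is a list of nine results imported from \cite{beard3}; the paper gives no proof of any of them, and none is expected here. What you have written is instead a proof sketch for Theorem \ref{main-weak-theorem}. So as a proof of the statement in question, there is a complete mismatch: you are proving the wrong thing.

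If I read your proposal charitably as an attempt at Theorem \ref{main-weak-theorem}, it is more elaborate than what the paper actually does, and some of that elaboration creates genuine difficulties. The paper's argument does \emph{not} arrange $N_{i_0} = M_0$ and $N_{i_1} = M_1$. Instead, Lemma \ref{lemma-building-reduced-tower-over-a-limit-model} builds a reduced brilliant chain whose final tower has some bottom model $M_0^{\beta_\kappa}$ and some top model which is a $(\lambda, \geq \kappa)$-limit over it; then one extends by a single universal weak tower $\calt^{\gamma+1}$, obtains $M_\alpha \cap M_0' = M_0$ from reducedness, and only \emph{afterward} uses the isomorphism $f : N \cong M_0$ (from Fact \ref{weak-fact}(\ref{weak-fact-long-limits-are-isomorphic})) together with universality of $M_0 \lek^u M_\alpha$ and $M_0 \lek^u M_0'$ to embed the given $N_1, N_2$ into this fixed picture. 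This sidesteps the stitching (Lemma \ref{reduced-stitching}), the mini-reduced-tower step (Lemma \ref{mini-reduced-under-N}), and the isomorphism push-forward of the entire chain that you outline in Stage 1. Your Stage 2 also does more than needed: the paper simply takes any universal $\lesst^w$-extension via Fact \ref{weak-fact}(\ref{weak-fact-extend-any-chain-by-universal}) and then embeds $M_2$ into its bottom by universality, without any stepwise weak non-forking amalgamation along $I$. The ``principal obstacle'' you identify (verifying that push-forward and stitching preserve brilliance and reducedness of \emph{chains}) is real and is not obviously resolvable with the tools listed in Fact \ref{weak-fact}; the paper avoids it entirely.
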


Now we move towards proving Theorem \ref{main-weak-theorem}. The method is a simplified version of the approach used to prove Theorem \ref{disjoint-nf-amalgamation}: we will build a reduced chain with final weak tower $\calt = \langle N_i : i \leq \alpha \rangle ^\wedge \langle a_i : i < \alpha \rangle$ where $N_\alpha$ is a $(\lambda, \geq \kappa)$-limit model over the $N_0$, and take any universal weak tower $\calt' = \langle N_i' : i \leq \alpha \rangle ^\wedge \langle a_i : i < \alpha \rangle$ $\lesstrong^w$-extending the chain. The reduced condition gives that $N_\alpha \cap N_0' = N_0$. Then using uniqueness of limit models, we may `replace' $N_0$, $N_\alpha$ and $N_0'$ by $M_0$, $M_1$, and $M_2$ respectively to get the desired disjoint amalgamation.

First, we build the desired reduced chain.

\begin{lemma}\label{lemma-building-reduced-tower-over-a-limit-model}
	There exist $\alpha, \gamma < \lambda^+$ limit and a reduced chain $\langle \calt^j : j \leq \gamma \rangle$ with $\calt^\gamma = \langle M_i : i \leq \alpha \rangle$ such that $M_\alpha$ is a $(\lambda, \geq \kappa)$-limit model over $M_0$.
\end{lemma}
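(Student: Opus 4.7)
The plan is to imitate the proof of Proposition \ref{reduced-tower-between-limit-models} but in the weak-tower setting, alternating between extending to full chains and to reduced chains, and then taking a union at a high cofinality. The key insight, as in that proof, is that fullness on a regular sub-index forces $\lek^u$-increase on the corresponding subsequence of models, and reducedness then forces continuity at high cofinalities (Fact \ref{weak-fact}(\ref{weak-fact-final-towers-of-reduced-chains-are-continuous-at-high-cofinalities})), so that the top model is a $(\lambda, \geq \kappa)$-limit over the bottom.

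Concretely, I would choose an index of the form $I_\beta = (\kappa + 1) \times \lambda \times \beta$ (ordered lexicographically), setting $I_0^* = (\kappa+1) \times \lambda \times \{0\}$ for the ``fullness witnesses''. By induction on $j \leq \kappa$ I would construct a non-decreasing sequence of limit ordinals $\langle \alpha_j : j \leq \kappa\rangle$ and a brilliant chain $\langle \calt^j : j \leq \kappa\rangle$ where $\calt^j$ is indexed by $I_{\alpha_j}$, such that: (i) $\alpha_0 = \omega$ and $\calt^0$ is any universal weak tower on $I_{\omega}$; (ii) at even successor stages $j = 2k+2$ I invoke Fact \ref{weak-fact}(\ref{weak-fact-can-extend-any-chain-to-reduced}) to append finitely many towers (keeping the same index) making the chain up to that point reduced; (iii) at odd successor stages $j = 2k+1$ I invoke Fact \ref{weak-fact}(\ref{weak-fact-full-extensions-exist}) to extend to an $I_0^*$-full universal weak tower on $I_{\alpha_{2k+1}}$ with $\alpha_{2k+1} = \alpha_{2k} + \lambda$ (and cofinality $\lambda$, as required by that Fact); (iv) at limit stages $j < \kappa$ I take $\alpha_j = \bigcup_{k<j} \alpha_k$ and invoke Fact \ref{weak-fact}(\ref{weak-fact-extend-any-chain-by-universal}) to extend by a universal weak tower indexed by $I_{\alpha_j}$, keeping the chain brilliant; (v) at $j = \kappa$, set $\alpha_\kappa = \bigcup_{k<\kappa} \alpha_k$ and take the union $\calt^\kappa = \bigcup_{k<\kappa} \calt^k$, which is a weak tower by Fact \ref{weak-fact}(\ref{weak-fact-long-unions-can-be-appended}) since $\cof(\kappa) = \kappa \geq \kappa$.

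The resulting chain $\langle \calt^j : j \leq \kappa\rangle$ is brilliant (by construction and Fact \ref{weak-fact}(\ref{weak-fact-long-unions-can-be-appended})), reduced (by Fact \ref{weak-fact}(\ref{weak-fact-unions-of-reduced-chains-are-reduced}) applied to the cofinal sequence of reduced initial segments), and $\calt^\kappa$ is $I_0^*$-full (by Fact \ref{weak-fact}(\ref{weak-fact-full-unions-are-full})). Writing $\calt^\kappa = \langle M_i : i \in I_{\alpha_\kappa} \rangle \,{}^\wedge\, \langle a_i : i \in I_{\alpha_\kappa}^{-2} \rangle$, $I_0^*$-fullness plus weak existence guarantees that $M_{(s, t+1, 0)}$ realises enough types over $M_{(s,t,0)}$ to witness $M_{(s,t,0)} \lek^u M_{(s,t+1,0)}$ at successor steps of the second coordinate (via Fact \ref{universal-and-limit-extensions-exist}), so the subsequence $\langle M_{(s, 0, 0)} : s \leq \kappa\rangle$ is $\lek^u$-increasing along the first coordinate. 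Reducedness, via Fact \ref{weak-fact}(\ref{weak-fact-final-towers-of-reduced-chains-are-continuous-at-high-cofinalities}), gives continuity of $\calt^\kappa$ at $(\kappa, 0, 0)$ (which has cofinality $\kappa$ in $I_{\alpha_\kappa}$), so $M_{(\kappa,0,0)} = \bigcup_{s < \kappa} M_{(s,0,0)}$. Then $M_{(\kappa,0,0)}$ is a $(\lambda, \kappa)$-limit over $M_{(0,0,0)}$. Restricting the chain to the initial segment $I = [(0,0,0), (\kappa, 0, 0)]$ via Fact \ref{weak-fact}(\ref{weak-fact-star-initial-segments-of-reduced-chains-are-continuous-at-high-cofinalities}) gives the desired reduced chain (after reindexing this interval as $[0, \alpha]$ for the appropriate limit $\alpha$, with $\gamma = \kappa$).

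The main obstacle I anticipate is bookkeeping rather than conceptual: ensuring brilliance is preserved through the alternating construction (in particular that the reduced-extension step at $2k+2$ does not break brilliance, and that the full-extension step at $2k+1$ produces a universal tower compatible with the brilliance requirement), and verifying that the $I_0^*$-fullness condition of Definition \ref{weak towers} — which is stated in terms of singletons two indices ahead rather than one — still produces the universality of the $(s, 0, 0)$-subsequence when combined with weak existence. This latter point is where one must be careful in the translation of the original argument, since the weak-tower indexing asymmetry ($I^{-2}$ vs.\ $I^-$) means fullness witnesses land in $M_{(s, t+1, 0)}$ rather than at the immediate successor, but the essential structure of the argument in Proposition \ref{reduced-tower-between-limit-models} and \cite[Theorem 3.1]{beard3} nevertheless carries through.
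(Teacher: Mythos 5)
Your proposal is correct and follows essentially the same route as the paper: alternate full and reduced extensions along the index $(\kappa+1)\times\lambda\times\beta$, take the union at cofinality $\kappa$, extract universality of the $\langle M_{(s,0,0)} : s \le \kappa\rangle$ subsequence from fullness and continuity at $(\kappa,0,0)$ from reducedness, then restrict to the initial segment up to $(\kappa,0,0)$. The one point you underestimate is that the reduced-extension step (\cite[Lemma 3.46]{beard3}) appends a chain of weak towers of arbitrary ordinal length, not finitely many, so the paper indexes the chain by an auxiliary continuous sequence $\langle \beta_k : k \le \kappa\rangle$ rather than by $j \le \kappa$ directly --- but this is precisely the bookkeeping you flagged, and it does not affect the argument.
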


\begin{proof}
	The method is similar to Lemma \ref{building-a-reduced-tower-between-m0-m1}, but since we now must consider reduced chains rather than reduced towers, we must keep track of the chain built along the way (also similar to the final argument of \cite[Theorem 3.1]{beard3}). Note by relabelling the underlying well orders of the towers, it is enough to prove there exist a well order $I$ with initial element $i_0$ and final element $i_1$ where $i_1$ is limit, $\gamma < \lambda^+$ a limit ordinal, and a reduced chain $\langle \calt^j : j \leq \gamma \rangle$ with $\calt^\gamma = \langle M_i : i \in I \rangle$ such that $M_{i_1}$ is a $(\lambda, \geq \kappa)$-limit model over $M_{i_0}$.
	
	For $\beta \geq \omega$, let $I_\beta = (\kappa + 1) \times \lambda \times \delta$, and let $I_0 = (\kappa + 1) \times \lambda \times \{0\}$. By induction on $k \leq \kappa$, build continuous sequences of ordinals $\langle \alpha_k : k \leq \kappa \rangle$ and $\langle \beta_k : k \leq \kappa \rangle$, and a $\lesst^w$-increasing sequence of weak towers $\langle \calt^j : j \leq \beta_\kappa \rangle$ such that
	
	\begin{enumerate}
		\item For all $k \leq \kappa$ and $j \in [\beta_k, \beta_{k+1})$ (interpreting $\beta_{\kappa + 1}$ as $\beta_{\kappa}+1$), $\calt^j = \langle M_i^j : i \in I_{\alpha_k}\rangle^\wedge\langle a_i^j : i \in I_{\alpha_k}^-\rangle$
		\item $\alpha_0 = \omega$
		\item for all $k < \kappa$, $\beta_{k+1}$ is a successor and $\langle \calt^j : j < \beta_{k+1}\rangle$ is a reduced chain
		\item for all $k < \kappa$, $\calt^{\beta_{k+1}}$ is $(\kappa +1) \times \lambda \times \{0\}$-full
		\item $\calt^{\beta_\kappa} = \bigcup_{j < \beta_\kappa} \calt^j$
	\end{enumerate}
	
	This is possible via the following construction: suppose $k < \kappa$ and you have constructed $\alpha_l$, $\beta_{l}$, and $\langle \calt^j : j \leq \beta_l\rangle$ for all $l < k$.
	
	If $k = 0$, let $\alpha_0 = \omega$, $\beta_0 = 0$, and $\calt^0$ be any weak tower of index $I_\omega$. 
	
	If $k<\kappa$ is limit, take $\alpha_k = \bigcup_{l < k} \alpha_l$, $\beta_k = \bigcup_{l<k} \beta_l$, and $\calt^{\beta_k}$ any weak tower indexed by $I_{\beta_k}$ such that $\langle \calt^j : j \leq \beta^k\rangle$ is a brilliant chain of weak towers. 
	
	If $k$ is successor, note that $k = k^- + 1$. By Fact \ref{weak-fact}(\ref{weak-fact-can-extend-any-chain-to-reduced}) there exists a successor $\beta_k \in (\beta_{k^-}, \lambda^+)$ and weak towers $\calt^j$ for $j \in (\beta_{k^-}, \beta_k)$ indexed by $I_{\alpha_{k^-}}$ such that $\langle \calt^j : j < \beta_k \rangle$ is a reduced chain of weak towers. Take $\alpha_k = \alpha_{k^-} + \lambda$. By Fact \ref{weak-fact}(\ref{weak-fact-full-extensions-exist}), there is a $I_0$-full weak tower $\calt^{\beta_k}$ such that $\langle \calt^j : j \leq \beta_k\rangle$ is a brilliant chain of weak towers.
	
	If $k = \kappa$, let $\alpha_\kappa = \bigcup_{l < \kappa} \alpha_l$, $\beta_\kappa = \bigcup_{l<\kappa} \beta_l$, and $\calt^{\beta_\kappa} = \bigcup_{j < \beta_\kappa} \calt^j$. The conditions are preserved as $\cof(\kappa) \geq \kappa$. This completes the construction.
	
	Now we show this is enough. Note that conditions (3) and (6) of our construction guarantee that $\langle \calt^j : j \leq \beta_\kappa \rangle$ is reduced by Fact \ref{weak-fact}(\ref{weak-fact-unions-of-reduced-chains-are-reduced}). By conditions (4) and (6) of the construction, $\calt^{\beta_\kappa}$ is $I_0$-full.
	
	First note that as $\langle \calt^j : j \leq \beta_\kappa \rangle$ is reduced, and $\cof_{I_{\beta_\kappa}}((\kappa, 0, 0)) = \kappa$, by Fact \ref{weak-fact}(\ref{weak-fact-final-towers-of-reduced-chains-are-continuous-at-high-cofinalities}) $\calt^{\beta_\kappa}$ is continuous at $(\kappa, 0, 0)$, meaning $M^{\beta_\kappa}_{(\kappa, 0, 0)} = \bigcup_{i <_{I^{\beta_\kappa}} (\kappa, 0, 0)} M^{\beta_\kappa}_i$. Since $\calt^{\beta_\kappa}$ is $I_0$-full, for all $r < \kappa$ and $l < \lambda$, $M^{\beta_\kappa}_{(r, l+1, 0)}$ realises all types over $M^{\beta_\kappa}_{(r, l, 0)}$. Therefore by Fact \ref{universal-and-limit-extensions-exist}, for all $r < \kappa$, $M^{\beta_\kappa}_{(r+1, 0, 0)}$ is universal over $M^{\beta_\kappa}_{r, 0, 0)}$. So $M^{\beta_\kappa}_{(\kappa, 0, 0)} = \bigcup_{i <_{I_{\beta_\kappa}} (\kappa, 0, 0)} M^{\beta_\kappa}_i = \bigcup_{r < \kappa } M^{\beta_\kappa}_{(r, 0, 0)}$ is a $(\lambda, \kappa)$-limit model over $M^{\beta_\kappa}_{(0, 0, 0)}$. So taking $\hat{I} = [(0, 0, 0), (\kappa, 0, 0)]_{ I_{\beta_\kappa}}$ (that is, the initial segment of $I_{\beta_\kappa}$ up to and including $(\kappa, 0, 0)$), we have that $\langle \calt^j : j \leq \beta_\kappa \rangle \upharpoonright_* \hat{I}$ is a reduced chain by Fact \ref{weak-fact}(\ref{weak-fact-star-initial-segments-of-reduced-chains-are-reduced}). Thus replacing $I$ with $\hat{I}$, $\gamma$ with $\beta_\kappa$, $\langle \calt^j : j \leq \beta_\kappa \rangle$ with $\langle \calt^j : j \leq \beta_\kappa \rangle \upharpoonright_* \hat{I}$, $i_0$ with $(0, 0, 0)$, and $i_1$ with $(\kappa, 0, 0)$, we are done.
\end{proof}

\begin{proof}[Proof of Theorem \ref{main-weak-theorem}]
	
	First note that we can assume Hypothesis \ref{proof-weak-hypothesis} by restricting $\dnf$ to $\Kkappalims$ if necessary. We begin by building a single witness of disjoint amalgamation, which we will later use to prove the general case.
	
	By Lemma \ref{lemma-building-reduced-tower-over-a-limit-model}, there exist $\alpha, \gamma < \lambda^+$ limit and a reduced chain $\langle \calt^j : j \leq \gamma \rangle$ with $\calt^\gamma = \langle M_i : i \leq \alpha \rangle$ such that $M_\alpha$ is a $(\lambda, \geq \kappa)$-limit model over $M_0$. Take any universal weak tower $\calt^{\alpha+1} = \langle M_i' : i \leq \alpha \rangle$ such that $\langle \calt^j : j \leq \gamma+1 \rangle$ is brilliant by Fact \ref{weak-fact}(\ref{weak-fact-extend-any-chain-by-universal}). Since $\langle \calt^j : j \leq \gamma \rangle$ is reduced, $M_\alpha \cap M_0' = M_0$. So we have $M_0 \underhlim M_\alpha \lek M_\alpha'$ and $M_0 \lek^u M_0' \lek M_\alpha'$.
	
	Now we show every $N \in \Kkappalims$ is an amalgamation base in $\K$. Suppose $N_l \in \K_\lambda$ and $N \lek N_l$ for $l = 1, 2$. By Fact \ref{weak-fact}(\ref{weak-fact-long-limits-are-isomorphic}), there is an isomorphism $f : N \cong M_0$. Since $\calt^{\gamma} \lesst^w \calt^{\gamma+1}$, we have that $M_0 \lek^u M_\alpha$, so $f$ extends to an embedding $f_1 : N_1 \rightarrow M_\alpha$. Since $M_0 \lek^u M_0'$, there is an embedding $f_2 : N_2 \rightarrow M_0'$ extending $f$. As $M_\alpha, M_0' \lek M_\alpha'$, we may consider $f_1, f_2$ as embeddings into $N$.
	
	Since $M_0 \subseteq f_1[N_1] \subseteq M_\alpha$, $M_0 \subseteq f_2[N_2] \subseteq M_0'$, and $M_\alpha \cap M_0' = M_0$, we have $f_1[N_1] \cap f_2[N_2] = M_0 = f[N]$. By Lemma \ref{disjoint-ap-non-fixing-def}, this is enough.

\end{proof}

We have one immediate application of Theorem \ref{main-weak-theorem} that does not satisfy the hypotheses of Theorem \ref{disjoint-nf-amalgamation}.

\begin{corollary}
	Suppose $\K$ is an AEC stable in $\lambda \geq \LS(\K)$, that $\kappa < \lambda^+$ is regular, that $\theta \in [\kappa, \lambda^+)$ is regular. Suppose $\K_\lambda$ has AP, JEP, and NMM, and also that $\lambda$-non-splitting in $\K_\lambda$ satisfies continuity, $(\geq \kappa)$-local character, and $(\lambda, \theta)$-symmetry (see e.g. \cite[Definition 2.23]{beard3}).
	
	Then $\Kkappalims$ has disjoint non-forking amalgamation in $\K$.
\end{corollary}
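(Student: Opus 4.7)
The plan is to follow the three-stage strategy sketched in the introduction: build a reduced tower from $M_0$ up to $M_1$ whose first named singleton is $a_1$, amalgamate it with the short tower $\langle M_0, M_2\rangle^\wedge\langle a_2\rangle$, and read off all three requirements (the two non-forking conditions and the disjointness) from the resulting matrix of models. First I would reduce to Hypothesis \ref{proof-hypothesis} by restricting $\dnf$ to $\Kkappalims$, and then replace $M_l$ (for $l = 1, 2$) by a $(\lambda, \geq \kappa)$-limit model over $M_0$ that contains it, so we may assume $M_0 \underhlim M_l$. This reduction is harmless: if we can disjointly non-forking amalgamate the enlarged models, the restrictions of the embeddings still disjointly amalgamate $M_1$ and $M_2$ (intersections can only shrink), and the non-forking conditions restrict by monotonicity.

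Next I would invoke Proposition \ref{reduced-tower-between-limit-models} with $M_0 \underhlim M_1$ and $b = a_1$ to obtain a reduced tower $\calt^1 = \langle M_i^1 : i \leq \alpha\rangle^\wedge\langle b_i : i < \alpha\rangle$ with $M_0^1 = M_0$, $M_\alpha^1 = M_1$, and $b_0 = a_1$ (after relabelling the index to an ordinal with the limit $\alpha$ as final element). Set $\calt^2 = \langle M_0, M_2\rangle^\wedge \langle a_2\rangle$; this is strongly $\underhlim$-increasing by our reduction above. Then I would apply tower amalgamation, Proposition \ref{tower-ap}, with $\calt = \calt^1$ and $\calt' = \calt^2$ (so $\beta = 2$) to produce a matrix $\langle M_{i,j} : i \leq \alpha, j < 2\rangle$ together with $f : M_2 \cong M_{0, 1}$ fixing $M_0$, where $M_{i, 0} = M_i^1$, and the non-forking conditions (4) and (5) of that proposition hold. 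Since $\calt^1$ is reduced, the ``furthermore'' part of Proposition \ref{tower-ap} also yields $M_{i,1} \cap M_{k,0} = M_{i,0}$ for all $i < k \leq \alpha$.

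Finally I would extract the witnesses: take $N := M_{\alpha, 1}$, let $f_1 : M_1 \to N$ be the inclusion (since $M_1 = M_{\alpha, 0} \lek M_{\alpha, 1}$), and set $f_2 := f : M_2 \to M_{0, 1} \lek N$. Both fix $M_0$. For the non-forking of $a_1$: condition (5) of Proposition \ref{tower-ap} with $j = 0$, $i = 0$ gives that $\gtp(b_0/M_{0,1}, M_{1,1})$ does not fork over $M_{0,0} = M_0$; monotonicity (letting $N_1 = M_{1,1}$ grow to $N$) then gives $\gtp(a_1/f_2[M_2], N)$ does not fork over $M_0$. For the non-forking of $f_2(a_2)$: condition (4) with $i = \alpha - 1$ is insufficient, but condition (5) iterated is exactly the statement (3) in the author's proof — applied at $i = \alpha$ it gives $\gtp(f(a_2)/M_{\alpha, 0}, M_{\alpha, 1})$ does not fork over $M_0$, which is precisely $\gtp(f_2(a_2)/f_1[M_1], N)$ not forking over $M_0$. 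Disjointness is immediate from the reduced condition at $i = 0$, $k = \alpha$, $j = 1$: $f_1[M_1] \cap f_2[M_2] = M_{\alpha, 0} \cap M_{0, 1} = M_{0, 0} = M_0$.

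The main obstacle is Step 2 — producing the reduced tower with prescribed base, top, and first singleton — which is exactly the content of Subsection \ref{building-a-reduced-tower-between-m0-m1}; the rest of the proof, once Proposition \ref{reduced-tower-between-limit-models} and Proposition \ref{tower-ap} are in hand, is essentially an exercise in bookkeeping indices. A secondary point worth being careful about is ensuring the non-forking of $f(a_2)$ propagates from the type over $M_{0, 0}$ (where we set it up via Lemma \ref{successor-step-for-tower-ap} inside the proof of tower amalgamation) to the type over $M_{\alpha, 0} = M_1$; this is automatic because condition (5) of Proposition \ref{tower-ap} is stated for \emph{all} $i < \alpha$, so no separate transitivity invocation is needed.
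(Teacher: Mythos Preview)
Your write-up is essentially the proof of Theorem~\ref{disjoint-nf-amalgamation}, not of this corollary. The corollary sits in Subsection~\ref{disjoint-amalgamation-from-a-weaker-relation}, where the independence relation is $\lambda$-non-splitting, and the hypotheses only give \emph{weak} uniqueness, \emph{weak} extension, and $(\lambda,\theta)$-\emph{weak} non-forking amalgamation. You cannot ``reduce to Hypothesis~\ref{proof-hypothesis}'' here: $\lambda$-non-splitting is not known to satisfy full uniqueness or full non-forking amalgamation, so Propositions~\ref{reduced-tower-between-limit-models} and~\ref{tower-ap} are unavailable --- both are proved under Hypothesis~\ref{proof-hypothesis} and rely on full uniqueness (via transitivity) and full non-forking amalgamation (via Fact~\ref{tower-extensions-with-b} and Lemma~\ref{successor-step-for-tower-ap}). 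Indeed, the paper explicitly says after this corollary that the argument does \emph{not} capture the non-forking of the singletons in this setting, and Question~\ref{weak-question} asks whether even the weak form of disjoint non-forking amalgamation holds.

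The paper's actual proof is two lines: cite \cite[Corollary~4.1]{beard3} to check that $\lambda$-non-splitting satisfies Hypothesis~\ref{proof-weak-hypothesis}, then apply Theorem~\ref{main-weak-theorem}. That theorem uses the weak-tower machinery of Subsection~\ref{disjoint-amalgamation-from-a-weaker-relation} (reduced \emph{chains} of weak towers rather than reduced towers), and its conclusion is only that every $M\in\Kkappalims$ is a disjoint amalgamation base --- the ``non-forking'' in the corollary's conclusion appears to be a slip, as the surrounding discussion confirms. So the gap is not in your bookkeeping but in the framework: you have invoked tools that require stronger hypotheses on $\dnf$ than $\lambda$-non-splitting is assumed to have.
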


\begin{proof}
	By the method of \cite[Corollary 4.1]{beard3}, $\lambda$-non-splitting satisfies Hypothesis \ref{proof-hypothesis} (which is identical to \cite[Hypothesis 3.3]{beard3}), and therefore Hypothesis \ref{general-hypothesis}. The result follows from Theorem \ref{main-weak-theorem}.
\end{proof}

As stated earlier, this result is stronger that Theorem \ref{disjoint-nf-amalgamation} in the sense that our relation has weaker assumptions, but weaker than Theorem \ref{disjoint-nf-amalgamation} in that we have only shown $(\lambda, \geq \kappa)$-limit models are disjoint amalgamation bases, without capturing any information about the non-forking properties of singletons. The following seems the most natural version of disjoint non-forking amalgamation in this `weaker' setting:

\begin{definition}
	Assume Hypothesis \ref{proof-weak-hypothesis}. We say $\dnf$ satisfies \emph{weak disjoint non-forking amalgamation} if for all $M_0, M, M_1, M_2 \in \Kkappalims$ such that $M$ is a $(\lambda, \geq \kappa)$-limit model over $M_0$, $M \lek M_l$ and $a_l \in M_l$ such that $\gtp(a_l/M, M_l)$ $\dnf$-does not fork over $M_0$, then there exist $N \in \Kkappalims$ and $f_l : M_l \rightarrow N$ fixing $M$ such that $\gtp(f_l(a_l)/M_{3-l}, N)$ $\dnf$-does not fork over $M_0$ for $l = 1, 2$, and $f_1[M_1] \cap f_2[M_2] = M$.
\end{definition}

\begin{remark}
	Note this is essentially $(\lambda, \theta)$-weak disjoint amalgamation, with the additional disjointness conclusion. Under Hypothesis \ref{proof-weak-hypothesis}, in light of Theorem \ref{weak-fact}(\ref{weak-fact-long-limits-are-isomorphic}), the definition would be equivalent if we added a $\theta$ parameter where $\cof(\theta) \geq \kappa$ and instead required $M_0$ be a $(\lambda, \theta)$-limit model over $M_*$.
\end{remark}

\begin{remark}
	Note given an independence relation with uniqueness and existence, this is a weaker property than disjoint non-forking amalgamation: monotonicity gives that the types $\dnf$-do not fork over $M_0$, and after using disjoint non-forking amalgamation, the conclusion follows from transitivity.
\end{remark}

This leads to the following natural conjecture:

\begin{question}\label{weak-question}
	Assume Hypothesis \ref{proof-weak-hypothesis}. Does $\dnf$ satisfy weak disjoint non-forking amalgamation?
\end{question}
	
We found that the proof of Theorem \ref{disjoint-nf-amalgamation} breaks down at certain points when using weak towers in place of towers and have thus been unable to answer Question \ref{weak-question}. 

Perhaps the most fundamental issue is that we would want the bottom the two towers being amalgamated to be equal to $M$ to get the correct intersection - but then we cannot capture the non-forking of the singletons over $M_0$ in the towers themselves. Weak transitivity seems insufficient to get around this complication, since the models in the reduced tower (the leftmost column) are not necessarily universal extensions of one another (in fact, since the tower is reduced, intuitively they will be very close to one another to guarantee the intersection property). With this in mind, perhaps a modified version of towers would be necessary to answer Question \ref{weak-question}.

\begin{remark}\label{remark-JEP-NMM-not-needed}
	In fact, both Theorem \ref{disjoint-nf-amalgamation} and Theorem \ref{main-weak-theorem} hold without the assumptions of JEP and NMM in $\K_\lambda$. If JEP fails, given $M \in \K_\lambda$, restrict to the AEC $\K_M$ of models which can be embedded into a model with $M$, as in \cite[Definition 3.12]{beard3}. All $M_1, M_2 \in \K$ with $M \lek M_1, M_2$ are in $\K_M$; the restriction of $\dnf$ to $\K_M$ retains its properties in $\K_M$; and $\K_M$ has JEP. So, we can prove the result in $\K_M$ instead. 
	
	Now we have JEP without loss of generality. Note that if NMM fails in $\K_\lambda$, then by JEP any $(\lambda, \geq \kappa)$-limit model is maximal. Thus all amalgamations are trivially disjoint, and the non-forking properties follow from existence. So we can assume NMM in $\K_\lambda$ holds without losing generality.
\end{remark}

\printbibliography

\end{document}